\documentstyle[11pt,amssymb,amsmath,amscd,amsbsy,amsfonts,amsthm]{article}

\input xy
\xyoption{all}

\pagestyle{plain}
\textwidth=17.5cm
\oddsidemargin=-1cm
\evensidemargin=-1cm
\topmargin=-1cm
\textheight=23cm

\newtheorem{thm}{Theorem}[section]
\newtheorem{lem}[thm]{Lemma}
\newtheorem{prop}[thm]{Proposition}
\newtheorem{cor}[thm]{Corollary}
\newtheorem{rem}[thm]{Remark}

\DeclareMathOperator{\Z}{\mathbb{Z}}

\title{\textsc{Subtle characteristic classes for $Spin$-torsors}}
\author{Fabio Tanania}
\date{}

\begin{document}
	
	\maketitle
	
		\begin{abstract}
		Extending \cite{SV}, we obtain a complete description of the motivic cohomology with $\Z/2$-coefficients of the Nisnevich classifying space of the spin group $Spin_n$ associated to the standard split quadratic form. This provides us with very simple relations among subtle Stiefel-Whitney classes in the motivic cohomology of \v{C}ech simplicial schemes associated to quadratic forms from $I^3$, which are closely related to $Spin_n$-torsors over the point. These relations come from the action of the motivic Steenrod algebra on the second subtle Stiefel-Whitney class. Moreover, exploiting the relation between $Spin_7$ and $G_2$, we describe completely the motivic cohomology ring of the Nisnevich classifying space of $G_2$. The result in topology was obtained by Quillen in \cite{Q}.
	\end{abstract}

	\section{Introduction}
	
	Our main purpose in this work consists in an attempt of better understanding $Spin$-torsors, which are closely related to quadratic forms from $I^3$. These are extremely interesting and fascinating objects and, although they arise quite naturally in many areas of mathematics, there are still many open questions about them due to their complexity and richness. In this paper, we try to study $Spin$-torsors from a motivic homotopic point of view by using classifying spaces and characteristic classes in motivic cohomology. At first, we need to mention that in the motivic homotopic environment there are two types of classifying spaces, the \textit{Nisnevich} and the \textit{\'etale}. The difference between the two is particularly visible when one works with non special algebraic groups. Indeed, in this case, the two types of classifying spaces above mentioned have in general different cohomology rings and, therefore, different characteristic classes. From \cite{MV}, we know that torsors are classified by \'etale classifying spaces, nevertheless studying Nisnevich classifying spaces has shown to provide some advantages in the project of investigating them.
	
	Actually, an essential inspiration for our work lies in \cite{SV}, where the authors study torsors by using Nisnevich classifying spaces. They are mainly interested in $BO_n$, the Nisnevich classifying space of the orthogonal group associated to the standard split quadratic form $q_n$, which provides a key tool to study $O_n$-torsors over the point which are nothing else but quadratic forms. In particular, they compute the motivic cohomology ring with $\Z/2$-coefficients of $BO_n$. This happens to be a polynomial algebra over the motivic cohomology of the point generated by some cohomology classes which are called \textit{subtle Stiefel-Whitney classes}. These are very informative invariants, for example they enable to recognise the power of the fundamental ideal of the Witt ring where a quadratic form belongs and they are also connected to the $J$-invariant introduced in \cite{V}. In a completely analogous way, it is possible to compute the motivic cohomology of $BSO_n$, which again is a polynomial algebra generated by all the subtle Stiefel-Whitney classes but the first, as one would expect from the classical topological result.
	
	In this work we go a bit further on this path by providing a complete description of the motivic cohomology with $\Z/2$-coefficients of $BSpin_n$, the Nisnevich classifying space of the spin group associated to the standard split form $q_n$. As we have already mentioned, this is a step forward in the understanding of $Spin$-torsors, and so of quadratic forms with trivial discriminant and Clifford invariant. In topology the singular cohomology of $BSpin_n$ was computed by Quillen in \cite{Q}. Essentially, his computation is based on two key tools: 1) the regularity of a certain sequence in the cohomology ring of $BSO_n$; 2) the Serre spectral sequence associated to the fibration $BSpin_n \rightarrow BSO_n$. Regarding 1), we essentially prove the regularity of a sequence in the motivic setting similar to Quillen's sequence in topology. This sequence is obtained from the second subtle Stiefel-Whitney class by acting with some specific Steenrod operations. As we will notice, the motivic situation is much more complicated than the topological one. This comes from the fact that in the motivic picture the element $\tau$ appears. Regarding 2), we use instead techniques developed in \cite{SV} to deal with fibrations of simplicial schemes with fibers which are motivically Tate, since in the motivic setting we lack a spectral sequence of Serre's type associated to a fibration. As a result, we get a description of the entire cohomology ring of $BSpin_n$ which is similar to the topological one in the same way as it is for the orthogonal and the special orthogonal cases. More precisely, we prove the following theorem (see Theorem \ref{MQ2}).
	
	\begin{thm}
		For any $n \geq 2$, there exists a cohomology class $v_{2^{k(n)}}$ of bidegree $(2^{k(n)-1})[2^{k(n)}]$ such that the natural homomorphism of $H$-algebras 
		$$H(BSO_n)/I_{k(n)} \otimes_H H[v_{2^{k(n)}}] \rightarrow H(BSpin_n)$$
		is an isomorphism, where $I_{k(n)}$ is the ideal generated by $\theta_0, \dots ,\theta_{k(n)-1}$ and $k(n)$ depends on $n$ as in the table of Theorem \ref{Q1}.
	\end{thm}

Equivalently, one can visualize $I_{k(n)}$ as the ideal generated by the action of the motivic Steenrod algebra on the second subtle Stiefel-Whitney class. This way we obtain subtle classes for $Spin$-torsors and relations among them. Moreover, by exploiting the relation between $Spin_7$ and the exceptional group $G_2$, we prove the following result that completely describes the motivic cohomology of $BG_2$ providing subtle characteristic classes for $G_2$-torsors, namely octonion algebras (see Theorem \ref{g2}).

\begin{thm}
	The motivic cohomology ring of $BG_2$ is completely described by
	$$H(BG_2) \cong H[u_4,u_6,u_7].$$
\end{thm}

Since torsors are classified by \'etale classifying spaces, much attention has been devoted to investigate their Chow rings (see \cite{To}), which neverthless are notoriously difficult to study. Regarding $Spin_n$, the picture is completely understood for $n \leq 6$ where the spin groups are known to be special by the sporadic isomorphisms. Guillot computed the Chow ring of the first non-trivial case, namely $Spin_7$, together with the one of $G_2$, over complex numbers in \cite{G}. Next, Molina obtained the description of the Chow ring of $Spin_8$ over complex numbers in \cite{MR}. On the other hand, Yagita computed in \cite{Y2} the whole motivic cohomology with $\Z/2$-coefficients for $Spin_7$ and $G_2$ and provided a bound for the Chow ring with $\Z/2$-coefficients of all $Spin_n$ over complex numbers in \cite{Y} by exploiting Quillen's computation of the singular cohomology of $BSpin_n$. In this paper we obtain a similar result by exploiting instead our computation of the motivic cohomology of the Nisnevich classifying space of $Spin_n$ which allows to relax the hypothesis on the base field and also suggests that understanding Nisnevich classifying spaces can possibly help in the study of the \'etale ones over more general fields.\\
	
	\textbf{Outline.} We now shortly summarise the content of each section of this text. In Sections $2$ and $3$ we give some notations that we follow throughout this paper and recall some preliminary results from \cite{Q} regarding the computation of the cohomology ring of $BSpin_n$ in topology. In Section $4$ we present some definitions and properties of the category of motives over a simplicial scheme which provide us with the main techniques essential to deal with fibrations of simplicial schemes with motivically Tate fibers. Section $5$ is devoted to Nisnevich classifying spaces, to show some of their features and, in particular, to recall subtle Stiefel-Whitney classes. In Section $6$ we construct a grid of long exact sequences involving the motivic cohomology of $BSpin_n$ and of $BSO_n$ which is our key tool, substituting the Serre spectral sequence, to get our main result. In Section $7$ we show some results about regular sequences in $H(BSO_n)$ obtained by acting with some Steenrod operations on the second subtle Stiefel-Whitney class, which allows us in Section $8$ to prove the main theorem, i.e. the computation of the motivic cohomology ring of $BSpin_n$. We see that, in general, this is not polynomial anymore in subtle Stiefel-Whitney classes, since many non trivial relations appear among them related to the action of the motivic Steenrod algebra on the second subtle Stiefel-Whitney class, and new subtle classes appear. Section $9$ is devoted to the computation of the motivic cohomology ring of $BG_2$. In Sections $10$ and $11$, using previous results, we find very simple relations among subtle classes in the motivic cohomology rings of \v{C}ech simplicial schemes associated to $Spin$-torsors and get some information about the Chern subring of the Chow ring with $\Z/2$-coefficients of the \'etale classifying space of $Spin_n$.\\
	
		\textbf{Acknowledgements.} I would like to thank my PhD supervisor Alexander Vishik for his support and many helpful advice that made this text possible. I also want to thank the referee for very useful remarks that helped to improve the exposition.
	
	\section{Notation}
	
	Let us start in this section by fixing some notations we will use throughout this paper.
	
	\begin{tabular}{c|c}
		$k$ & field of characteristic not $2$ containing $\sqrt{-1}$\\
		$Spc(k)$, $Spc_*(k)$ & category of motivic spaces over $k$, its pointed version\\
		${\mathcal H}_s(k)$, ${\mathcal H}_{s,*}(k)$ & simplicial homotopy category, its pointed version\\
		${\mathcal H}_{A^1}(k)$, ${\mathcal H}_{A^1,*}(k)$ & $A^1$-homotopy category of Morel-Voevodsky, its pointed version\\
		${\mathcal {DM}}^{-}_{eff}(k)$ & triangulated category of effective motives\\
		$T$ & unit Tate motive in ${\mathcal {DM}}^{-}_{eff}(k)$\\
		$H_{top}(-)$ & singular cohomology with $\Z/2$-coefficients\\
		$H(-)$ & motivic cohomology with $\Z/2$-coefficients\\
		$H$ & motivic cohomology with $\Z/2$-coefficients of $Spec(k)$\\
		$K^M(k)/2$ & Milnor K-theory of $k$ modulo $2$\\
		$w_i$ & $i$-th Stiefel-Whitney class in $H_{top}(BSO_n)$\\
		$u_i$ & $i$-th subtle Stiefel-Whitney class in $H(BSO_n)$\\
		$\rho_j$ & the element $Sq^{2^{j-1}}Sq^{2^{j-2}} \dots Sq^2Sq^1w_2$ in $H_{top}(BSO_n)$\\
		$\theta_j$ & the element $Sq^{2^{j-1}}Sq^{2^{j-2}} \dots Sq^2Sq^1u_2$ in $H(BSO_n)$\\
		$I^{top}_j$ & ideal in $H_{top}(BSO_n)$ generated by $\rho_0,\dots,\rho_{j-1}$\\
		$I_j$ & ideal in $H(BSO_n)$ generated by $\theta_0,\dots,\theta_{j-1}$\\
		$k(n)$ & $\max\{j:\rho_0,\dots,\rho_{j-1}$ is a regular sequence in $H_{top}(BSO_n)\}$ (see Theorem \ref{Q1})\\
		$h(n)$ & $\max\{j:\tau,\theta_0,\dots,\theta_{j-1}$ is a regular sequence in $H(BSO_n)\}$ (see Theorem \ref{seq})\\
		$v_{2^{k(n)}}$ & extra polynomial generator in $H(BSpin_n)$\\
	\end{tabular}\\
	
	It follows from results by Voevodsky (see \cite[Theorem 6.1, Corollary 6.9 and Corollary 7.5]{V3}) that $H \cong K^M(k)/2[\tau]$, where $\tau$ is the generator of $H^{0,1} \cong \Z/2$. At this point, recall from \cite[Lemma 11.1]{V2} and \cite[Theorem 1.1]{HKO} that the motivic Steenrod algebra is generated as a left $H$-module by the admissible monomials $Sq^{i_r} \dots Sq^{i_0}$ where $i_{j+1} \geq 2i_j \geq 0$. Each Steenrod square $Sq^i$ has bidegree $([i/2])[i]$, therefore $Sq^i(x)=0$ for $i>0$ and for any $x \in H^{n,n} \cong K_{n}^M(k)/2$, since $H$ is trivial above the diagonal. Moreover, since we are working over a field containing the square root of $-1$, we have that $Sq^1\tau=\rho=0$ where $\rho$ is the class of $-1$ in $K^M(k)/2$ and $Sq^i(\tau)=0$ for any $i \geq 2$ by \cite[Lemma 9.9]{V2}. It follows from this remark that, in our case, the only motivic cohomology operations that act non-trivially on $H$ are the multiplications by elements of $H$.
	
	\section{Preliminary results} 
	
	Our goal is to compute the motivic cohomology ring of the Nisnevich classifying space of $Spin_n$, the spin group of the standard split quadratic form $q_n$. In topology, the computation of the singular cohomology of $BSpin_n$ associated to the real euclidean quadratic form was achieved by Quillen in $\cite{Q}$.
	
	Before recalling his main results, let us define the elements $\rho_j$ in $H_{top}(BSO_n)\cong \Z/2[w_2,{\dots},w_n]$ inductively by the following formulas:
	$$\rho_0=w_2;$$
	$$\rho_{j+1}=Sq^{2^j}\rho_j.$$
	
	\begin{thm}\label{Q1}
		The sequence $\rho_0,\dots,\rho_{k(n)-1}$ is regular in $H_{top}(BSO_n)$, where $k(n)$ depends on $n$ as in the following table. 
		\begin{center}
			\begin{tabular}{|c|c|}
				\hline
				\textbf{n} &\textbf{k(n)}\\
				\hline
				8l+1 &4l\\
				\hline
				8l+2 &4l+1\\
				\hline
				8l+3 &4l+2\\
				\hline
				8l+4 &4l+2\\
				\hline
				8l+5 &4l+3\\ 
				\hline
				8l+6 &4l+3\\ 
				\hline
				8l+7 &4l+3\\ 
				\hline
				8l+8 &4l+3\\ 
				\hline
			\end{tabular}
		\end{center}
	\end{thm}
\begin{proof}
See \cite[Theorem 6.3]{Q}. 
\end{proof}
	
	Moreover, we recall that the values written in the previous table are related to the dimension of spin representations of $Spin_n$. More precisely, for any $n$ there is a spin representation $\Delta_n: Spin_n \rightarrow SO_{2^{k(n)}}$ that induces a map $B\Delta_n: BSpin_n \rightarrow BSO_{2^{k(n)}}$ on classifying spaces which, in turn, induces a homomorphism in cohomology $B\Delta_n^*: H_{top}(BSO_{2^{k(n)}})\rightarrow H_{top}(BSpin_n )$. We denote by $w_i(\Delta_n)$ the cohomology class $B\Delta_n^*(w_i)$ in $H_{top}(BSpin_n )$.
	
	\begin{thm}\label{Q2}
		 Let $I_{k(n)}^{top}$ be the ideal in $H_{top}(BSO_n)$ generated by the regular sequence from Theorem \ref{Q1}. Then, the canonical homomorphism 
		$$H_{top}(BSO_n)/I_{k(n)}^{top} \otimes \Z/2[w_{2^{k(n)}}(\Delta_n)] \rightarrow H_{top}(BSpin_n)$$ 
		is an isomorphism.
	\end{thm} 
\begin{proof}
	See \cite[Theorem 6.5]{Q}. 
\end{proof}
	
	\begin{rem}\label{rem1}
		\normalfont
	From Theorem \ref{Q1} and Theorem \ref{Q2} it follows that 
	$$k(n+1)=
	\begin{cases}
	k(n), & \rho_{k(n)} \in I_{k(n)}^{top} \\
	k(n)+1, & \rho_{k(n)} \notin I_{k(n)}^{top}
	\end{cases}
	$$
	where here by $I_{k(n)}^{top}$ we mean the ideal in $H_{top}(BSO_{n+1}) \cong \Z/2[w_2,\dots,w_{n+1}]$ generated by the elements $\rho_0,\dots,\rho_{k(n)-1}$.
	\end{rem}
	
	Furthermore, we notice that Theorem \ref{Q2} relies on the Serre spectral sequence for the fibration $B\Z/2 \rightarrow BSpin_n \rightarrow BSO_n$. In the motivic setting we do not have such a tool, so we use instead techniques developed by Smirnov and Vishik in $\cite{SV}$ which we recall in the following sections.
	
	\section{Motives over a simplicial base}
	
	The main purpose of this section is to recall some key definitions and results regarding the triangulated category of motives over a simplicial base, which is an essential tool for our computation. Before starting, we would like to mention that the contents of this section are essentially the same as Section 3 in \cite{T}. Here, there is only a further attention in the construction of all cofiber sequences at the level of motivic spaces first, which is needed for the compatibility with Steenrod operations. Moreover, there is the definition of Thom class and Corollary \ref{Thom3}, which were not present in \cite{T}.
	
	Let us fix a smooth simplicial scheme $Y_{\bullet}$ over $k$ and a commutative ring with identity $R$. Following \cite{V1}, we denote by $Sm/Y_{\bullet}$ the category in which objects are given by pairs $(U,j)$, with $j$ a non-negative integer and $U$ a smooth scheme over $Y_j$, and in which morphisms from $(U,j)$ to $(V,i)$ are given by pairs $(f,\theta)$, with $\theta:[i] \rightarrow [j]$ a simplicial map and $f:U \rightarrow V$ a morphism of schemes, such that the following diagram is commutative
	$$
	\xymatrix{
		U \ar@{->}[r]^{f} \ar@{->}[d] & V \ar@{->}[d]\\
		Y_j \ar@{->}[r]_{Y_{\theta}} & Y_i
	.}
	$$
	
	Moreover, as for spaces over the point, let us denote by $Spc(Y_{\bullet})=\Delta^{op}Shv_{Nis}(Sm/Y_{\bullet})$ the category of motivic spaces over $Y_{\bullet}$ and by $Spc_*(Y_{\bullet})$ its pointed counterpart, consisting of simplicial Nisnevich sheaves over $Sm/Y_{\bullet}$.
	
	For any morphism $f:U \rightarrow V$ in $Spc_*(Y_{\bullet})$ there is a cofiber sequence
	$$U \rightarrow V \rightarrow Cone(f) \rightarrow S^1 \wedge U$$
	where $Cone(f)$ is defined by the following push-out diagram in $Spc_*(Y_{\bullet})$
	$$
	\xymatrix{
		U \ar@{->}[r]^{f} \ar@{->}[d] & V \ar@{->}[d]\\
		U \wedge \Delta[1] \ar@{->}[r] & Cone(f).
	}
	$$
	
	In $\cite{V1}$ there is a construction of the category of motives over $Y_{\bullet}$ with $R$-coefficients. This category is denoted by ${\mathcal {DM}}_{eff}^-(Y_{\bullet},R)$. We notice that every cofiber sequence in $Spc_*(Y_{\bullet})$ induces a distinguished triangle in ${\mathcal {DM}}_{eff}^-(Y_{\bullet},R)$. Besides, attached to this category there is a sequence of restriction functors 
	$$r_i^*:{\mathcal {DM}}_{eff}^-(Y_{\bullet},R) \rightarrow {\mathcal {DM}}_{eff}^-(Y_i,R).$$
	
	The image of a motive $N \in {\mathcal {DM}}_{eff}^-(Y_{\bullet},R)$ under $r_i^*$ is simply denoted by $N_i$. Furthermore, we have the following adjunction for any morphism $p:Y_{\bullet} \rightarrow Y'_{\bullet}$ of smooth simplicial schemes
	\begin{align*}
		{\mathcal {DM}}_{eff}^-&(Y_{\bullet},R)\\
		Lp^* \uparrow & \downarrow Rp_*\\
		{\mathcal {DM}}_{eff}^-&(Y'_{\bullet},R).
	\end{align*}
	
	In the case that $p$ is smooth, together with the previous one, there is also the following adjunction
	\begin{align*}
		{\mathcal {DM}}_{eff}^-&(Y_{\bullet},R)\\
		Lp_{\#} \downarrow & \uparrow p^*\\
		{\mathcal {DM}}_{eff}^-&(Y'_{\bullet},R).
	\end{align*}

In particular, for any smooth simplicial scheme $Y_{\bullet}$ over $k$, we have a pair of adjoint functors
\begin{align*}
	{\mathcal DM}_{eff}^-&(Y_{\bullet},R)\\
	Lc_{\#} \downarrow & \uparrow c^*\\
	{\mathcal DM}_{eff}^-&(k,R)
\end{align*}
where $c:Y_{\bullet} \rightarrow Spec(k)$ is the projection to the base. Then, following \cite[Section 5]{V1}, one can define Tate objects $T(q)[p]$ in ${\mathcal DM}^-_{eff}(Y_{\bullet},R)$ as $c^*(T(q)[p])$. 
	
	At this point, we recall some facts about coherence taken from \cite{SV}. By a smooth coherent morphism we mean a smooth morphism $\pi:X_{\bullet} \rightarrow Y_{\bullet}$ such that there is a cartesian diagram
	$$
	\xymatrix{
		X_j \ar@{->}[r]^{\pi_j} \ar@{->}[d]_{X_{\theta}} & Y_j \ar@{->}[d]^{Y_{\theta}}\\
		X_i \ar@{->}[r]_{\pi_i} & Y_i
	}
	$$
	for any simplicial map $\theta:[i] \rightarrow [j]$. A motive $N$ in ${\mathcal {DM}}_{eff}^-(Y_{\bullet},R)$ is said to be coherent if all simplicial morphisms $\theta:[i] \rightarrow [j]$ induce structural isomorphisms $N_{\theta}:LY_{\theta}^*(N_i) \rightarrow N_j$. The full subcategory of ${\mathcal {DM}}_{eff}^-(Y_{\bullet},R)$ whose objects are coherent motives is denoted by ${\mathcal {DM}}_{coh}^-(Y_{\bullet},R)$. The fact that $LY_{\theta}^*$ is a triangulated functor implies that ${\mathcal {DM}}_{coh}^-(Y_{\bullet},R)$ is closed under taking cones and arbitrary direct sums. On the other hand, we have that $L\pi_{\#}$ maps coherent objects to coherent ones for any smooth coherent morphism $\pi$. Hence, $M(X_{\bullet} \xrightarrow{\pi} Y_{\bullet})$ is a coherent motive, where by $M(X_{\bullet} \xrightarrow{\pi} Y_{\bullet})$ we mean the image $L\pi_{\#}(T)$ of the unit Tate motive.
	
	In the following results, $CC(Y_{\bullet})$ indicates the simplicial set built up from a simplicial scheme $Y_{\bullet}$ by applying the functor $CC$ sending any connected scheme to the point and commuting with coproducts.
	
	The following proposition permits us to deal with fibrations of simplicial schemes with motivically Tate fibers.
	
	\begin{prop}\label{SV}
		Let $Y_{\bullet}$ be a simplicial scheme, $R$ be a commutative ring
		with identity, and suppose that the first singular cohomology group $H^1(CC(Y_{\bullet}),R^{\times})$ is trivial. Let $r,s$ be non-negative integers, and let $N \in {\mathcal {DM}}_{coh}^-(Y_{\bullet},R)$ be a motive such that $N_i \cong T(r)[s]$ in ${\mathcal {DM}}_{eff}^-(Y_i,R)$ for all i. Then, $N \cong T(r)[s]$ in ${\mathcal {DM}}_{eff}^-(Y_{\bullet},R)$.
	\end{prop}
\begin{proof}
See \cite[Proposition 3.1.5]{SV}. 
\end{proof}
	
	We point out that, for $R=\Z/2$, the cohomology group $H^1(CC(Y_{\bullet}),R^{\times})$ is always trivial.
	
	The next result is the core technique inspired by \cite{SV} that enables to generate long exact sequences in motivic cohomology, similar to Gysin sequences for sphere bundles in topology, for fibrations with motivically Tate fibers.
	
	\begin{prop}\label{Thom1}
		Let $\pi:X_{\bullet} \rightarrow Y_{\bullet}$ be a smooth coherent morphism of smooth simplicial schemes over $k$ and $A$ a smooth $k$-scheme such that:\\
		1) over the $0$ simplicial component $\pi$ is the projection $Y_0 \times A \rightarrow Y_0$;\\
		2) $H^1(CC(Y_{\bullet}),R^{\times}) \cong 0$;\\
		3) $M(A) \cong T \oplus T(r)[s-1] \in {\mathcal {DM}}_{eff}^-(k,R)$.
		
		Then, $M(Cone(\pi)) \cong T(r)[s] \in {\mathcal {DM}}_{eff}^-(Y_{\bullet},R)$ where $Cone(\pi)$ is the cone of $\pi$ in $Spc_*(Y_{\bullet})$. Moreover, we get a Thom isomorphism of $H(Y_{\bullet},R)$-modules 
		$$H^{*-s,*'-r}(Y_{\bullet},R) \rightarrow H^{*,*'}(Cone(\pi),R).$$
	\end{prop}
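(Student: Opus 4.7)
The plan is to invoke Proposition \ref{SV} with the motive $N = M(Cone(\pi)) \in {\mathcal DM}_{eff}^-(Y_{\bullet}, R)$ and the target Tate motive $T(r)[s]$. Hypothesis 2) is exactly the vanishing condition required by Proposition \ref{SV}, so it remains to check that $N$ is coherent and that every restriction $N_i \in {\mathcal DM}_{eff}^-(Y_i,R)$ is isomorphic to $T(r)[s]$; the Thom isomorphism will then fall out by representability of motivic cohomology.

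For coherence, the cofiber sequence $X_{\bullet} \to Y_{\bullet} \to Cone(\pi) \to S^1 \wedge X_{\bullet}$ in $Spc_*(Y_{\bullet})$ induces a distinguished triangle in ${\mathcal DM}_{eff}^-(Y_{\bullet}, R)$ whose first two terms are $M(X_{\bullet} \xrightarrow{\pi} Y_{\bullet}) = L\pi_{\#}(T_{X_{\bullet}})$ and $T_{Y_{\bullet}}$. Both are coherent, since $L\pi_{\#}$ preserves coherence for smooth coherent $\pi$ and the unit object is tautologically coherent, so their cone $N$ is coherent too. For the fibrewise identification, the smooth coherence of $\pi$ applied to the unique simplicial map $[0] \to [i]$ yields a cartesian square
$$
\xymatrix{
X_i \ar@{->}[r]^{\pi_i} \ar@{->}[d] & Y_i \ar@{->}[d]\\
X_0 \ar@{->}[r]_{\pi_0} & Y_0
}
$$
which, combined with hypothesis 1), identifies $\pi_i : X_i \to Y_i$ with the projection $Y_i \times A \to Y_i$. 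Base changing the splitting $M(A) = T \oplus T(r)[s-1]$ along $Y_i \to \mathrm{Spec}(k)$ then gives $M(Y_i \times A \to Y_i) \cong T \oplus T(r)[s-1]$ in ${\mathcal DM}_{eff}^-(Y_i, R)$, with the morphism to $T = M(\mathrm{id}_{Y_i})$ induced by $\pi_i$ corresponding to the projection onto the first summand. Taking cones produces $N_i \cong T(r)[s]$, as needed.

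Proposition \ref{SV} now delivers $M(Cone(\pi)) \cong T(r)[s]$ in ${\mathcal DM}_{eff}^-(Y_{\bullet}, R)$, and the Thom isomorphism is obtained by applying the functor $\mathrm{Hom}_{{\mathcal DM}_{eff}^-(Y_{\bullet}, R)}(-, T(*')[*])$, which converts the shift of the source into the advertised shift of bidegrees; the $H(Y_{\bullet}, R)$-module structure is preserved automatically. I expect the one real subtle point to be the compatibility between the chosen splitting of $M(A)$ and the natural map $M(A) \to M(\mathrm{Spec}(k)) = T$ coming from the structure morphism of $A$: once one verifies that the $T$-summand is the one singled out by this morphism, the rest of the fibrewise cone computation reduces to base change, and the conclusion follows from Proposition \ref{SV}.
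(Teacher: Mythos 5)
Your proof is correct and follows essentially the same route as the paper's: pass to the distinguished triangle associated to the cofiber sequence, note coherence of $M(Cone(\pi))$ from coherence of $L\pi_{\#}(T)$ and $T$, identify $\pi_i$ with the projection $Y_i\times A\to Y_i$ using smooth coherence plus hypothesis 1), deduce $M(Cone(\pi))_i\cong T(r)[s]$, and then apply Proposition \ref{SV}. One trivial slip: there is no \emph{unique} simplicial map $[0]\to[i]$ (there are $i+1$ of them), but any choice yields the required cartesian square, so the argument is unaffected. The compatibility concern you raise at the end — that the $T$-summand in the decomposition $M(A)=T\oplus T(r)[s-1]$ must be the one picked out by the structure morphism $A\to\mathrm{Spec}(k)$ — is a genuine implicit assumption that the paper also glosses over; in the applications $A$ is a split affine quadric with a rational point and the splitting is the canonical one $M(A)=T\oplus\tilde M(A)$, so the concern is harmless there, but you were right to flag it.
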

	\begin{proof}
		In $Spc_*(Y_{\bullet})$ we have a cofiber sequence 
		$$X_{\bullet} \xrightarrow{\pi} Y_{\bullet} \rightarrow Cone(\pi) \rightarrow S^1 \wedge X_{\bullet}$$ 
		which induces a distinguished triangle 
		$$M(X_{\bullet} \xrightarrow{\pi} Y_{\bullet}) \rightarrow T \rightarrow M(Cone(\pi)) \rightarrow M(X_{\bullet} \xrightarrow{\pi} Y_{\bullet})[1]$$ 
		in the motivic category ${\mathcal {DM}}_{eff}^-(Y_{\bullet},R)$. Since $\pi$ is smooth coherent and $\pi_0$ is the projection $Y_0 \times A \rightarrow Y_0$ by hypothesis, we have that it is the projection over any simplicial component, i.e. $\pi_i$ is the projection $Y_i \times A \cong X_i \rightarrow Y_i$ for all $i$. It immediately follows that $M(X_i \xrightarrow{\pi_i} Y_i) \cong T \oplus T(r)[s-1]$ in ${\mathcal {DM}}_{eff}^-(Y_i,R)$ since by hypothesis $M(A) \cong T \oplus T(r)[s-1]$ in ${\mathcal {DM}}_{eff}^-(k,R)$. Hence, the map $\pi_i$ induces the projection $T \oplus T(r)[s-1] \rightarrow T$ in ${\mathcal {DM}}_{eff}^-(Y_i,R)$ for any $i$, from which we get that $M(Cone(\pi))_i \cong T(r)[s]$ in ${\mathcal {DM}}_{eff}^-(Y_i,R)$. Moreover, we point out that $M(Cone(\pi))$ is a coherent motive, since both $M(X_{\bullet} \xrightarrow{\pi} Y_{\bullet})$ and $T$ are coherent objects and ${\mathcal {DM}}_{coh}^-(Y_{\bullet},R)$ is closed under taking cones. Since we are also assuming by hypothesis that $H^1(CC(Y_{\bullet}),R^{\times}) \cong 0$ we can apply Proposition \ref{SV} to $M(Cone(\pi))$. Therefore, we obtain that $M(Cone(\pi)) \cong T(r)[s]$ in ${\mathcal {DM}}_{eff}^-(Y_{\bullet},R)$, and the proof is complete. 
	\end{proof}
	
	The image of $1$ under the Thom isomorphism is called \textit{Thom class} and it is denoted by $\alpha$. 
	
	Later on, we will also need the following proposition about functoriality of the Thom isomorphism.
	
	\begin{prop}\label{Thom2}
		Let $\pi:X_{\bullet} \rightarrow Y_{\bullet}$ and $\pi':X'_{\bullet} \rightarrow Y'_{\bullet}$ be smooth coherent morphisms of smooth simplicial schemes over $k$ with $Y_0$ connected and $A$ a smooth $k$-scheme that satisfies all conditions from the previous proposition with respect to $\pi'$ and such that the following diagram is cartesian with all morphisms smooth
		$$
		\xymatrix{
			X_{\bullet} \ar@{->}[r]^{\pi} \ar@{->}[d]_{p_X} & Y_{\bullet} \ar@{->}[d]^{p_Y}\\
			X'_{\bullet} \ar@{->}[r]_{\pi'} & Y'_{\bullet}.
		}
		$$
		
		Then, the induced square of motives in the category ${\mathcal {DM}}_{eff}^-(Y'_{\bullet},R)$ extends uniquely to a morphism of triangles where $Lp_{Y\#}M(Cone(\pi)) \rightarrow M(Cone(\pi'))$ is given by $M(p_Y)(r)[s]$.
	\end{prop}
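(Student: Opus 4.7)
The plan is to apply Proposition \ref{Thom1} to both $\pi$ and $\pi'$, assemble the morphism of triangles via the axiom TR3 in ${\mathcal DM}_{eff}^-(Y'_\bullet,R)$, and then identify the induced third arrow through a Hom-group calculation. First, I would verify that $\pi$ itself satisfies the hypotheses of Proposition \ref{Thom1}. Since the square is cartesian, at level $0$ we have $X_0 = Y_0\times_{Y'_0}X'_0 = Y_0\times A$ with $\pi_0$ the projection; smoothness and coherence of $\pi$ are assumed; and the cohomological condition $H^1(CC(Y_\bullet),R^{\times})=0$ is automatic for $R=\Z/2$. Hence Proposition \ref{Thom1} gives $M(Cone(\pi))\cong T(r)[s]$ in ${\mathcal DM}_{eff}^-(Y_\bullet,R)$.

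Applying the triangulated functor $Lp_{Y\#}$ to the cofiber-sequence triangle for $\pi$, together with $Lp_{Y\#}T=M(p_Y)$ and $Lp_{Y\#}M(Cone(\pi))\cong M(p_Y)(r)[s]$, produces a distinguished triangle in ${\mathcal DM}_{eff}^-(Y'_\bullet,R)$. The identity $p_Y\circ\pi=\pi'\circ p_X$ furnishes a commutative square between the first two terms of this triangle and of the analogous triangle for $\pi'$:
$$
\xymatrix{
Lp_{Y\#}M(X_\bullet\xrightarrow{\pi}Y_\bullet) \ar@{->}[r] \ar@{->}[d] & M(p_Y) \ar@{->}[d]\\
M(X'_\bullet\xrightarrow{\pi'}Y'_\bullet) \ar@{->}[r] & T
}
$$
By TR3 this square extends to a morphism of distinguished triangles, producing a fill-in $h:M(p_Y)(r)[s]\to T(r)[s]$.

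For uniqueness of $h$, the standard triangulated-category criterion reduces to the vanishing of $\mathrm{Hom}_{{\mathcal DM}_{eff}^-(Y'_\bullet,R)}(M(p_Y)(r)[s],T)=H^{-s,-r}(Y_\bullet,R)$, which is zero since $s\geq 1$ (the decomposition $M(A)=T\oplus T(r)[s-1]$ forces the non-trivial Tate summand to sit in positive cohomological degree). To identify $h$, note that $h\in\mathrm{Hom}(M(p_Y)(r)[s],T(r)[s])\cong H^{0,0}(Y_\bullet,R)=R$, where the last equality uses that $Y_0$ is connected; hence $h$ is either $0$ or the canonical map $M(p_Y)(r)[s]$. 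Restricting to the $0$-th simplicial component reduces the picture to the trivial base change of $Y'_0\times A\to Y'_0$ along $p_{Y,0}$, where the Thom isomorphism is given by explicit multiplication by the Thom class and $h_0$ is visibly the non-zero shift $M(p_{Y,0})(r)[s]$; this rules out $h=0$, forcing $h=M(p_Y)(r)[s]$. The main obstacle will be cleanly propagating the level-$0$ non-vanishing of $h_0$ to a non-vanishing of $h$ in ${\mathcal DM}_{eff}^-(Y'_\bullet,R)$, for which the coherence framework and Proposition \ref{SV} are the essential tools.
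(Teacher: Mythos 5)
Your overall reduction — identify the fill-in $h$ as an element of $\mathrm{Hom}(Lp_{Y\#}T(r)[s],T(r)[s])\cong\mathrm{Hom}_{{\mathcal DM}_{eff}^-(Y_\bullet,R)}(T,T)\cong R$, then pin it down by restriction to the $0$th simplicial component — is exactly the skeleton of the paper's argument, and the uniqueness observation via $\mathrm{Hom}(Lp_{Y\#}T(r)[s],T)=H^{-s,-r}(Y_\bullet,R)=0$ is fine. Two things go wrong in the last step, though. First, the dichotomy ``$h$ is either $0$ or the canonical map'' is only valid when $R=\Z/2$; the proposition is stated for a general $R$ satisfying $H^1(CC(Y_\bullet),R^\times)=0$, and for such $R$ a nonzero element of $R$ need not be $1$. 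You need to show $h=1\in R$, not merely $h\neq 0$. Second, and more seriously, the ``propagation'' step you flag as the main obstacle is not resolved by coherence plus Proposition \ref{SV}, which is about recognising a coherent motive with Tate graded pieces as Tate; it has no bearing on comparing two morphisms of Tate objects. The actual resolution is elementary and you already have all the ingredients: the restriction homomorphism
\[
r_0^*\colon \mathrm{Hom}_{{\mathcal DM}_{eff}^-(Y_\bullet,R)}(T,T)\cong R\longrightarrow \mathrm{Hom}_{{\mathcal DM}_{eff}^-(Y_0,R)}(T,T)\cong R
\]
is the identity on $R$. Since you have already computed $h_0=M(p_{Y_0})(r)[s]$, which corresponds to $1\in R$ at level $0$, and $M(p_Y)(r)[s]$ also corresponds to $1\in R$ (again seen by restriction), both $h$ and $M(p_Y)(r)[s]$ map to $1$ under an isomorphism, hence are equal. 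That single observation closes the gap. Note also that the paper sidesteps TR3 entirely by building the morphism of cofiber sequences in $Spc_*(Y'_\bullet)$ from the cartesian square before passing to ${\mathcal DM}$, so the fill-in is canonical from the start rather than chosen and then shown unique; your TR3 route is workable but does a little extra bookkeeping for no gain.
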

	\begin{proof}
		We start by noticing that in $Spc_*(Y'_{\bullet})$ we can complete our commutative diagram to a morphism of cofiber sequences
		$$
		\xymatrix{
			X_{\bullet} \ar@{->}[r]^{\pi} \ar@{->}[d]_{p_X} & Y_{\bullet} \ar@{->}[r] \ar@{->}[d]^{p_Y} & Cone(\pi) \ar@{->}[r] \ar@{->}[d]^{p} & S^1{\wedge}X_{\bullet} \ar@{->}[d]^{id{\wedge}p_X}\\
			X'_{\bullet} \ar@{->}[r]^{\pi} & Y'_{\bullet} \ar@{->}[r] & Cone(\pi') \ar@{->}[r] & S^1{\wedge}X'_{\bullet}
		}
		$$
		which induces a morphism of distinguished triangles in ${\mathcal {DM}}_{eff}^-(Y'_{\bullet},R)$
		$$
		\xymatrix{
			Lp_{Y\#}M(X_{\bullet} \xrightarrow{\pi} Y_{\bullet}) \ar@{->}[r] \ar@{->}[d]_{M(p_X)} & Lp_{Y\#}T \ar@{->}[r] \ar@{->}[d]^{M(p_Y)} & Lp_{Y\#}Cone(\pi) \cong Lp_{Y\#}T(r)[s] \ar@{->}[r] \ar@{->}[d]^{M(p)} & Lp_{Y\#}M(X_{\bullet} \xrightarrow{\pi} Y_{\bullet})[1] \ar@{->}[d]^{M(p_X)[1]}\\
			M(X'_{\bullet} \xrightarrow{\pi'} Y'_{\bullet}) \ar@{->}[r] & T \ar@{->}[r] & Cone(\pi') \cong T(r)[s] \ar@{->}[r] & M(X'_{\bullet} \xrightarrow{\pi'} Y'_{\bullet})[1]
		}
		$$
		where the isomorphisms in the third column follow by Proposition \ref{Thom1}. If we restrict our previous diagrams to the $0$ simplicial component we obtain in $Spc_*(Y'_0)$
		$$
		\xymatrix{
			Y_0 \times A \ar@{->}[r]^{\pi_0} \ar@{->}[d]_{p_{Y_0} \times id} & Y_0 \ar@{->}[r] \ar@{->}[d]^{p_{Y_0}} & Cone(\pi_0) \ar@{->}[r] \ar@{->}[d]^{p_0} & S^1 \wedge (Y_0 \times A) \ar@{->}[d]^{id \wedge (p_{Y_0} \times id)}\\
			Y'_0 \times A \ar@{->}[r]^{\pi_0} & Y'_0 \ar@{->}[r] & Cone(\pi'_0) \ar@{->}[r] & S^1 \wedge (Y'_0 \times A)
		}
		$$
		and in ${\mathcal {DM}}_{eff}^-(Y'_0,R)$
		$$
		\xymatrix{
			Lp_{Y_0\#}T \oplus Lp_{Y_0\#}T(r)[s-1] \ar@{->}[r] \ar@{->}[d]_{M(p_{Y_0}) \oplus M(p_{Y_0})(r)[s-1]} & Lp_{Y_0\#}T \ar@{->}[r] \ar@{->}[d]^{M(p_{Y_0})} & Lp_{Y_0\#}T(r)[s] \ar@{->}[r] \ar@{->}[d]^{M(p_0)} & Lp_{Y_0\#}T[1] \oplus Lp_{Y_0\#}T(r)[s] \ar@{->}[d]^{M(p_{Y_0})[1] \oplus M(p_{Y_0})(r)[s]}\\
			T \oplus T(r)[s-1] \ar@{->}[r] & T \ar@{->}[r] & T(r)[s] \ar@{->}[r] & T[1] \oplus T(r)[s]
		}
		$$
		from which we deduce that $M(p_0)$ must be $M(p_{Y_0})(r)[s]$. Notice that the morphisms $M(p)(-r)[-s]$ and $M(p_Y)$ are in 
		$$Hom_{{\mathcal {DM}}_{eff}^-(Y'_{\bullet},R)}(Lp_{Y\#}T,T) \cong Hom_{{\mathcal {DM}}_{eff}^-(Y_{\bullet},R)}(T,p_Y^*T) \cong Hom_{{\mathcal {DM}}_{eff}^-(Y_{\bullet},R)}(T,T) \cong R$$
		and, for the same reason, $M(p_0)(-r)[-s]=M(p_{Y_0})$ is in 
		$$Hom_{{\mathcal {DM}}_{eff}^-(Y'_0,R)}(Lp_{Y_0\#}T,T) \cong Hom_{{\mathcal {DM}}_{eff}^-(Y_0,R)}(T,p_{Y_0}^*T)
		\cong Hom_{{\mathcal {DM}}_{eff}^-(Y_0,R)}(T,T) \cong R.$$
		
		Since the homomorphism
		$$r_0^*:Hom_{{\mathcal {DM}}_{eff}^-(Y_{\bullet},R)}(T,T) \cong R \rightarrow Hom_{{\mathcal {DM}}_{eff}^-(Y_0,R)}(T,T) \cong R$$
		is the identity on $R$, we get that $M(p)=M(p_Y)(r)[s]$, as we aimed to show. 
	\end{proof}
	
	In particular, from the previous proposition it immediately follows the next corollary about functoriality of Thom classes.
	
	\begin{cor}\label{Thom3}
		Under the hypothesis of Proposition \ref{Thom2}, the homomorphism of $H(Y'_{\bullet},R)$-modules 
		$$p^*:H^{*,*'}(Cone(\pi'),R) \rightarrow H^{*,*'}(Cone(\pi),R)$$ 
		sends $\alpha'$ to $\alpha$, where $\alpha'$ and $\alpha$ are the respective Thom classes.
	\end{cor}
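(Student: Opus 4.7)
The plan is to unpack Proposition~\ref{Thom2} at the level of cohomology. Recall that under the isomorphism $M(Cone(\pi)) \cong T(r)[s]$ in ${\mathcal DM}_{eff}^-(Y_\bullet, R)$ supplied by Proposition~\ref{Thom1}, the Thom isomorphism sends $1 \in H^{0,0}(Y_\bullet, R)$ to $\alpha \in H^{s,r}(Cone(\pi), R)$ by the very definition of $\alpha$ given right after Proposition~\ref{Thom1}; analogously $\alpha'$ is the image of $1 \in H^{0,0}(Y'_\bullet, R)$ under the Thom isomorphism associated to $\pi'$.

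Proposition~\ref{Thom2} gives a morphism of distinguished triangles in ${\mathcal DM}_{eff}^-(Y'_\bullet, R)$ whose third vertical arrow, after the Thom identifications, is $M(p_Y)(r)[s]: Lp_{Y\#} T(r)[s] \to T(r)[s]$. Passing to motivic cohomology through the $(Lp_{Y\#}, p_Y^*)$-adjunction, this identifies $p^*: H^{*,*'}(Cone(\pi'), R) \to H^{*,*'}(Cone(\pi), R)$, up to the Thom shifts, with the ordinary pullback $p_Y^*: H(Y'_\bullet, R) \to H(Y_\bullet, R)$. Concretely, the square
$$
\xymatrix{
H^{*-s, *'-r}(Y'_\bullet, R) \ar@{->}[r]^{\phi_{\pi'}} \ar@{->}[d]_{p_Y^*} & H^{*, *'}(Cone(\pi'), R) \ar@{->}[d]^{p^*} \\
H^{*-s, *'-r}(Y_\bullet, R) \ar@{->}[r]_{\phi_\pi} & H^{*, *'}(Cone(\pi), R)
}
$$
commutes, where $\phi_\pi$ and $\phi_{\pi'}$ are the two Thom isomorphisms.

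Evaluating at the unit $1 \in H^{0,0}(Y'_\bullet, R)$ then yields $p^*(\alpha') = p^*(\phi_{\pi'}(1)) = \phi_\pi(p_Y^*(1)) = \phi_\pi(1) = \alpha$, which is the claim. There is essentially no serious obstacle: the corollary is a direct cohomological reformulation of Proposition~\ref{Thom2}. The only bit that merits a line or two of verification is that the motivic identity "$M(p) = M(p_Y)(r)[s]$ after Thom identifications" translates into the commutativity of the square above, which is the standard dictionary between morphisms in ${\mathcal DM}_{eff}^-(Y'_\bullet, R)$ and induced homomorphisms on motivic cohomology under the smooth-base-change adjunction.
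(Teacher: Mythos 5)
Your proposal is correct and matches the paper's implicit reasoning: the paper itself offers no written proof, stating only that the corollary ``immediately follows'' from Proposition \ref{Thom2}, and what you have written is precisely the routine unpacking of that assertion --- translating the identity $M(p)=M(p_Y)(r)[s]$ through the $(Lp_{Y\#},p_Y^*)$-adjunction into a commutative square of Thom isomorphisms and evaluating at $1$.
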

	
	\section{The Nisnevich classifying space}
	
	Throughout this paper, we are mainly interested in Nisnevich classifying spaces of linear algebraic groups over $Spec(k)$. In this section we recall some of their properties and relations with  \'etale classifying spaces. The contents of this section are similar to Section 4 in \cite{T}. The main difference resides on the fact that, in order to deal with the $Spin$-case, it is essential to weaken the hypothesis in Proposition 5.1 from ``is injective" (see \cite[Proposition 4.1]{T}) to ``has trivial kernel". Moreover, here we have added Corollary \ref{rat}, Proposition \ref{bso} and Corollary \ref{corWu}, which were not present in \cite{T}.
	
	Given a linear algebraic group $G$ over $k$, let us call by $EG$ the simplicial scheme defined on simplicial components by $(EG)_n=G^{n+1}$ with partial projections and partial diagonals as face and degeneracy maps respectively. The operation in $G$ induces a natural action on $EG$. Then, the Nisnevich classifying space $BG$ is obtained by taking the quotient respect to this action, i.e. $BG=EG/G$. Moreover, from the morphism of sites $\pi:(Sm/k)_{\acute{e}t} \rightarrow (Sm/k)_{Nis}$ we obtain the following adjunction 
	\begin{align*}
		{\mathcal H}_s((S&m/k)_{\acute{e}t})\\
		\pi^* \uparrow & \downarrow R\pi_*\\
		{\mathcal H}_s((S&m/k)_{Nis})
	\end{align*}
	where $\pi_*$ is the restriction to Nisnevich topology and $\pi^*$ is \'etale sheafification. Then, a definition of the \'etale classifying space of $G$ is provided by $B_{\acute{e}t}G=R\pi_*\pi^*BG$ . Although this definition presents \'etale classifying spaces as objects of ${\mathcal H}_s((Sm/k)_{Nis}$, there exists a geometric construction for their $A^1$-homotopy type (see $\cite{MV}$) obtained from a faithful representation $\rho:G \hookrightarrow GL(V)$ by taking the quotient respect to the diagonal action of $G$ on an open subscheme of an infinite-dimensional affine space $\oplus_{i=1}^{\infty}V$ where $G$ acts freely. 
	
	Now, let $H$ be an algebraic subgroup of $G$. Then, we can define two simplicial objects related to $BH$, namely a bisimplicial scheme $\widetilde{B}H=(EH \times EG)/H$ and a simplicial scheme $\widehat{B}H=EG/H$. We highlight that the obvious morphism of simplicial schemes $\pi:\widehat{B}H \rightarrow BG$ is trivial over each simplicial component with $G/H$-fibers. At this point, let us call by $\phi:\widetilde{B}H \rightarrow BH$ and $\psi:\widetilde{B}H \rightarrow \widehat{B}H$ the two natural projections. Notice that $\phi$ is always trivial over each simplicial component with contractible fiber $EG$, therefore an isomorphism in ${\mathcal H}_s(k)$. The behaviour of $\psi$ is somewhat different. Indeed, we need to impose a precise condition in order to make it an isomorphism. 
	
	\begin{prop}\label{BG1}
		If the map $Hom_{{\mathcal H}_s(k)}(Spec(R),B_{\acute{e}t}H) \rightarrow Hom_{{\mathcal H}_s(k)}(Spec(R),B_{\acute{e}t}G)$ has trivial kernel for any Henselian local ring $R$ over $k$, then $\psi$ is an isomorphism in ${\mathcal H}_s(k)$. In particular, $BH \cong \widehat{B}H$ in ${\mathcal H}_s(k)$.
	\end{prop}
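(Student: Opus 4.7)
The plan is to verify that $\psi$ is a local weak equivalence of simplicial Nisnevich sheaves, hence an isomorphism in ${\mathcal H}_s(k)$, by checking it stalkwise. Since the Nisnevich stalks on $Sm/k$ are $Spec(R)$ for Henselian local $R$ over $k$, it suffices to show that the induced map of simplicial sets $\psi(Spec(R)):\widetilde{B}H(Spec(R))\to\widehat{B}H(Spec(R))$ is a weak equivalence for every such $R$.

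At simplicial level $n$, by descent applied to the scheme quotient $\widetilde{B}H_n = (H^{n+1}\times G^{n+1})/H$, sections over $Spec(R)$ correspond to isomorphism classes of triples $(T,\phi,\psi)$, where $T$ is an \'etale $H$-torsor over $Spec(R)$ and $\phi:T\to H^{n+1}$, $\psi:T\to G^{n+1}$ are $H$-equivariant. Decomposing $H^{n+1}\cong H\times H^n$ as an $H$-equivariant scheme, with $H$ acting by left multiplication on the first factor alone, the datum of $\phi$ is equivalent to a trivialization of $T$ together with an element of $H(R)^n$; in particular, $T$ must be trivial. This gives $\widetilde{B}H(Spec(R))_n=(H(R)^{n+1}\times G(R)^{n+1})/H(R)$. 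Similarly, sections of $\widehat{B}H_n = G^{n+1}/H$ are pairs $(T,\psi)$ with $T$ an \'etale $H$-torsor and $\psi:T\to G^{n+1}$ $H$-equivariant; such a $\psi$ induces a trivialization of the associated $G$-torsor $T\times_H G$, so the class $[T]\in H^1_{\acute{e}t}(R,H)$ lies in the kernel of $H^1_{\acute{e}t}(R,H)\to H^1_{\acute{e}t}(R,G)$. Under the identification $Hom_{{\mathcal H}_s(k)}(Spec(R),B_{\acute{e}t}H)=H^1_{\acute{e}t}(R,H)$, the injectivity hypothesis forces $T$ to be trivial, yielding $\widehat{B}H(Spec(R))_n=G(R)^{n+1}/H(R)$.

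Under these identifications, $\psi(Spec(R))$ becomes the canonical comparison map $EH(R)\times_{H(R)}EG(R)\to EG(R)/H(R)$ from the Borel construction to the strict quotient of the free action of $H(R)$ on $EG(R)$. Since $EH(R)$ is a contractible simplicial set and the action is free (the stabilizer of any tuple in $G(R)^{n+1}$ is trivial), this comparison map is a weak equivalence of simplicial sets by standard arguments. Hence $\psi$ is an isomorphism on every Nisnevich stalk, and therefore in ${\mathcal H}_s(k)$; the ``in particular'' statement then follows by composition with the already-established isomorphism $\phi:\widetilde{B}H\to BH$.

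The main obstacle is the passage from $\widehat{B}H(Spec(R))_n$ interpreted via \'etale $H$-torsors to the set-theoretic quotient $G(R)^{n+1}/H(R)$: this is precisely where the injectivity hypothesis intervenes, since without it a non-trivial \'etale $H$-torsor $T$ could contribute extra sections invisible to the Borel-construction side. Everything else reduces to formal manipulations with descent and the classical comparison between Borel constructions and strict quotients for free group actions.
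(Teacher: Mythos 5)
Your proof is correct and is substantively the same argument as the paper's, repackaged. Both reduce to evaluating at Henselian local $R$ (for you via stalkwise checking of local weak equivalences, for the paper via \cite[Lemma 1.11]{MV}), and both invoke the injectivity hypothesis at exactly the same point: to show that the $H$-torsor encoding an $R$-point of $G^{n+1}/H$ is already split, so that $(G^{n+1}/H)(R)=G(R)^{n+1}/H(R)$. Where you differ is in the final simplicial homotopy argument. The paper fixes a level $n$ of the $EG$-direction and recognizes the resulting row $(EH\times G^{n+1})/H$ as the \v{C}ech simplicial scheme of the torsor $G^{n+1}\to G^{n+1}/H$; after splitting over $R$ it reduces to the contractibility of the \v{C}ech nerve of a split surjection of sets. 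You instead pass to the diagonal first and identify $\psi(Spec(R))$ with the Borel-to-quotient comparison $EH(R)\times_{H(R)}EG(R)\to EG(R)/H(R)$ for the free diagonal action, invoking the standard fact about Borel constructions of free actions. These are two standard and interchangeable formulations of the same weak equivalence, so the proofs buy essentially the same thing; your version handles the diagonal explicitly, while the paper leaves the ``rows are weak equivalences $\Rightarrow$ diagonal is'' step implicit. You also spell out the deduction of the ``in particular'' statement via composition with $\phi$, which the paper omits.
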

	\begin{proof} 
		We start by noticing that the restriction of $\psi$ over any simplicial component is given by the morphism $(EH \times G^{n+1})/H \rightarrow G^{n+1}/H$. The simplicial scheme $(EH \times G^{n+1})/H$ is nothing but the \v{C}ech simplicial scheme $\check{C}({G^{n+1} \rightarrow G^{n+1}/H})$ associated to the $H$-torsor $G^{n+1} \rightarrow G^{n+1}/H$ which becomes split once extended to $G$. In order to check that
		$$\check{C}({G^{n+1} \rightarrow G^{n+1}/H}) \rightarrow G^{n+1}/H$$ 
		is a simplicial weak equivalence it is enough, by \cite[Lemma 1.11]{MV}, to evaluate on henselian local rings. Therefore, we need to look at the morphism of simplicial sets
		$$\check{C}({G^{n+1}(R) \rightarrow (G^{n+1}/H)(R)}) \rightarrow (G^{n+1}/H)(R)$$
		for any henselian local ring $R$ over $k$. Now, the fiber of $G^{n+1} \rightarrow G^{n+1}/H$ over any point $Spec(R)$ of $G^{n+1}/H$ is given by a $H$-torsor $P \rightarrow Spec(R)$ whose extension to $G$ is split, so split itself by hypothesis. In other words, this fiber is nothing but the split $H$-torsor $H \times Spec(R) \rightarrow Spec(R)$. In this way we have found a splitting of $G^{n+1}(R) \rightarrow (G^{n+1}/H)(R)$ which proves that $\check{C}({G^{n+1}(R) \rightarrow (G^{n+1}/H)(R)}) \rightarrow (G^{n+1}/H)(R)$ is a weak equivalence of simplicial sets, for any henselian local ring $R$. This implies that $\psi$ is an isomorphism in ${\mathcal H}_s(k)$. 
	\end{proof}
	
	In practice, in the case we are interested in, it is enough to check the hypothesis of the previous proposition only for field extensions of $k$. The reason resides on the fact that rationally trivial quadratic forms are Zariski-locally trivial (see \cite[Theorem 5.1]{OP}). Indeed, we have the following corollary to the previous proposition.
	
	\begin{cor}\label{rat}
		Let $H$ and $G$ be such that all rationally trivial $H$-torsors and $G$-torsors are Zariski-locally trivial. If the map $Hom_{{\mathcal H}_s(k)}(Spec(K),B_{\acute{e}t}H) \rightarrow Hom_{{\mathcal H}_s(k)}(Spec(K),B_{\acute{e}t}G)$ has trivial kernel for any field extension $K$ of $k$, then $\psi$ is an isomorphism in ${\mathcal H}_s(k)$. In particular, $BH \cong \widehat{B}H$ in ${\mathcal H}_s(k)$.
	\end{cor}
	\begin{proof}
		Let $R$ be any Henselian local ring over $k$ and $K$ its field of fractions. Then, we have the following commutative diagram
		$$
		\xymatrix{
			Hom_{{\mathcal H}_s(k)}(Spec(R),B_{\acute{e}t}H) \ar@{->}[r] \ar@{->}[d] & Hom_{{\mathcal H}_s(k)}(Spec(R),B_{\acute{e}t}G) \ar@{->}[d]\\
			Hom_{{\mathcal H}_s(k)}(Spec(K),B_{\acute{e}t}H) \ar@{->}[r] & Hom_{{\mathcal H}_s(k)}(Spec(K),B_{\acute{e}t}G).
		}
		$$
		
		Saying that all rationally trivial $H$-torsors and $G$-torsors are Zariski-locally trivial implies that the two vertical maps in the previous diagram have trivial kernels. Moreover, by hypothesis, we have that the bottom horizontal map has trivial kernel too. Therefore, the top horizontal map has trivial kernel and the statement follows by Proposition \ref{BG1}. 
	\end{proof}
	
	The natural embedding of algebraic groups $H \hookrightarrow G$ induces two morphisms $j:BH \rightarrow \widehat{B}H$ and $g:BH \rightarrow BG$. The following result tells us that, under the hypothesis of the previous proposition, $j$ identifies $BH$ and $\widehat{B}H$ in ${\mathcal H}_s(k)$.
	
	\begin{prop}\label{BG2}
		Under the hypothesis of Proposition \ref{BG1}, $j$ is an isomorphism in ${\mathcal H}_s(k)$.
	\end{prop}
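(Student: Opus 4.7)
The plan is to factor $j$ as a composition of two morphisms already known to be isomorphisms in ${\mathcal H}_s(k)$, by producing a canonical section of $\phi$ built from the embedding $H \hookrightarrow G$. The first step will be to construct a morphism $s: BH \to \widetilde{B}H$ in $Spc(k)$ satisfying $\phi \circ s = \mathrm{id}_{BH}$ and $\psi \circ s = j$; the conclusion will then follow at once, since $\phi$ is always a simplicial weak equivalence and $\psi$ is one under our hypothesis by Proposition \ref{BG1}.

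To construct $s$, I would use that the embedding $\iota: H \hookrightarrow G$ induces an $H$-equivariant simplicial morphism $EH \to EG$, hence the $H$-equivariant morphism $EH \to EH \times EG$, $x \mapsto (x, \iota(x))$. Passing to the quotient by the diagonal $H$-action on $EH \times EG$ produces the desired section $s: BH = EH/H \to (EH \times EG)/H = \widetilde{B}H$. Both triangles then commute on the nose at the level of simplicial schemes: $\phi$ projects out the second factor, so $\phi \circ s = \mathrm{id}_{BH}$, while $\psi$ projects out the first factor and then quotients by $H$, giving $\psi \circ s([x]) = [\iota(x)] = j([x])$ by the very definition of $j$.

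Since $\phi$ is an isomorphism in ${\mathcal H}_s(k)$ (its restriction to each simplicial component is a projection with contractible fiber $EG$), the identity $\phi \circ s = \mathrm{id}_{BH}$ forces $s$ to be the two-sided homotopy inverse of $\phi$, and in particular an isomorphism in ${\mathcal H}_s(k)$. Under the hypothesis of Proposition \ref{BG1}, $\psi$ is also an isomorphism in ${\mathcal H}_s(k)$, so $j = \psi \circ s$ is a composition of two isomorphisms in ${\mathcal H}_s(k)$, and we conclude.

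I do not anticipate any genuine obstacle: the whole argument is just bookkeeping with the three models $BH$, $\widetilde{B}H$, $\widehat{B}H$ of the classifying space of $H$, combined with what has already been established in this section. The only care required is in verifying that $s$ is well defined as a morphism of simplicial schemes (which is automatic from the $H$-equivariance of $\iota$) and that the two triangles $\phi \circ s = \mathrm{id}$ and $\psi \circ s = j$ commute strictly — both purely formal checks.
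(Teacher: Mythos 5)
Your proof is correct, and it takes a genuinely different (and arguably cleaner) route than the paper's. Where the paper produces an explicit simplicial homotopy
$F_i^{(n)}\colon (H^{n+1}\times G^{n+1})/H\to G^{n+2}/H$,
$(h_0,\dots,h_n,g_0,\dots,g_n)\mapsto(h_0,\dots,h_i,g_i,\dots,g_n)$,
between $j\phi$ and $\psi$ on the diagonal $\Delta(\widetilde{B}H)$, and then concludes $j=\psi\circ\phi^{-1}$ in $\mathcal{H}_s(k)$, you instead exhibit a strict section $s$ of $\phi$ with $\psi\circ s=j$ on the nose. Both arguments terminate in the same identity $j=\psi\circ\phi^{-1}$, but your section avoids introducing the homotopy at all: once $\phi$ is known to be invertible in $\mathcal{H}_s(k)$, the equation $\phi\circ s=\mathrm{id}$ immediately forces $s=\phi^{-1}$, and you never have to verify the simplicial identities for a homotopy. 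The one point you should state more carefully is the bisimplicial bookkeeping: $\widetilde{B}H=(EH\times EG)/H$ is a bisimplicial scheme, so your section is really a map of simplicial schemes $s\colon BH\to\Delta(\widetilde{B}H)$ given on components by $[h_0,\dots,h_n]\mapsto[h_0,\dots,h_n,\iota(h_0),\dots,\iota(h_n)]$, and the $\phi$ and $\psi$ that appear in the factorization are the maps induced on diagonals, whose status as weak equivalences is exactly what the paper establishes just before this point. With that made explicit, the argument is complete.
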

	\begin{proof}
		We already know that in this case the morphisms of bisimplicial schemes $\phi$ and $\psi$ become weak equivalences once restricted to simplicial components. It follows that the morphisms they induce on the respective diagonal simplicial objects, namely $\phi:\Delta(\widetilde{B}H) \rightarrow BH$ and $\psi:\Delta(\widetilde{B}H) \rightarrow \widehat{B}H$, are weak equivalences. So, in order to get the result, it is enough to provide a simplicial homotopy $F_i^{(n)}:(H^{n+1} \times G^{n+1})/H \rightarrow G^{n+2}/H$ between $j \phi$ and $\psi$. One is given by 
		$$F_i^{(n)}(h_0,{\dots},h_n,g_0,{\dots},\gamma_n)=(h_0,{\dots},h_i,g_i,{\dots},\gamma_n)$$ 
		for any $n$ and any $0 \leq i \leq n$.  
	\end{proof}
	
	\begin{rem}\label{hom}
		\normalfont
	Note that, since $g=\pi j$, the homomorphism $j^*:H(\widehat{B}H) \rightarrow H(BH)$ is an isomorphism of $H(BG)$-modules.
	\end{rem}
	
	The reason why we would like to work with $\widehat{B}H \rightarrow BG$ instead of $BH \rightarrow BG$ is that the first is a coherent morphism which is trivial over the $0$ simplicial component with fiber $G/H$. So, provided that the reduced motive of $G/H$ is Tate, we could apply to it Proposition \ref{Thom1}. In a nutshell, this is how one can reconstruct the cohomology of the Nisnevich classifying space of an algebraic group inductively by considering some filtration of it.
	
	We now move our attention to some particular examples which are of main interest for the purposes of this paper. First, we recall that $O_n$-torsors are in one-to-one correspondence with quadratic forms, $SO_n$-torsors are in one-to-one correspondence with quadratic forms with trivial discriminant and $Spin_n$-torsors yield quadratic forms with trivial discriminant and Clifford invariant via a surjective map with trivial kernel for $n \geq 2$. Hence, we can apply Propositions \ref{BG1} and \ref{BG2} to the case that $G$ and $H$ are respectively $O_{n+1}$ and $O_n$, or $SO_{n+1}$ and $SO_n$, or $Spin_{n+1}$ and $Spin_n$ for $n \geq 2$.  Moreover, we have the following short exact sequences of algebraic groups
	$$1 \rightarrow SO_n \rightarrow O_n \rightarrow \mu_2 \rightarrow 1,$$
	$$1 \rightarrow \mu_2 \rightarrow Spin_n \rightarrow SO_n \rightarrow 1$$
	from which we get that 
	$$A_{q_{n+1}} \cong O_{n+1}/O_n \cong SO_{n+1}/SO_n \cong Spin_{n+1}/Spin_n$$
	where $A_{q_{n+1}}$ is the affine quadric defined by the equation $q_{n+1}=1$. Now we recall that 
	$$M(A_{q_{n+1}}) \cong \Z \oplus \Z([(n+1)/2])[n] \in DM^-_{eff}(k)$$ 
	by \cite[Proposition 3.1.3]{SV}. Hence, we can apply Proposition \ref{Thom1} to the fibrations we are mostly interested in, namely $\widehat{B}O_n \rightarrow BO_{n+1}$, $\widehat{B}SO_n \rightarrow BSO_{n+1}$ and $\widehat{B}Spin_n \rightarrow BSpin_{n+1}$.
	
	Indeed, by exploiting the arguments above mentioned the following theorem is obtained in \cite{SV}.
	
	\begin{thm}\label{SubtleSW}
		 There is a unique set $u_1,{\dots},u_n$ of classes in the motivic $\Z/2$-cohomology of $BO_n$ such that $deg(u_i)=([i/2])[i]$, $u_i$ vanishes when restricted to $H(BO_{i-1})$ for any $2 \leq i \leq n$ and
		$$H(BO_n) \cong H[u_1,{\dots},u_n].$$
	\end{thm}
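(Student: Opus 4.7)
The plan is to induct on $n$, using Proposition \ref{Thom1} applied to the smooth coherent projection $\pi:\widehat{B}O_n \to BO_{n+1}$, whose fiber $A_{q_{n+1}} \cong O_{n+1}/O_n$ has motive $T \oplus T([(n+1)/2])[n]$. By Propositions \ref{BG1}--\ref{BG2}, $\widehat{B}O_n \cong BO_n$ in ${\mathcal H}_s(k)$, so their cohomologies agree. The base case $n=1$ is the direct computation $H(BO_1) = H(B\mu_2) = H[u_1]$ with $u_1$ in bi-degree $(1,0)$.

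For the inductive step, set $s = n+1$ and $r = [(n+1)/2]$, and let $u_{n+1} \in H^{s,r}(BO_{n+1})$ be the image under $H^{s,r}(\mathrm{Cone}(\pi)) \to H^{s,r}(BO_{n+1})$ of the Thom class $\alpha$ produced by Proposition \ref{Thom1}. The cofiber sequence $\widehat{B}O_n \to BO_{n+1} \to \mathrm{Cone}(\pi)$ combined with the Thom isomorphism yields the Gysin-type long exact sequence
$$
\cdots \to H^{*-s,*'-r}(BO_{n+1}) \xrightarrow{\cdot u_{n+1}} H^{*,*'}(BO_{n+1}) \xrightarrow{j^*} H^{*,*'}(BO_n) \xrightarrow{\partial} H^{*+1-s,*'-r}(BO_{n+1}) \to \cdots
$$
Since $j^* \circ (\cdot u_{n+1}) = 0$, the class $u_{n+1}$ restricts to zero on $H(BO_n)$, which is the prescribed vanishing condition. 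The crux is then to show that $j^*$ is surjective, forcing $\partial = 0$, the long exact sequence to split into short exact sequences, and multiplication by $u_{n+1}$ to be injective. Granting this, the inductive hypothesis $H(BO_n) = H[u_1,\ldots,u_n]$ and any choice of lifts $\tilde u_i \in H(BO_{n+1})$ of the $u_i$ give $H(BO_{n+1}) = H[\tilde u_1, \ldots, \tilde u_n, u_{n+1}]$ via a standard argument on the $u_{n+1}$-adic filtration. Uniqueness of the generators is forced by the specified bi-degrees together with the vanishing conditions.

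I expect the surjectivity of $j^*$ to be the main obstacle. My strategy would be to construct the lifts $\tilde u_i$ intrinsically by repeating the Thom class procedure at each stage of the filtration $BO_1 \subset BO_2 \subset \cdots \subset BO_{n+1}$, and then to invoke Corollary \ref{Thom3} on the relevant squares of fibrations to certify that these candidates restrict to $u_i$ on every intermediate $BO_j$. Verifying the cartesian hypothesis for those squares and arranging the Thom classes compatibly at every level is the delicate point; once that is in place, the collapsed Gysin sequence delivers the polynomial structure formally.
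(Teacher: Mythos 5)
The paper does not prove this theorem; it is cited verbatim from \cite[Theorem 3.1.1]{SV}, and the surrounding text explicitly defers to \cite{SV} (``the following theorem is obtained in \cite{SV}''). So there is no in-paper proof to compare against; one can only judge your argument relative to the tools the paper makes available (Propositions \ref{BG1}, \ref{BG2}, \ref{Thom1}, \ref{Thom3}).

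Your framework is the right one and matches what the paper advertises: induction on $n$, the Gysin sequence obtained from Proposition \ref{Thom1} applied to $\widehat{B}O_n \to BO_{n+1}$, the reduction of the polynomial structure to surjectivity of $j^*$ (equivalently, injectivity of multiplication by $u_{n+1}$), and the standard $u_{n+1}$-adic filtration argument once that is granted. The base case $H(BO_1)=H[u_1]$ is also fine.

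However, your proposed resolution of the surjectivity of $j^*$ is circular. The Thom class for the fibration $\widehat{B}O_{j-1}\to\widehat{B}O_j$ (or for $EO_{n+1}/O_{j-1}\to EO_{n+1}/O_j$, identified with it by Corollary \ref{Thom3} via the evident cartesian square) produces, after applying $b_j^*$, a class living in $H(\widehat{B}O_j)\cong H(BO_j)$, not in $H(BO_{n+1})$. Corollary \ref{Thom3} only compares Thom classes of two cones across a cartesian square; it does not manufacture a lift of $u_j\in H(BO_j)$ to $H(BO_{n+1})$ --- such a lift is precisely what surjectivity of $j^*$ asserts, so it cannot serve as input to proving it. What is actually needed is external control on $\ker(\cdot\,u_{n+1})$. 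The device this paper itself uses for the parallel regular-sequence question in Section 7 is restriction along a splitting-type map ($O_2^{\times m}\hookrightarrow O_{2m}$, $O_2^{\times m}\times O_1\hookrightarrow O_{2m+1}$); here the analogue $O_1^{\times(n+1)}\hookrightarrow O_{n+1}$ sends $u_{n+1}$ to an explicit elementary symmetric polynomial in a ring whose polynomial structure is known in advance, making the nonzerodivisor property visible. Alternatively, a comparison with the topological or \'etale realization supplies the missing injectivity. Without one of these inputs the Gysin sequence has no reason to collapse and your induction does not close.
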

\begin{proof}
See \cite[Theorem 3.1.1]{SV}. 
\end{proof}
	
	The generators $u_i$ are called \textit{subtle Stiefel-Whitney classes}. It is possible to get the same description for $H(BSO_n)$ with the only difference given by the fact that $u_1=0$. Indeed, one has the following result.
	
	\begin{prop}\label{bso}
		The motivic cohomology ring of $BSO_n$ is completely described by
		$$H(BSO_n) \cong H[u_2,\dots,u_n].$$
	\end{prop}
	\begin{proof}
		It is enough to apply Proposition \ref{Thom1} to the coherent morphism $\widehat{B}SO_n \rightarrow BO_n$ whose fiber is isomorphic to $\mu_2$. This way one gets a Gysin long exact sequence of $H(BO_n)$-modules in motivic cohomology
		$$\dots \rightarrow H^{p-1,q}(BO_n) \xrightarrow{j^*} H^{p,q}(BO_n) \xrightarrow{k^*} H^{p,q}(BSO_n)\xrightarrow{l^*} H^{p,q}(BO_n) \rightarrow \dots .$$
		
		Now, note that $k^*$ is a ring homomorphism, hence it sends $1$ to $1$. Since $H^{0,0}(BSO_n) \cong \Z/2$, it follows that $l^*$ is the $0$ homomorphism in bidegree $(0)[0]$. This implies that $j^*$ sends $1$ to $u_1$. From the fact that it is a homomorphism of $H(BO_n)$-modules we deduce that $j^*$ is the multiplication by $u_1$. Hence, it is a monomorphism in all bidegrees, from which it follows that $l^*$ is the $0$ homomorphism in all bidegrees. Therefore, $k^*$ is an epimorphism and it kills all monomials divisible by $u_1$, from which we deduce that $H(BSO_n) \cong H[u_2,\dots,u_n]$. 
	\end{proof}
	
	Unfortunately, as we will see, while for orthogonal and special orthogonal groups Gysin sequences are enough to get the description of the motivic cohomology of their classifying spaces, for spin groups this is not true anymore. Indeed, we need to use also the fibrations $BSpin_n \rightarrow BSO_n$ and study their induced homomorphisms in cohomology. We achieve this in the following sections.
	
	We will also use the action of the motivic Steenrod algebra on subtle classes which is given by the following Wu formula as in the classical case.
	
	\begin{prop}\label{Wu}
		$$Sq^ku_m=
		\begin{cases}
		\sum_{j=0}^k{m+j-k-1 \choose j}u_{k-j}u_{m+j}, & 0 \leq k<m \\
		u_m^2, & k=m \\
		0, & k>m
		\end{cases}
		$$
	\end{prop}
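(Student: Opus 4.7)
The strategy is the motivic splitting principle, in close analogy with the classical derivation of the topological Wu formula. Consider the diagonal embedding $(O_1)^n \hookrightarrow O_n$ and the induced map $p : (BO_1)^n \to BO_n$. The first task is to show that $p^*$ is injective on motivic $\Z/2$-cohomology and to identify the pullbacks of the generators $u_1, \dots, u_n$ of Theorem \ref{SubtleSW}. Writing $v_i \in H((BO_1)^n)$ for the pullback along the $i$-th projection of the unique subtle class in $H(BO_1)$, I would prove that
\begin{equation*}
p^*\bigl(1 + u_1 t + \cdots + u_n t^n\bigr) \;=\; \prod_{i=1}^{n}\bigl(1 + v_i t\bigr),
\end{equation*}
so that $p^* u_m = \sigma_m(v_1, \dots, v_n)$. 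This matching is forced by the uniqueness statement in Theorem \ref{SubtleSW}: each $\sigma_m$ has the correct bidegree $([m/2], m)$, lies in the image of $p^*$ (by symmetry), and vanishes when restricted to the analogous subspace modelled on $BO_{m-1}$, so must equal $p^* u_m$.

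Granting injectivity of $p^*$, it suffices to verify the claimed identity after pulling back. The motivic Steenrod action on a degree-one generator $v$ of $H(BO_1)$ is given by $Sq^0 v = v$, $Sq^1 v = v^2$ and $Sq^k v = 0$ for $k \geq 2$, the hypothesis $\sqrt{-1} \in k$ (hence $\rho = 0$) ensuring that no corrective $\tau$-terms appear in $H(BO_1)$. Applying the Cartan formula to the factored expression $p^* u(t) = \prod_i (1 + v_i t)$ one computes
\begin{equation*}
Sq^k \sigma_m(v_1,\dots,v_n) \;=\; \sum_{j=0}^{k} \binom{m+j-k-1}{j}\, \sigma_{k-j}(v)\, \sigma_{m+j}(v),
\end{equation*}
by the same binomial bookkeeping that underlies the classical Wu formula; reading the identity back through $p^*$ yields the statement for $0 \leq k < m$. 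The boundary cases $k = m$ and $k > m$ are immediate from the unstable axiom applied term by term on $(BO_1)^n$.

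The main obstacle is the first step: establishing that $p^*$ is injective and that the total subtle class factors as $\prod_i (1 + v_i t)$. Once this splitting principle is in place, the computation in the second step is formal and mirrors the topological proof, precisely because the vanishing of $\rho$ kills the only source of genuinely motivic corrections that could distort the classical combinatorics. For this reason I would expect to invest most of the effort in the bidegree and restriction analysis that pins down the image of each $u_m$ as the elementary symmetric polynomial $\sigma_m(v_1,\dots,v_n)$.
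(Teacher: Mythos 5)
The paper does not prove this proposition; it simply cites it from \cite{SV} (Proposition 3.1.12), so there is no in-paper argument to compare against. Your proposed route via the splitting map $p:(BO_1)^n\to BO_n$ is the natural one, but as written it contains a genuine error in the central identity.

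The claim $p^*u_m=\sigma_m(v_1,\dots,v_n)$ is false on bidegree grounds. By Theorem \ref{SubtleSW}, the generator of $H(BO_1)$ lies in $H^{1,0}$ (bidegree $(0)[1]$), so each $v_i$ has weight $0$ and $\sigma_m(v)$ sits in $H^{m,0}$. But $u_m$ has bidegree $([m/2])[m]$, i.e.\ lies in $H^{m,[m/2]}$. The correct identity is $p^*u_m=\tau^{[m/2]}\sigma_m(v)$, forced by the fact that $H^{0,q}=\Z/2\cdot\tau^q$. Consequently your ``total class'' factorization $p^*(1+u_1t+\cdots+u_nt^n)=\prod_i(1+v_it)$ does not hold, and the uniqueness argument you invoke cannot produce it. Relatedly, your assertion that vanishing of $\rho$ ``kills the only source of genuinely motivic corrections'' is not right: the motivic Cartan formula for even squares carries a $\tau$-term, $Sq^{2n}(xy)=\sum Sq^{2i}x\,Sq^{2j}y+\tau\sum Sq^{2i+1}x\,Sq^{2j+1}y$, which survives when $\rho=0$ and is essential here. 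In fact both sources of $\tau$ (in $p^*u_m$ and in Cartan) interact with the parity-dependent vanishing of the binomial coefficients $\binom{m+j-k-1}{j}$ mod $2$ precisely so that the final formula is $\tau$-free; a quick check of $Sq^2u_2=u_2^2$ already shows $\tau^2$ appearing and cancelling. This $\tau$-bookkeeping is the actual content of the motivic Wu formula, and your write-up both states the wrong pullback and explicitly discards the mechanism that makes it come out. Note also that the paper's own use of a splitting principle (in the proof of Theorem \ref{seq}) goes through $O_2^{\times m}$ rather than $O_1^{\times n}$, and only after killing $\tau$, precisely because the slope-$2$ class $u_2\in H^{2,1}(BO_2)$ is better adapted to the motivic degree constraints than the weight-zero class $u_1$.

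To repair the argument you would need to (i) establish $p^*u_m=\tau^{[m/2]}\sigma_m(v)$ (injectivity of $p^*$ then follows since $\tau$ is a nonzerodivisor and $\sigma_1,\dots,\sigma_n$ are algebraically independent), (ii) use the full motivic Cartan formula with its $\tau$-term, and (iii) verify that the net $\tau$-exponent in each summand $u_{k-j}u_{m+j}$ is zero whenever the binomial coefficient is odd, which reduces to a parity check via Lucas' theorem. None of this is automatic from $\rho=0$.
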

\begin{proof}
See \cite[Proposition 3.1.12]{SV}. 
\end{proof}
	
	From the previous result we immediately deduce the following corollary which will be useful in the next sections.
	
	\begin{cor}\label{corWu}
		Let $w$ be a monomial of bidegree $([{\frac m 2}])[m]$ in $\Z/2[\tau,u_2,{\dots},u_{n+1}]$. Then, $Sq^mw=w^2$ and $Sq^jw=0$ for any $j>m$. 
	\end{cor}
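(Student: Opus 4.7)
My plan is to iterate the motivic Cartan formula, using Proposition \ref{Wu} and the vanishing $Sq^i\tau = 0$ for $i\geq 1$ recalled in Section 2.

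I would first write $w = \tau^a\prod_k u_{i_k}^{e_k}$ with $\sum_k i_k e_k = m$. Setting $O := \sum_{i_k\text{ odd}}e_k$, the identity $[m/2]-\sum_k [i_k/2]\,e_k = [O/2]$ together with the bidegree hypothesis forces $a = [O/2]$. By iterated Cartan, every summand of $Sq^j(w)$ is (up to a $\tau$-twist) of the form
\[
Sq^{j_0}(\tau^a)\cdot\prod_k Sq^{j_k}(u_{i_k}^{e_k}), \qquad \sum_{i}j_i = j.
\]
Proposition \ref{Wu} applied factor by factor (through Cartan on each power $u_{i_k}^{e_k}$) gives $Sq^{j_k}(u_{i_k}^{e_k})=0$ for $j_k>i_ke_k$; the triviality of the Steenrod action on $H$ gives $Sq^{j_0}(\tau^a)=0$ for $j_0>0$. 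For $j>m$ the equation $\sum_i j_i = j$ cannot be solved within these bounds, so every summand vanishes and $Sq^j(w)=0$.

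For $j=m$, the only possibly nonzero contribution has $j_0=0$ and $j_k=i_ke_k$ for all $k$. Iterated Cartan and Wu evaluate the corresponding product to $\tau^p\prod_k u_{i_k}^{2e_k}$, where $p$ counts the unordered pairs of odd-indexed subtle classes squared together through Cartan; this count is precisely $[O/2]=a$. Combining with the prefactor $\tau^a$,
\[
Sq^m(w) = \tau^{a+p}\prod_k u_{i_k}^{2e_k} = \tau^{2a}\prod_k u_{i_k}^{2e_k} = w^2,
\]
as asserted. The delicate point is the $\tau$-bookkeeping in this last step: the bidegree hypothesis $([m/2])[m]$ on $w$ is exactly what guarantees that the $\tau$-factors produced by the motivic Cartan formula assemble into the correct power to reproduce $w^2$. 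A convenient way to organize the calculation is by induction on $\sum_k e_k$, reducing to the cases $w=u_i$ (immediate from Wu) and $w=\tau u_i u_j$ with $i,j$ odd (a single application of motivic Cartan).
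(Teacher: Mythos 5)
Your proof is correct and takes a genuinely different route from the paper's. The paper splits the odd-$m$ case as $w = (\text{slope-2-diagonal monomial}) \cdot u_{h_r}$ by peeling off a single odd-indexed factor, and then invokes Voevodsky's Lemmas 9.8 and 9.9 from \cite{V2} as a black box (the statement that for $x$ in bidegree $(q)[2q]$ one has $Sq^{2q}x = x^2$ and $Sq^jx = 0$ for $j>2q$), so the entire computation is one application of Cartan. You instead iterate Cartan all the way down to individual factors $\tau$ and $u_{i_k}$, invoke only the Wu formula and the triviality of the Steenrod action on $H$, and recover the result by a combinatorial count of $\tau$-twists. Both are valid; the paper's is shorter, yours is more self-contained in the sense of not quoting Voevodsky's diagonal lemma. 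Your $\tau$-count $p = [O/2] = a$ is indeed what falls out of the iterated motivic Cartan formula: a $\tau$ appears each time two odd pieces are merged, and one checks (e.g. by the bidegree of both sides of the Cartan formula, or by tracking the parity of the running partial sum) that the total is $\lfloor O'/2\rfloor$ where $O'$ is the number of odd exponents in the decomposition — which for the unique surviving term is precisely $O$. Two small warnings if you flesh this out: (i) $Sq^{j_k}(u_{i_k}^{e_k}) = 0$ for $j_k > i_k e_k$ is not Proposition \ref{Wu} itself but needs the nested Cartan expansion of the power, as you note parenthetically; (ii) your proposed induction on $\sum_k e_k$ needs care with the $\tau$-exponent: when $m$ is even and you remove an odd $u_i$, the quotient $w/u_i$ does not satisfy the bidegree hypothesis — you must simultaneously divide by a $\tau$. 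Said differently, the induction should peel off $u_i$ (or $\tau u_i$, depending on parity) in a way that preserves the constraint $q = [p/2]$, which is exactly the normalization the paper enforces by stripping to the slope-2 diagonal.
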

	\begin{proof}  
		If $m$ is even, by \cite[Lemmas 9.8 and 9.9]{V2}, there is nothing to prove since $w$ is on the slope $2$ diagonal. Consider $m$ odd, then $w=\tau^{{\frac {r-1} 2}}xu_{h_1}{\cdots}u_{h_r}$ where $x$ is a monomial in even subtle classes and $u_{h_i}$ are odd subtle classes (notice that $r$ must be odd by degree reason). Therefore, by Cartan formula, we have that $Sq^mw=Sq^m(\tau^{{\frac {r-1} 2}}xu_{h_1}{\cdots}u_{h_r})=Sq^{m-h_r}(\tau^{{\frac {r-1} 2}}xu_{h_1}{\cdots}u_{h_{r-1}})Sq^{h_r}u_{h_r}=(\tau^{{\frac {r-1} 2}}xu_{h_1}{\cdots}u_{h_{r-1}})^2u_{h_r}^2=w^2$ since the monomial $\tau^{{\frac {r-1} 2}}xu_{h_1}{\cdots}u_{h_{r-1}}$ is on the slope $2$ diagonal. Moreover, $Sq^jw=0$ for $j>m$ for the same reason. 
	\end{proof}
	
	\section{The fibration $BSpin_n \rightarrow BSO_n$}\label{D}
	
	We have already noticed that the special orthogonal case does not differ much from the orthogonal one, at least from the cohomological perspective, in the sense that their motivic cohomology rings are both polynomial over the cohomology of the point in subtle Stiefel-Whitney classes. This is not true anymore for spin groups. The main reason is that in this case there are much more complicated relations among subtle classes given by the action of the motivic Steenrod algebra on $u_2$ which make the cohomology rings not polynomial in subtle Stiefel-Whitney classes anymore (precisely for $n>9$) and, moreover, new classes appear. For this reason, in order to get our main result, together with an inductive argument we need to consider the fibration $BSpin_n \rightarrow BSO_n$. More precisely, in order to investigate the motivic cohomology of $BSpin_n$, we need to consider for any $n \geq 2$ the commutative square
	$$
	\xymatrix{
		\widehat{B}Spin_n \ar@{->}[r]^{\widehat{a}_n} \ar@{->}[d]_{\widetilde{\pi}} & \widehat{B}SO_n \ar@{->}[d]^{\pi}\\
		BSpin_{n+1} \ar@{->}[r]^{a_{n+1}} & BSO_{n+1}
	}
	$$
	where $\pi$ and $\widetilde{\pi}$ are smooth coherent morphisms, trivial over simplicial components, with fiber isomorphic to the affine quadric $A_{q_{n+1}}$ defined by the equation $q_{n+1}=1$.
	
	In $Spc_*(BSO_{n+1})$ we can complete the previous diagram to the following one (commutative up to a sign in the right bottom square) where each row and each column is a cofiber sequence
	$$
	\xymatrix{
		\widehat{B}Spin_n \ar@{->}[r]^{\widehat{a}_n} \ar@{->}[d]_{\widetilde{\pi}} & \widehat{B}SO_n \ar@{->}[r]^{\widehat{b}_n} \ar@{->}[d]^{\pi} & Cone(\widehat{a}_n) \ar@{->}[r]^{\widehat{c}_n} \ar@{->}[d]^{\overline{\pi}} & S^1 \wedge \widehat{B}Spin_n \ar@{->}[d]\\
		BSpin_{n+1} \ar@{->}[r]^{a_{n+1}} \ar@{->}[d]_{\widetilde{f}} & BSO_{n+1} \ar@{->}[r]^{b_{n+1}} \ar@{->}[d]^{f} & Cone(a_{n+1}) \ar@{->}[r]^{c_{n+1}} \ar@{->}[d]^{\overline{f}} & S^1 \wedge BSpin_{n+1} \ar@{->}[d]\\
		Cone(\widetilde{\pi}) \ar@{->}[r] \ar@{->}[d] & Cone(\pi) \ar@{->}[r] \ar@{->}[d] & Cone(\overline{\pi}) \ar@{->}[r] \ar@{->}[d] & S^1 \wedge Cone(\widetilde{\pi}) \ar@{->}[d]\\
		S^1 \wedge \widehat{B}Spin_n \ar@{->}[r] & S^1 \wedge \widehat{B}SO_n \ar@{->}[r] & S^1 \wedge Cone(\widehat{a}_n) \ar@{->}[r] & S^2 \wedge \widehat{B}Spin_n.\\
	}
	$$
	
	The previous diagram induces, in turn, a commutative diagram of long exact sequences in motivic cohomology with $\Z/2$-coefficients, where all the homomorphisms are compatible with Steenrod operations and respect the $H(BSO_{n+1})$-module structure. This remark comes from the fact that the following diagram of categories
	$$
	\xymatrix{
		Spc_*(BSO_{n+1}) \ar@{->}[r] \ar@{->}[d] & {\mathcal H}_{A^1,*}(k) \ar@{->}[d]\\
		{\mathcal {DM}}_{eff}^-(BSO_{n+1},\Z/2) \ar@{->}[r] & {\mathcal {DM}}_{eff}^-(k,\Z/2)
	}
	$$
	is commutative up to a natural equivalence and both functors in the right bottom corner have adjoints from the right, so we have the action of Steenrod operations on the motivic cohomology of objects belonging to the image of $Spc_*(BSO_{n+1})$ in ${\mathcal {DM}}_{eff}^-(BSO_{n+1},\Z/2)$ pulled from ${\mathcal H}_{A^1,*}(k)$.
	
	Since $Spin$-torsors yield quadratic forms from $I^3$ via a map with trivial kernel and for quadratic forms Witt cancellation holds, we are allowed to use Propositions \ref{BG1} and \ref{BG2} and Remark \ref{hom}. As a result, we get the following infinite grid of long exact sequences
	\begin{align}\label{d1}
	\xymatrix{
		& \vdots \ar@{->}[d] & \vdots \ar@{->}[d] & \vdots \ar@{->}[d] & \vdots \ar@{->}[d]\\
		\dots \ar@{->}[r] & H^{p-2,q}(BSpin_n) \ar@{->}[r]^{c_n^*} \ar@{->}[d]_{\widetilde{h}^*} & H^{p-1,q}(Cone(a_n)) \ar@{->}[r]^{b_n^*} \ar@{->}[d]^{\overline{h}^*} & H^{p-1,q}(BSO_n) \ar@{->}[r]^{a_n^*} \ar@{->}[d]^{h^*} & H^{p-1,q}(BSpin_n) \ar@{->}[d] \ar@{->}[r] & \dots\\
		\dots \ar@{->}[r] & H^{p-1,q}(Cone(\widetilde{\pi})) \ar@{->}[r] \ar@{->}[d]_{\widetilde{f}^*} & H^{p,q}(Cone(\overline{\pi})) \ar@{->}[r] \ar@{->}[d]^{\overline{f}^*} & H^{p,q}(Cone(\pi)) \ar@{->}[r] \ar@{->}[d]^{f^*} & H^{p,q}(Cone(\widetilde{\pi})) \ar@{->}[d] \ar@{->}[r] & \dots\\
		\dots \ar@{->}[r] & H^{p-1,q}(BSpin_{n+1}) \ar@{->}[r]^{c_{n+1}^*} \ar@{->}[d]_{\widetilde{g}^*} & H^{p,q}(Cone(a_{n+1})) \ar@{->}[r]^{b_{n+1}^*} \ar@{->}[d]^{\overline{g}^*} & H^{p,q}(BSO_{n+1}) \ar@{->}[r]^{a_{n+1}^*} \ar@{->}[d]^{g^*} & H^{p,q}(BSpin_{n+1}) \ar@{->}[d] \ar@{->}[r] & \dots\\
		\dots \ar@{->}[r] & H^{p-1,q}(BSpin_n) \ar@{->}[r] \ar@{->}[d] & H^{p,q}(Cone(a_n)) \ar@{->}[r] \ar@{->}[d] & H^{p,q}(BSO_n) \ar@{->}[r] \ar@{->}[d] & H^{p,q}(BSpin_n) \ar@{->}[r] \ar@{->}[d] & \dots\\
		& \vdots  & \vdots & \vdots & \vdots}
	\end{align}
	where all the homomorphisms are compatible with Steenrod operations and respect the $H(BSO_{n+1})$-module structure.
	
	We recall that, by applying Proposition \ref{Thom1} to the smooth coherent morphism $\pi: \widehat{B}SO_n \rightarrow BSO_{n+1}$, which has fiber isomorphic to $A_{q_{n+1}}$ whose reduced motive is Tate, there is a Thom isomorphism 
	$$H^{p-n-1,q-[{\frac{n+1}{2}}]}(BSO_{n+1}) \rightarrow H^{p,q}(Cone(\pi))$$
	which sends $1$ to the Thom class $\alpha$. By Theorem \ref{SubtleSW}, modulo this isomorphism $f^*$ is just the multiplication by the subtle Stiefel-Whitney class $u_{n+1}$, since it is the only class of its bidegree vanishing in $H(BO_n)$. Since $Spin_{n+1}/Spin_n \cong A_{q_{n+1}}$, Proposition \ref{Thom1} applies also to the smooth coherent morphism $\widetilde{\pi}: \widehat{B}Spin_n \rightarrow BSpin_{n+1}$. Therefore, we have a Thom isomorphism 
	$$H^{p-n-1,q-[{\frac{n+1}{2}}]}(BSpin_{n+1}) \rightarrow H^{p,q}(Cone(\widetilde{\pi}))$$
	and a Thom class $\widetilde{\alpha} \in H^{n+1,[\frac{n+1}{2}]}(Cone(\widetilde{\pi}))$. We notice that, by Corollary \ref{Thom3}, $\widetilde{\alpha}$ is nothing but the restriction of $\alpha$ from $H^{n+1,[\frac{n+1}{2}]}(Cone(\pi))$ to $H^{n+1,[\frac{n+1}{2}]}(Cone(\widetilde{\pi}))$. Hence, modulo the Thom isomorphism, $\widetilde{f}^*$ is multiplication by $u_{n+1}$. Moreover, from Proposition \ref{Thom2} we have that the map $Cone(\widetilde{\pi}) \rightarrow Cone(\pi)$ induces in ${\mathcal {DM}}_{eff}^-(BSO_{n+1},\Z/2)$ the morphism 
	$$M(BSpin_{n+1} \xrightarrow{a_{n+1}} BSO_{n+1})([(n+1)/2])[n+1] \xrightarrow{M(a_{n+1})([(n+1)/2])[n+1]} T([(n+1)/2])[n+1]$$
	from which it follows that
	$$M(Cone(\overline{\pi})) \cong M(Cone(a_{n+1}))([(n+1)/2])[n+1]$$ 
	which induces an isomorphism 
	$$H^{p-n-1,q-[{\frac{n+1}{2}}]}(Cone(a_{n+1})) \rightarrow H^{p,q}(Cone(\overline{\pi})).$$ 
	
	Note that, from Theorem \ref{SubtleSW}, the morphism $h^*$ is always the $0$ homomorphism, which means at the same time that $g^*$ is surjective and $f^*$ is injective. From these remarks we obtain the next proposition.
	
	\begin{prop}\label{SteenrodThom}
		$Sq^m\alpha=u_m\alpha$ for any $m \leq n+1$ and $0$ otherwise. The same holds for $\widetilde{\alpha}$.
	\end{prop}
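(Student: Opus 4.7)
The plan is to exploit the Thom isomorphism $H^{*-n-1,*'-[(n+1)/2]}(BSO_{n+1}) \to H^{*,*'}(Cone(\pi))$: it presents $H^{*,*'}(Cone(\pi))$ as a free rank-one $H(BSO_{n+1})$-module generated by $\alpha$, so I would first write $Sq^m\alpha = v_m \alpha$ for a unique $v_m \in H^{m,[m/2]}(BSO_{n+1})$ and then pin down $v_m$. The natural tool for this is the homomorphism $f^*\colon H(Cone(\pi)) \to H(BSO_{n+1})$: since modulo the Thom isomorphism $f^*$ is multiplication by $u_{n+1}$, we have $f^*(\alpha) = u_{n+1}$ and $f^*(v_m\alpha) = v_m u_{n+1}$. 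Using that $f^*$ commutes with Steenrod operations, the identity $f^*(Sq^m\alpha) = Sq^m(f^*\alpha) = Sq^m u_{n+1}$ reduces the problem to solving
$$v_m\, u_{n+1} = Sq^m u_{n+1} \quad \text{in } H(BSO_{n+1}).$$

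The right-hand side is computed by the Wu formula (Proposition \ref{Wu}): $Sq^m u_{n+1} = \sum_{j=0}^m \binom{n+j-m}{j} u_{m-j} u_{n+1+j}$. Since every $u_l$ with $l > n+1$ vanishes in $H(BSO_{n+1}) = H[u_2,\ldots,u_{n+1}]$ (Theorem \ref{SubtleSW}, with $u_1 = 0$), only the term $j=0$ survives, contributing $u_m u_{n+1}$, which itself is zero when $m > n+1$. Because $u_{n+1}$ is a polynomial generator and hence a non-zero-divisor, I can cancel it from both sides to obtain $v_m = u_m$ for $m \leq n+1$ and $v_m = 0$ otherwise, completing the computation of $Sq^m\alpha$.

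The statement for $\widetilde\alpha$ will then follow purely formally. By Corollary \ref{Thom3}, $\widetilde\alpha$ is the pullback of $\alpha$ under the map $H^{*,*'}(Cone(\pi)) \to H^{*,*'}(Cone(\widetilde\pi))$ induced by the cartesian square relating $\pi$ and $\widetilde\pi$. This pullback commutes with Steenrod squares and is linear over the pullback homomorphism $H(BSO_{n+1}) \to H(BSpin_{n+1})$, so applying it to the identity $Sq^m\alpha = u_m\alpha$ produced above yields at once $Sq^m\widetilde\alpha = u_m\widetilde\alpha$, where $u_m$ on the right-hand side is understood as its image in $H(BSpin_{n+1})$. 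The only delicate point in the whole argument is guaranteeing that $u_{n+1}$ is a non-zero-divisor so the final cancellation is legitimate, and this is immediate from the polynomial presentation in Theorem \ref{SubtleSW}; beyond that, the proof reduces to a bookkeeping application of the Wu formula and the naturality built into the diagram (\#), so I do not anticipate any serious obstacle.
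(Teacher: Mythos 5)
Your argument is correct and essentially the same as the paper's: both apply $f^*$, exploit its compatibility with Steenrod operations and its injectivity (equivalently, that $u_{n+1}$ is a non-zero-divisor in $H(BSO_{n+1})$), and evaluate $Sq^m u_{n+1}$ via the Wu formula. For $\widetilde{\alpha}$ you rightly pass through Corollary \ref{Thom3} and the naturality of the diagram (\#) rather than repeating the argument with $\widetilde{f}^*$ --- a sound choice, since $\widetilde{f}^*$ need not be injective.
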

	\begin{proof}
		We just notice that $f^*(Sq^m\alpha)=Sq^mf^*(\alpha)=Sq^mu_{n+1}=u_mu_{n+1}=u_mf^*(\alpha)=f^*(u_m\alpha)$. The result follows by injectivity of $f^*$. 
	\end{proof}
	
	\section{Some regular sequences in $H(BSO_n)$}
	
	The main aim of this section is to prove a result in the motivic setting similar to Theorem \ref{Q1}. We construct a sequence $\theta_0,\theta_1,\dots,\theta_{{k(n)}-1}$ in $H(BSO_n)$ by applying some Steenrod operations to $u_2$ just as in the topological case. Then, we focus on the two sequences obtained from the previous one by imposing on the one hand $\tau=1$ and on the other $\tau=0$. While the regularity of the first sequence was completely established by Quillen, nothing was known about the regularity of the second. We follow Quillen's method which allows to obtain the regularity of the sequence in topology by studying it in the cohomology of a certain power of $BO_1$ where it has an easier shape, related to some quadratic form over $\Z/2$. The lenght $k(n)$ of the regular sequence essentially depends on the characteristics of this quadratic form. For $\tau=0$, this approach does not work completely, so we study instead our sequence in the cohomology of a certain power of $BO_2$. In this ring our sequence has a simple form, related now to a certain bilinear form over $\Z/2$. As for the topological case, by studying these bilinear forms, we are able to get the regularity of some sequences of lenght $h(n)$ (related to our initial motivic sequences) with $\tau=0$. Surprisingly, these sequences are either long as Quillen's sequences or have one less element. Then, combining Quillen's result ($\tau=1$) with ours ($\tau=0$), we get the regularity of $\theta_0,\theta_1,\dots,\theta_{{k(n)}-1}$ in the motivic cohomology of $BSO_n$ for the same values that appear in topology.
	
	Let $V$ be an $n$-dimensional $\Z/2$-vector space and $\Omega$ an algebraically closed field extension of $\Z/2$. We denote by $V_{\Omega}$ the $\Omega$-vector space $\Omega \otimes_{\Z/2} V$. Note that the Frobenius automorphism acts on $V_{\Omega}$ via the first tensor factor. Following \cite{Q}, we also denote by $x \mapsto x^2$ the Frobenius transformation on $V_{\Omega}$. First, we recall the following result from \cite{Q}.
	
	\begin{prop}\label{stable}
		An $\Omega$-subspace $M$ of $V_{\Omega}$ is of the form $W_{\Omega}$ for some subspace $W$ of $V$ if and only if $M$ is stable under the Frobenius transformation.
	\end{prop}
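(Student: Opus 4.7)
The forward implication is immediate from the construction: the Frobenius $F$ acts on $V_\Omega = \Omega \otimes_{\Z/2} V$ by $\omega \otimes v \mapsto \omega^2 \otimes v$, so $F$ fixes $1 \otimes v$ for every $v \in V$, and any subspace of the form $W_\Omega$ with $W \subseteq V$ is automatically stable under it.

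For the converse I would identify $V$ with the $F$-fixed subspace of $V_\Omega$: a scalar $\omega \in \Omega$ satisfies $\omega^2 = \omega$ if and only if $\omega \in \Z/2$, so $(V_\Omega)^F = V$. Given an $F$-stable $\Omega$-subspace $M$, set $W := M \cap V$; the inclusion $W_\Omega \subseteq M$ is obvious, and one only needs to exhibit an $\Omega$-basis of $M$ lying in $V$. Choose any $\Z/2$-basis $e_1, \dots, e_n$ of $V$ and any $\Omega$-basis $m_1, \dots, m_d$ of $M$, and expand $m_i = \sum_j a_{ij} e_j$ into a $d \times n$ matrix $A$ of rank $d$. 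Since $F(e_j)=e_j$ and $F(m_i) \in M$, one gets a unique matrix $B \in GL_d(\Omega)$ (invertibility coming from the fact that $A^{(2)}$ also has rank $d$) with $A^{(2)} = BA$, where $A^{(2)}$ denotes entrywise squaring.

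Passing to a new basis $m_i' = \sum_k C_{ik} m_k$ with $C \in GL_d(\Omega)$ replaces $A$ by $CA$. The new basis consists of vectors in $V$ if and only if $(CA)^{(2)} = CA$, which, using $A^{(2)}=BA$ and the full row rank of $A$, is equivalent to the matrix equation $C = C^{(2)} B$. Existence of such an invertible $C$ is precisely what Lang's theorem guarantees for the connected algebraic group $GL_d$ over the algebraically closed field $\Omega$ of characteristic $2$: the Lang map $g \mapsto g^{-1} F(g)$ is surjective on $GL_d(\Omega)$, so one can solve $C^{-1}C^{(2)} = B^{-1}$, giving $C = C^{(2)} B$. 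Then $CA$ has entries in $\Z/2$, its rows furnish an $\Omega$-basis of $M$ contained in $V$, and hence $M = W_\Omega$.

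The main obstacle is the step producing $C$, i.e.\ invoking Lang's theorem (equivalently, the vanishing of the relevant non-abelian $H^1$ for the semilinear $F$-action on $GL_d$). The algebraic closedness of $\Omega$ is essential here; without it one can produce $F$-stable subspaces that are genuine twists and are not defined over $\Z/2$. Everything else in the argument is linear algebra over $\Omega$, together with the elementary observation that the Frobenius fixed field of $\Omega$ is exactly $\Z/2$.
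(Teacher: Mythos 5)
Your argument is correct in outline, and the paper itself gives no proof — the proposition is quoted verbatim from Quillen. Still, two points are worth flagging. First, your closing claim that algebraic closedness of $\Omega$ is essential is mistaken: the statement holds over \emph{any} field $\Omega$ of characteristic $2$, and there is a short elementary proof. Set $W = M \cap V$. Passing to $M/W_\Omega \subset (V/W)_\Omega$, one may assume $M \cap V = 0$. If $M \neq 0$, choose a nonzero $m = \sum_i \omega_i e_i \in M$ with minimal support with respect to a $\Z/2$-basis $e_1,\dots,e_n$ of $V$, and rescale so that one coefficient is $1$. Then $F(m)+m \in M$ has strictly smaller support, hence vanishes by minimality; this forces $\omega_i^2 = \omega_i$ for all $i$, so $m \in V \cap M = 0$, a contradiction. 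No cohomological vanishing is needed here: the only Frobenius-fixed scalars are $0$ and $1$ in any field of characteristic $2$, and that alone drives the descent.

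Second, as written your appeal to Lang's theorem has a gap. Lang--Steinberg is formulated over $\overline{\mathbb{F}_q}$, whereas $\Omega$ here may be much larger, for instance an algebraic closure of $\mathbb{F}_2(t)$. The argument can be repaired: the Lang map $g \mapsto g^{-1} g^{(2)}$ on $GL_d$ is the base change to $\Omega$ of an $\mathbb{F}_2$-morphism of schemes; Lang's theorem combined with Chevalley constructibility makes that morphism surjective as a morphism of schemes over $\overline{\mathbb{F}_2}$; and surjectivity of a finite-type morphism of $\Omega$-schemes is preserved under base change and is detected on $\Omega$-points when $\Omega$ is algebraically closed. So your route can be made rigorous, but it uses an input that is both stronger and less portable than the statement requires — the elementary argument is shorter and strictly more general, which is presumably why Quillen takes that route.
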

\begin{proof}
See \cite[Proposition 2.1]{Q}. 
\end{proof}
	
	Let $B$ be a bilinear form over $V$ and denote by ${^{\perp}V}$ its right radical, i.e.
	$${^{\perp}V}=\{y \in V: B(x,y)=0 \: for \: any \: x \in V\}.$$
	
	Note that $B(x,y)$ can be seen as a homogeneous element of degree $2$ in $\Omega[x_1,\dots,x_n,y_1,\dots,y_n]$. In fact, let $\{e_1,\dots,e_n\}$ be a basis for $V$, then $B(x,y)=\sum_{i,j=1}^n B(e_i,e_j)x_i y_j$ where the $x_i$ and the $y_j$ are the coordinates of $x$ and $y$ respectively in the chosen basis. Let $h=n-dim({^{\perp}V})$ and consider the ideal $J$ in $\Omega[x_1,\dots,x_n,y_1,\dots,y_n]$ generated by the homogeneous polynomials $B(x,y), B(x,y^2), \dots, B(x,y^{2^{h-1}})$.
	
	\begin{prop}
		The algebraic subset in $V_{\Omega} \times V_{\Omega}$ defined by the ideal $J$ is given by 
		$$Var(J)=\bigcup_{W \subseteq V} W_{\Omega}^{\perp} \times W_{\Omega}$$
		where $W_{\Omega}^{\perp}=\{x \in V_{\Omega}: B(x,y)=0 \: for \: any \: y \in W_{\Omega}\}$.
	\end{prop}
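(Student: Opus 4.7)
The plan is to prove the two inclusions separately. For the containment $\bigcup_{W \subseteq V} W_\Omega^\perp \times W_\Omega \subseteq Var(J)$, observe that by Proposition \ref{stable} every extension $W_\Omega$ of a $\Z/2$-subspace is stable under the coordinatewise Frobenius $y \mapsto y^2$, so if $b \in W_\Omega$ then $b^{2^i} \in W_\Omega$ for every $i \geq 0$; combined with $a \in W_\Omega^\perp$ this yields $B(a, b^{2^i}) = 0$ for all $i$, in particular for $i = 0, \dots, h-1$, placing $(a,b)$ in $Var(J)$.

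For the reverse inclusion, given $(a, b) \in Var(J)$, I would produce $W$ as follows: let $M$ be the smallest Frobenius-stable $\Omega$-subspace of $V_\Omega$ containing $b$, namely the $\Omega$-span of $\{b^{2^i} : i \geq 0\}$, and apply Proposition \ref{stable} to obtain a $\Z/2$-subspace $W \subseteq V$ with $W_\Omega = M$. Then $b \in W_\Omega$ by construction, and the remaining task is to verify $a \in W_\Omega^\perp$, which since $\{b^{2^i}\}_{i \geq 0}$ spans $W_\Omega$ reduces to showing $B(a, b^{2^i}) = 0$ for every $i \geq 0$, not merely for the range $i < h$ provided by hypothesis.

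The extension of the vanishing beyond $i < h$ is the key step, and I would carry it out in the quotient $U = V_\Omega / (rad_r(B))_\Omega$, which has $\Omega$-dimension $h$ and inherits a Frobenius-semilinear endomorphism $\bar F$ since $rad_r(B)$ is a $\Z/2$-subspace. Let $\bar b$ denote the image of $b$, and let $m'$ be the smallest integer for which $\bar b, \bar b^2, \dots, \bar b^{2^{m'}}$ become $\Omega$-linearly dependent; by dimension $m' \leq h$. Starting from a relation $\bar b^{2^{m'}} = \sum_{i < m'} c_i \bar b^{2^i}$ and applying $\bar F$, one gets $\bar b^{2^{m'+1}} = \sum_{i < m'} c_i^2 \bar b^{2^{i+1}}$; the top term $c_{m'-1}^2 \bar b^{2^{m'}}$ is pushed back into $\text{span}(\bar b, \dots, \bar b^{2^{m'-1}})$ via the original relation, and the argument iterates to show that every $\bar b^{2^j}$ with $j \geq 0$ lies in $\text{span}(\bar b, \dots, \bar b^{2^{h-1}})$.

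Lifting back to $V_\Omega$, every $b^{2^i}$ can be written as $\sum_{j=0}^{h-1} c_j b^{2^j} + r$ with $r \in (rad_r(B))_\Omega$, and applying $B(a, -)$ kills the first $h$ terms by hypothesis together with $B(a, r) = 0$ since $r$ sits in the right radical; thus $a \in W_\Omega^\perp$. The main obstacle to anticipate is exactly the control of the Frobenius orbit of $\bar b$ inside the $h$-dimensional quotient $U$: this is what explains why the particular truncation at $i = h-1$ in the definition of $J$ gives the full union $\bigcup_W W_\Omega^\perp \times W_\Omega$ rather than something strictly larger.
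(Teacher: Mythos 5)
Your proof is correct and follows essentially the same route as the paper: both establish the easy inclusion via Frobenius-stability of $W_{\Omega}$ (Proposition \ref{stable}), and both prove the reverse inclusion by constructing a $\Z/2$-rational subspace out of the Frobenius orbit of the second coordinate, passing to the quotient $V_{\Omega}/\mathrm{rad}_r(B)_{\Omega}$ of dimension $h$ to control that orbit. The packaging differs slightly: the paper works with $M_y=\langle y,\dots,y^{2^{h-1}}\rangle+\mathrm{rad}_r(B)_{\Omega}$ and shows directly that $M_y$ is Frobenius-stable, whereas you take $W_{\Omega}$ to be the full span of $\{b^{2^i}\}_{i\geq 0}$ and reduce to showing $B(a,b^{2^i})=0$ for every $i$; these are equivalent and both yield the desired pair $W_{\Omega}^{\perp}\times W_{\Omega}$. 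One thing you do more carefully: the paper argues by a case split on the least $i\leq h$ with $y^{2^i}\in\mathrm{rad}_r(B)_{\Omega}$, and in the case $i=h$ simply asserts a direct-sum decomposition $M_y=\langle y,\dots,y^{2^{h-1}}\rangle\oplus\mathrm{rad}_r(B)_{\Omega}$ to force $M_y=V_{\Omega}$; this glosses over the possibility that the images $\bar y,\bar y^2,\dots$ become $\Omega$-linearly dependent without any single power falling into the radical. Your explicit iteration of the first linear-dependence relation in the quotient is precisely the argument that closes that gap and shows $\bar b^{2^j}\in\mathrm{span}(\bar b,\dots,\bar b^{2^{h-1}})$ for all $j$, which is the content both proofs really need.
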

	\begin{proof}
		From Proposition \ref{stable} we know that $W_{\Omega}$ is stable under the Frobenius transformation for any subspace $W$ of $V$. Hence, if $(x,y)$ belongs to $W_{\Omega}^{\perp} \times W_{\Omega}$, then $y, y^2, \dots, y^{2^{h-1}}$ are in $W_{\Omega}$ and $x \in W_{\Omega}^{\perp}$. It follows that $B(x,y), B(x,y^2), \dots, B(x,y^{2^{h-1}})$ are all zero, so $(x,y) \in Var(J)$. Therefore, 
		$$\bigcup_{W \subseteq V} W_{\Omega}^{\perp} \times W_{\Omega} \subseteq Var(J).$$
		
		On the other hand, let $(x,y)$ be a point of $Var(J)$ and consider the subspace $M_y$ of $V_{\Omega}$ defined by 
		$$M_y=\langle y,y^2,\dots,y^{2^{h-1}}\rangle   + {^{\perp}V}_{\Omega}.$$
		
		Obviously, $(x,y)$ belongs to $M_y^{\perp} \times M_y$. In order to prove that $M_y$ is of the form $W_{\Omega}$ for some $W \subseteq V$ it is enough to show that $M_y$ is stable under the Frobenius transformation. Note that ${^{\perp}V}_{\Omega}$ is stable under the Frobenius transformation, and so, if $y^{2^i} \in {^{\perp}V}_{\Omega}$ for some $i$, then $y^{2^j} \in {^{\perp}V}_{\Omega}$ for all $j \geq i$. Hence, $M_y/ {^{\perp}V}_{\Omega}=\langle y,y^2,\dots,y^{2^{i-1}}\rangle  $ and $\langle y^{2^i},\dots,y^{2^{h-1}}\rangle   \subseteq {^{\perp}V}_{\Omega}$ for some $i \leq h$. If $i<h$, then $M_y$ is stable under the Frobenius transformation, since ${^{\perp}V}_{\Omega}$ is so. If $i=h$, then $M_y=\langle y,y^2,\dots,y^{2^{h-1}}\rangle   \oplus {^{\perp}V}_{\Omega}$. Therefore, if $y,y^2,\dots ,y^{2^{h-1}}$ are linearly independent then $M_y=V_{\Omega}$ since $dim({^{\perp}V})=n-h$, so $y^{2^h}$ clearly belongs to $M_y$. Otherwise, $y^{2^i} \in \langle y,\dots,y^{2^{i-1}}\rangle  $ for some $0 \leq i \leq h-1$, from which it follows that $y^{2^h} \in \langle y,\dots,y^{2^{h-1}}\rangle  $. Hence, $M_y$ is stable under the Frobenius transformation from which we deduce that
		$$Var(J) \subseteq \bigcup_{W \subseteq V} W_{\Omega}^{\perp} \times W_{\Omega}$$
		which completes the proof. 
	\end{proof}
	
	From the previous proposition we immediately obtain the following result.
	
	\begin{cor}\label{reg}
		The sequence $B(x,y), B(x,y^2), \dots, B(x,y^{2^{h-1}})$ is a regular sequence in the polynomial ring $\Z/2[x_1,\dots,x_n,y_1,\dots,y_n]$.
	\end{cor}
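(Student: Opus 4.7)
The plan is to convert the regularity statement into a statement about the height of the ideal $J$, and then read off the height directly from the geometric description of $Var(J)$ given by the previous proposition.

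First, I would extend scalars to the algebraically closed field $\Omega$: since $\Omega$ is faithfully flat over $\Z/2$, the sequence $B(x,y), B(x,y^2), \dots, B(x,y^{2^{h-1}})$ is regular in $\Z/2[x_1,\dots,x_n,y_1,\dots,y_n]$ if and only if it is regular in $\Omega[x_1,\dots,x_n,y_1,\dots,y_n]$. This latter polynomial ring is Cohen--Macaulay, so a homogeneous sequence of $h$ elements is regular if and only if the ideal $J$ it generates has height exactly $h$. Since the height is automatically at most $h$ by the Krull principal ideal theorem, it suffices to check that $\operatorname{ht}(J) \geq h$, equivalently that $\dim Var(J) \leq 2n - h$.

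For this final step I would appeal to the previous proposition, which identifies
\[
Var(J) = \bigcup_{W \subseteq V} W_\Omega^\perp \times W_\Omega.
\]
Because $V$ is a finite-dimensional $\Z/2$-vector space, this is a \emph{finite} union, so its dimension equals the maximum of $\dim(W_\Omega^\perp) + \dim W$ as $W$ ranges over subspaces of $V$. A direct rank--nullity computation applied to the pairing $V_\Omega \times W_\Omega \to \Omega$ gives $\dim W_\Omega^\perp = n - \dim W + \dim(W \cap rad_r(B))$, and hence each summand simplifies to $n + \dim(W \cap rad_r(B))$. Since $\dim rad_r(B) = n - h$, the maximum is achieved exactly when $W \supseteq rad_r(B)$, giving $\dim Var(J) = 2n - h$, as required.

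There is no real obstacle here; the content of the corollary lies entirely in the geometric description of $Var(J)$ provided by the previous proposition. The only points that need to be handled carefully are the reduction to the algebraic closure (to legitimately interpret $Var(J)$ as a variety and equate $\operatorname{ht}(J)$ with its codimension) and the use of the Cohen--Macaulay property of the polynomial ring to characterise regular sequences by their height; both are entirely standard.
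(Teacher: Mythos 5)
Your proof is correct and follows essentially the same route as the paper: reduce to $\Omega$, convert regularity to the bound $\dim Var(J)\le 2n-h$, and compute that dimension from the explicit description $Var(J)=\bigcup_W W_\Omega^\perp\times W_\Omega$. The only cosmetic differences are that you re-derive the codimension criterion from Cohen--Macaulayness rather than citing Quillen's Proposition~1.1, and your rank--nullity count $\dim W_\Omega^\perp+\dim W_\Omega = n+\dim(W\cap rad_r(B))$ is a tidier version of the paper's four-term bound.
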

	\begin{proof}
		Recall that $r_1,\dots,r_h$ is a regular sequence in the polynomial ring  $\Omega[x_1,\dots,x_n,y_1,\dots,y_n]$ if and only if $dim(Var(r_1,\dots,r_h)) \leq 2n-h$ (see \cite[Proposition 1.1]{Q}). Hence, in order to prove the result it is enough to look at the dimension of $Var(J)$. In fact,
		\begin{align*}
			dim(Var(J))&=max_{W \subseteq V}\{dim(W_{\Omega}^{\perp})+dim(W_{\Omega})\}\\
			&=max_{W \subseteq V}\{dim(W_{\Omega}^{\perp}/(W_{\Omega}^{\perp} \cap {^{\perp}V}_{\Omega}))+ dim(W_{\Omega}^{\perp} \cap {^{\perp}V}_{\Omega})\\
			&+dim(W_{\Omega}/(W_{\Omega} \cap {^{\perp}V}_{\Omega}))+dim(W_{\Omega} \cap {^{\perp}V}_{\Omega})\}\\
			&\leq n-dim({^{\perp}V})+2dim({^{\perp}V})=2n-h
		\end{align*}
		since on $V_{\Omega}/{^{\perp}V}_{\Omega}$ the bilinear form is non-degenerate and $dim(^\perp V)=n-h$, therefore
		$$dim(W_{\Omega}/(W_{\Omega} \cap {^{\perp}V}_{\Omega}))+dim(W_{\Omega}^{\perp}/(W_{\Omega}^{\perp} \cap {^{\perp}V}_{\Omega}))=dim(V_{\Omega}/{^{\perp}V}_{\Omega})=n-dim({^{\perp}V}).$$
		
		Hence, the sequence is regular in $\Omega[x_1,\dots,x_n,y_1,\dots,y_n]$, and so in $\Z/2[x_1,\dots,x_n,y_1,\dots,y_n]$. 
	\end{proof}
	
	At this point, consider the standard embeddings $O_2^{\times m} \hookrightarrow O_{2m}$ and $O_2^{\times m} \times O_1 \hookrightarrow O_{2m+1}$, which induce respectively the ring homomorphisms compatible with Steenrod operations
	$$\alpha_{2m}:H(BO_{2m}) \cong H[u_1,\dots,u_{2m}] \rightarrow H(BO_2)^{\otimes m} \cong H[x_1,y_1,\dots,x_m,y_m]$$
	and 
	$$\alpha_{2m+1}:H(BO_{2m+1}) \cong H[u_1,\dots,u_{2m+1}] \rightarrow H(BO_2)^{\otimes m} \otimes_H H(BO_1) \cong H[x_1,y_1,\dots,x_m,y_m,x_{m+1}]$$
	where $x_i$ is in bidegree $(0)[1]$ and $y_i$ is in bidegree $(1)[2]$ for any $i$. By tensoring them with $\Z/2$ over $H$, one obtains the ring homomorphisms
	$$\beta_{2m}:\Z/2[u_1,\dots,u_{2m}] \rightarrow \Z/2[x_1,y_1,\dots,x_m,y_m]$$
	and
	$$\beta_{2m+1}:\Z/2[u_1,\dots,u_{2m+1}] \rightarrow \Z/2[x_1,y_1,\dots,x_m,y_m,x_{m+1}].$$
	
	Let $S_n$ be the polynomial ring $\Z/2[u_1,\dots,u_n]$ and $R_n$ be $\Z/2[x_1,y_1,\dots,x_m,y_m]$, if $n=2m$, and $\Z/2[x_1,y_1,\dots,x_m,y_m,x_{m+1}]$, if $n=2m+1$. There exist commutative diagrams
	
	$$
	\xymatrix{
		H(BO_{2m}) \ar@{->}[r]^(0.47){\alpha_{2m}} \ar@{->}[d]_{\gamma_{2m}} & H(BO_2)^{\otimes m} \ar@{->}[d]^{\delta_{2m}}\\
		S_{2m} \ar@{->}[r]_{\beta_{2m}} & R_{2m}
	}
	\hspace{1,5cm}
	\xymatrix{
		H(BO_{2m+1}) \ar@{->}[r]^(0.38){\alpha_{2m+1}} \ar@{->}[d]_{\gamma_{2m+1}} & H(BO_2)^{\otimes m} \otimes H(BO_1) \ar@{->}[d]^{\delta_{2m+1}}\\
		S_{2m+1} \ar@{->}[r]_{\beta_{2m+1}} & R_{2m+1}
	}
	$$
	where $\gamma_n$ and $\delta_n$ are the respective reduction maps along $H \rightarrow \Z/2$. By Whitney sum formula (see \cite[Proposition 3.1.13]{SV}) and since $\tau$ is killed in $R_{2m}$, we have that
	$$\beta_{2m}(u_{2j})=\sigma_j(y_1,\dots,y_m)$$ 
	and 
	$$\beta_{2m}(u_{2j+1})=\sum_{i=1}^m x_i\sigma_j(y_1,\dots,y_{i-1},y_{i+1},\dots,y_m)$$ 
	where $\sigma_j$ is the $j$-th elementary symmetric polynomial. Similar formulas hold in $R_{2m+1}$, i.e. we have that 
	$$\beta_{2m+1}(u_{2j})=\sigma_j(y_1,\dots,y_m)$$ 
	and 
	$$\beta_{2m+1}(u_{2j+1})=\sum_{i=1}^m x_i\sigma_j(y_1,\dots,y_{i-1},y_{i+1},\dots,y_m)+x_{m+1}\sigma_j(y_1,\dots,y_m).$$
	
	Before proceeding we need the following technical lemma on regular sequences.
	
	\begin{lem}\label{tech}
		Let $f:A=\Z/2[a_1,\dots,a_m] \rightarrow B=\Z/2[b_1,\dots,b_n]$ be a ring homomorphism, where $deg(b_i)=1$ for any $i$ and $f(a_j)$ is a homogeneous polynomial in $B$ of positive degree $\alpha_j$ for any $j$. Moreover, let $r_1,\dots,r_k$ be a sequence of elements of $A$. If $f(r_1),\dots,f(r_k)$ is a regular sequence in $B$, then $r_1,\dots,r_k$ is a regular sequence in $A$.
	\end{lem}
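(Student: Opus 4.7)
The plan is to endow $A$ with the grading $\deg(a_j) = \alpha_j$, under which $f$ becomes a graded ring homomorphism, since $\deg(b_i) = 1$ in $B$ and each $f(a_j)$ is homogeneous of degree $\alpha_j$ there. The argument then proceeds by induction on $k$.

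For the base case $k = 1$, I would note that $f(r_1)$ being a non-zerodivisor in $B$ forces $f(r_1) \neq 0$ and hence $r_1 \neq 0$; since $A$ is an integral domain, $r_1$ is automatically a non-zerodivisor. For the inductive step, assuming the result for length $k-1$, the initial segment $f(r_1),\dots,f(r_{k-1})$ is regular in $B$, so the inductive hypothesis gives that $r_1,\dots,r_{k-1}$ is regular in $A$. Setting $I = (r_1,\dots,r_{k-1})A$, it remains to show that $r_k$ is a non-zerodivisor on $A/I$. Suppose $s \cdot r_k \in I$; applying $f$ and using that $f(r_k)$ is a non-zerodivisor on $B/f(I)B$ in $B$ yields $f(s) \in f(I)B$, so one can write $f(s) = \sum_{j < k} b_j\, f(r_j)$ with $b_j \in B$.

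The hard part will be to descend this relation from $B$ back to $A$, namely to conclude $s \in I$ from $f(s) \in f(I)B$: this does not follow from injectivity of $f$ alone, which may fail in general. The plan is to exploit the graded structure. First, decomposing $s$ into its homogeneous components under the $\alpha$-grading on $A$ reduces the problem to the case where $s$ is homogeneous, and by graded compatibility the $b_j$ may then be chosen homogeneous of the matching degrees in the standard grading of $B$. Second, using the explicit shape of each $f(a_\ell)$ as a fixed homogeneous polynomial in the $b_i$'s, I would construct the desired coefficients $a_j \in A$ by a degree-by-degree term-matching procedure, assembling an expression $s = \sum_{j<k} a_j\, r_j$ and hence witnessing $s \in I$. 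The success of this lifting step hinges on the polynomial structure of both $A$ and $B$, together with the fact that, by the inductive hypothesis, the ideal $(r_1,\ldots,r_{k-1})A$ already has the expected graded behaviour of an ideal generated by a regular sequence, giving exactly the room needed to realize the $A$-lifts.
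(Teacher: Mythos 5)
The reduction you perform — applying $f$, invoking regularity of $f(r_k)$ on $B/f(I)B$ to get $f(s)\in f(I)B$, and then trying to conclude $s\in I$ — is exactly where the real difficulty sits, and your sketch for that final descent does not work. You acknowledge that $f$ may fail to be injective, but the "degree-by-degree term-matching" you propose cannot repair that: if $s$ is a nonzero homogeneous element of $\ker f$, then $f(s)=0\in f(I)B$ trivially, and no amount of matching terms in $B$ will produce coefficients $a_j\in A$ with $s=\sum_j a_j r_j$, because there is nothing in $B$ to match against. For instance with $A=\Z/2[a_1,a_2,a_3]$, $B=\Z/2[b_1,b_2]$, $f(a_1)=b_1$, $f(a_2)=b_2$, $f(a_3)=b_1+b_2$, and $s=a_1+a_2+a_3$, one has $f(s)=0\in(b_1)$ but $s\notin(a_1)$. (This $s$ does not satisfy $s\,r_k\in I$, so it does not disprove the lemma, but it shows your intermediate implication is false in general, and you give no argument that it holds for the $s$ you actually need.) What is missing is a genuine flatness or freeness property of the ring map; $f:A\to B$ itself has neither.

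The paper gets around this by replacing $f$ with a map that \emph{is} free. It introduces $C=\Z/2[b_1,\dots,b_n,c_1,\dots,c_m]$ and sets $g:A\to C$, $g(a_j)=f(a_j)+c_j^{\alpha_j}$, together with the retraction $h:C\to B$ killing the $c_j$'s, so $f=hg$. The sequence $b_1,\dots,b_n,g(a_1),\dots,g(a_m)$ is regular in $C$, hence so is $g(a_1),\dots,g(a_m)$ after permutation, and by Hartshorne's result this makes $C$ a \emph{free} $A$-module via $g$. One then checks that $g(r_1),\dots,g(r_k)$ is regular in $C$ (via $c_1,\dots,c_m,f(r_1),\dots,f(r_k)$ and the fact that $g(r_j)\equiv f(r_j)\bmod (c_1,\dots,c_m)$), and descent of regular sequences along a faithfully flat (here free) map gives regularity of $r_1,\dots,r_k$ in $A$. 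If you want to salvage your outline, the essential missing idea is precisely this enlargement of the target and the resulting freeness; without it, the lifting step has no foundation.
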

	\begin{proof}
		Let $C$ be the polynomial ring $\Z/2[b_1,\dots,b_n,c_1,\dots,c_m]$, with $deg(c_i)=1$ for any $i$. Define the ring homomorphisms $i:B \rightarrow C$, $h:C \rightarrow B$ and $g:A \rightarrow C$ by $i(b_j)=b_j$, $h(b_j)=b_j$ and $h(c_j)=0$, and $g(a_j)=if(a_j)+c_j^{\alpha_j}$. Note that $hi=id_B$, $f=hg$ and $g(a_j)$ is homogeneous in $C$ for any $j$. The sequence $b_1,\dots,b_n,g(a_1),\dots,g(a_m)$ is regular in $C$, so it is $g(a_1),\dots,g(a_m)$ since regular sequences of homogeneous elements of positive degree permute (see for example \cite[Corollary 17.2]{E}). From \cite[Proposition 1]{H} it follows that $C$ is a free $A$-module. At this point, note that $c_1,\dots,c_m,if(r_1),\dots,if(r_k)$ is a regular sequence in $C$ essentially by hypothesis. Hence, the sequence $g(r_1),\dots,g(r_k)$ is regular in $C$, since $g(r_j)+if(r_j) \in ker(h)=(c_1,\dots,c_m)$ for any $j$. The fact that $C$ is a free $A$-module via $g$ implies that $r_1,\dots,r_k$ is regular in $A$, which is what we aimed to show. 
	\end{proof}
	
	\begin{thm}\label{seq}
		The sequence $\gamma_n(u_1),\gamma_n(u_2),\gamma_n(Sq^1u_2),\dots,\gamma_n(Sq^{2^{h(n)-2}} Sq^{2^{h(n)-3}}\cdots Sq^1u_2)$ is regular in $S_n$, where $h(n)$ depends on $n$ as in the following table.
		\begin{center}
			\begin{tabular}{|c|c|}
				\hline
				\textbf{n} &\textbf{h(n)}\\
				\hline
				8l+1 &4l\\
				\hline
				8l+2 &4l+1\\
				\hline
				8l+3 &4l+1\\
				\hline
				8l+4 &4l+1\\
				\hline
				8l+5 &4l+2\\ 
				\hline
				8l+6 &4l+3\\ 
				\hline
				8l+7 &4l+3\\ 
				\hline
				8l+8 &4l+3\\ 
				\hline
			\end{tabular}
		\end{center}
	\end{thm}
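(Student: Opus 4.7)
The approach is to combine Lemma \ref{tech} with a bilinear form argument in the spirit of Corollary \ref{reg}. Since the homomorphism $\beta_n\gamma_n\colon S_n\to R_n$ sends each $u_i$ to a nonzero homogeneous polynomial of positive degree, Lemma \ref{tech} reduces the claim to proving regularity of the image sequence in $R_n$. By the Whitney sum formula and compatibility of $\beta_n$ with Steenrod operations (using the Cartan formula together with $Sq^1 y_i=x_iy_i$, $Sq^2 y_i=y_i^2$, and $Sq^j y_i=0$ for $j>2$), an induction on $j$ gives
$$\beta_n(Sq^{2^{j-1}}\cdots Sq^1 u_2)=\sum_{i=1}^m\sum_{c=0}^{j-1}x_i^{2^j+1-2^{c+1}}y_i^{2^c},$$
together with $\beta_n(u_1)=x_1+\cdots+x_m$ (plus $x_{m+1}$ in the odd case) and $\beta_n(u_2)=y_1+\cdots+y_m$. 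The \emph{leading} ($c=0$) contribution is the bilinear piece $\sum_i x_i^{2^j-1}y_i$, which encodes the essential data.

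Next, I would apply Quillen's dimension criterion (Proposition~1.1 of \cite{Q}): the sequence is regular in $R_n$ iff the common zero locus $\operatorname{Var}(I)\subseteq\mathbb{A}^{\dim R_n}_\Omega$ has dimension at most $\dim R_n-(h+1)$. Following the strategy used in the proof of Corollary \ref{reg}, I would attach to the collection of polynomials a bilinear form $B$ on $V=\mathbb{F}_2^m$ (or $\mathbb{F}_2^{m+1}$, with an additional coordinate for $x_{m+1}$) that captures the leading terms, and show that $\operatorname{Var}(I)\subseteq\bigcup_W W_\Omega^\perp\times W_\Omega$, with $W$ ranging over Frobenius-stable subspaces of $V_\Omega$. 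The higher $c$-contributions $x_i^{2^j+1-2^{c+1}}y_i^{2^c}$ are then absorbed by Frobenius stability, since $y_i^{2^c}\in W_\Omega$ and the requisite powers of the $x_i$'s lie in $W_\Omega^\perp$. The same codimension estimate as in Corollary \ref{reg} then yields the required bound.

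The main obstacle is the residue-class analysis pinning down the precise value of $h$: it equals $n-\dim\operatorname{rad}_r(B)$ for the bilinear form $B$ arising here, and each of the eight residues of $n\bmod 8$ requires a separate computation. This also explains the discrepancy with Quillen's Theorem \ref{Q1}: when $\tau=0$ one is forced to use $BO_2$-factors rather than the $BO_1$-factors available to Quillen, so the governing form is bilinear instead of quadratic, and this shortens the sequence by one unit precisely in the residues $n\equiv 3,4,5\pmod 8$. A quick sanity check in the case $n=4$, $m=2$ already illustrates the phenomenon: $\beta_n(Sq^1u_2)=x_1y_1+x_2y_2$ becomes $0$ modulo $(\beta_n(u_1),\beta_n(u_2))=(x_1+x_2,\,y_1+y_2)$, so the sequence stops at length $h+1=2$, matching the tabulated $h=1$. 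Carrying out this bookkeeping uniformly across all eight residue classes is the heart of the proof.
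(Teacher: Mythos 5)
Your overall strategy matches the paper's: reduce via Lemma~\ref{tech} to checking regularity of the image under $\beta_n$ in $R_n$, identify the resulting polynomials with a Frobenius-twisted bilinear form, and apply the dimension criterion in the spirit of Corollary~\ref{reg}. But your key intermediate formula is wrong, and the error is the very $\tau$-dependence that the whole computation is sensitive to. You claim
\[
\beta_n(Sq^{2^{j-1}}\cdots Sq^1 u_2)=\sum_{i=1}^m\sum_{c=0}^{j-1}x_i^{2^j+1-2^{c+1}}y_i^{2^c},
\]
but this is the \emph{topological} (or $\tau=1$) Steenrod expansion. In $R_n$ the element $\tau$ is killed, and in the motivic Cartan formula the odd-odd cross terms come with a factor of $\tau$, so they vanish. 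A direct induction (which the paper carries out) gives
\[
\delta_n\bigl(Sq^{2^{l}}\cdots Sq^1 y_i\bigr)=x_i\,y_i^{2^{l}}
\]
with no extra terms. For example, $Sq^2(x_iy_i)=x_iy_i^2+\tau\,x_i^3y_i$, so modulo $\tau$ the term $x_i^3y_i$ you keep is absent. Your own sanity check at $n=4$ only tests $Sq^1u_2$, where the two formulas agree, so it does not detect the discrepancy.

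This is not a cosmetic difference: the correct formula $\sum_i x_iy_i^{2^l}$ is genuinely linear in the $x$-variables and Frobenius-power in $y$, which—after reducing modulo $\beta_n\gamma_n(u_1)$ and $\beta_n\gamma_n(u_2)$—is exactly the shape $B(x,y^{2^l})$ required by Corollary~\ref{reg}, with $B$ a symmetric bilinear form on a $\mathbb{Z}/2$-vector space of dimension $m-1$ (even case) or $m$ (odd case). Your formula instead has terms of high degree in $x_i$, and the ``absorbed by Frobenius stability'' step you propose is not justified: the containment $\operatorname{Var}(J)\subseteq\bigcup_W W_\Omega^\perp\times W_\Omega$ in the proposition preceding Corollary~\ref{reg} is proved specifically for the sequence $B(x,y),B(x,y^2),\dots$ and does not automatically extend to your modified polynomials. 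Once you correct the Cartan computation to account for $\tau=0$, the extra terms disappear, the argument reduces to computing $\dim\operatorname{rad}_r(B)$ in the even and odd cases, and the eight residue classes come out as in the table; this is precisely what the paper does.
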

	\begin{proof}
		By Lemma \ref{tech} we can check the regularity of the needed sequence by looking at its image under $\beta_n$. Indeed, we will show the regularity of the sequence
		$$\beta_n\gamma_n(u_1),\beta_n\gamma_n(u_2),\beta_n\gamma_n(Sq^1u_2),\dots,\beta_n\gamma_n(Sq^{2^{h(n)-2}} Sq^{2^{h(n)-3}}\cdots Sq^1u_2).$$
		
		First, consider the case $n=2m$. Then, $\beta_{2m}\gamma_{2m}(u_1)=\beta_{2m}(u_1)=\sum_{i=1}^m x_i$ and $\beta_{2m}\gamma_{2m}(u_2)=\beta_{2m}(u_2)=\sum_{i=1}^m y_i$. Moreover, since $\tau$ is killed in $R_{2m}$, we have that
		\begin{align*}
			\beta_{2m}\gamma_{2m}(Sq^{2^l}\cdots Sq^1u_2)&=\delta_{2m}\alpha_{2m}(Sq^{2^l}\cdots Sq^1u_2)=\delta_{2m}(Sq^{2^l}\cdots Sq^1\alpha_{2m}(u_2))\\
			&=\sum_{i=1}^m \delta_{2m}(Sq^{2^l}\cdots Sq^2Sq^1y_i)=\sum_{i=1}^m \delta_{2m}(Sq^{2^l}\cdots Sq^2(x_iy_i))=\sum_{i=1}^m x_iy_i^{2^l}.
		\end{align*}
		
		Modulo $\beta_{2m}\gamma_{2m}(u_1)$ and $\beta_{2m}\gamma_{2m}(u_2)$, $\beta_{2m}\gamma_{2m}(Sq^{2^l}\cdots Sq^1u_2)=B(x,y^{2^l})$, where $B$ is the bilinear form over an $m-1$-dimensional $\Z/2$-vector space $V$ defined by $B(x,y)=\sum_{i \neq j=1}^{m-1} x_iy_j$. Note that 
		$$dim({^{\perp}V})=
		\begin{cases}
		0, & if \: m \: is \: odd \\
		1, & if \: m \: is \: even
		\end{cases}.
		$$
		
		In fact, from $B(e_i,y)=0$ it follows that $y_1+\dots+y_{i-1}+y_{i+1}+ \dots +y_{m-1}=0$ for any $i \leq m-1$, where $e_i$ is the vector in $V$ which has coordinates which are all $0$ but the $i$-th that is a $1$. Hence, 
		$${^{\perp}V}=
		\begin{cases}
		0, & if \: m \: is \: odd \\
		\langle (1,\dots,1)\rangle  , & if \: m \: is \: even
		\end{cases}.
		$$
		
		Corollary \ref{reg} implies that the sequence
		$$\beta_{2m}\gamma_{2m}(u_1),\beta_{2m}\gamma_{2m}(u_2),\beta_{2m}\gamma_{2m}(Sq^1u_2),\dots,\beta_{2m}\gamma_{2m}(Sq^{2^{h(2m)-2}} Sq^{2^{h(2m)-3}}\cdots Sq^1u_2)$$ 
		is regular in $R_{2m}$ where 
		$$h(2m)=
		\begin{cases}
		m, & if \: m \: is \: odd \\
		m-1, & if \: m \: is \: even
		\end{cases}.
		$$
		
		Therefore, the sequence
		$$\gamma_{2m}(u_1),\gamma_{2m}(u_2),\gamma_{2m}(Sq^1u_2),\dots,\gamma_{2m}(Sq^{2^{h(2m)-2}} Sq^{2^{h(2m)-3}}\cdots Sq^1u_2)$$ 
		is regular in $S_{2m}$ for the same values of $h(2m)$.
		
		Now, consider the case $n=2m+1$. Similarly to the previous case, $\beta_{2m+1}\gamma_{2m+1}(u_1)=\beta_{2m+1}(u_1)=\sum_{i=1}^{m+1} x_i$ and $\beta_{2m+1}\gamma_{2m+1}(u_2)=\beta_{2m+1}(u_2)=\sum_{i=1}^m y_i$. Moreover, we have that
		\begin{align*}
			\beta_{2m+1}\gamma_{2m+1}(Sq^{2^l}\cdots Sq^1u_2)&=\delta_{2m+1}\alpha_{2m+1}(Sq^{2^l}\cdots Sq^1u_2)=\delta_{2m+1}(Sq^{2^l}\cdots Sq^1\alpha_{2m+1}(u_2))\\
			&=\sum_{i=1}^m \delta_{2m+1}(Sq^{2^l}\cdots Sq^2Sq^1y_i)=\sum_{i=1}^m \delta_{2m+1}(Sq^{2^l}\cdots Sq^2(x_iy_i))=\sum_{i=1}^m x_iy_i^{2^l}.
		\end{align*}
		
		Modulo $\beta_{2m+1}\gamma_{2m+1}(u_1)$ and $\beta_{2m+1}\gamma_{2m+1}(u_2)$, $\beta_{2m+1}\gamma_{2m+1}(Sq^{2^l}\cdots Sq^1u_2)=B(x,y^{2^l})$, where $B$ is the bilinear form over an $m$-dimensional $\Z/2$-vector space $V$ defined by $B(x,y)=\sum_{i=1}^{m-1} (x_i+x_m)y_i $. In this case, $dim({^{\perp}V})=1$. In fact, from $B(e_i,y)=0$ it follows that $y_i=0$ for any $i \leq m-1$. Hence, 
		${^{\perp}V}=\langle (0,\dots,0,1)\rangle  $, from which it follows by Corollary \ref{reg} that the sequence
		$$\beta_{2m+1}\gamma_{2m+1}(u_1),\beta_{2m+1}\gamma_{2m+1}(u_2),\beta_{2m+1}\gamma_{2m+1}(Sq^1u_2),\dots,\beta_{2m+1}\gamma_{2m+1}(Sq^{2^{h(2m+1)-2}} Sq^{2^{h(2m+1)-3}}\cdots Sq^1u_2)$$ 
		is regular in $R_{2m+1}$ where $h(2m+1)=m$. Therefore
		$$\gamma_{2m+1}(u_1),\gamma_{2m+1}(u_2),\gamma_{2m+1}(Sq^1u_2),\dots,\gamma_{2m+1}(Sq^{2^{h(2m+1)-2}} Sq^{2^{h(2m+1)-3}}\cdots Sq^1u_2)$$ 
		is a regular sequence in $S_{2m+1}$, where $h(2m+1)=m$. This completes the proof. 
	\end{proof}
	
	Define the elements $\theta_j$ in $H(BSO_n)$ inductively by the following formulas:
	$$\theta_0=u_2;$$
	$$\theta_{j+1}=Sq^{2^j}\theta_j.$$
	
	\begin{cor}\label{tauseq}
		The sequence $\tau,\theta_0,\dots,\theta_{h(n)-1}$ is regular in $H(BSO_n)$, where $h(n)$ depends on $n$ as in the table of Theorem \ref{seq}.
	\end{cor}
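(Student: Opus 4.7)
The plan is to reduce the problem to Theorem \ref{seq} by first working modulo $\tau$. Since $H(BSO_n) = K^M(k)/2[\tau, u_2, \ldots, u_n]$ is a polynomial algebra in which $\tau$ is a variable, $\tau$ is automatically a non-zero-divisor, so the question reduces to showing that $\theta_0, \ldots, \theta_{h-1}$ is regular in $H(BSO_n)/(\tau) = K^M(k)/2[u_2, \ldots, u_n]$.

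Next I would pin down the reductions $\bar\theta_j := \theta_j \bmod \tau$ explicitly. The Wu formula of Proposition \ref{Wu} has $\Z/2$ coefficients, and because $\rho=0$ under our standing hypothesis $\sqrt{-1} \in k$, the motivic Cartan formula contributes only coefficients in $\Z/2[\tau] \subset H$. A simple induction on $j$ then shows each $\theta_j$ is a polynomial in $u_2, \ldots, u_n$ with coefficients in $\Z/2[\tau]$, hence $\bar\theta_j$ lives in the subring $\Z/2[u_2, \ldots, u_n] \subset K^M(k)/2[u_2, \ldots, u_n]$. Comparing with the construction of $\gamma_n$, I would identify $\bar\theta_j$ with the image of $\gamma_n(\theta_j)$ under the quotient $S_n \twoheadrightarrow S_n/(u_1) = \Z/2[u_2, \ldots, u_n]$.

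Theorem \ref{seq} gives regularity of $\gamma_n(u_1), \gamma_n(\theta_0), \ldots, \gamma_n(\theta_{h-1})$ in $S_n$; by definition of regularity, modding out the leading element $u_1$ leaves $\bar\theta_0, \ldots, \bar\theta_{h-1}$ regular in $\Z/2[u_2, \ldots, u_n]$. To transfer this to $K^M(k)/2[u_2, \ldots, u_n]$, I would invoke that $K^M(k)/2$ is a $\Z/2$-vector space, so the coefficient extension is free (hence faithfully flat), and flat base change preserves regular sequences. Combining this with the regularity of $\tau$ concludes the corollary.

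The only delicate point is the first claim, namely that $\bar\theta_j$ genuinely sits in the $\Z/2$-subring. This is precisely where the assumption $\sqrt{-1} \in k$ does the work, by killing $\rho$ and thereby eliminating any potential Milnor K-theory contamination from the motivic Cartan formula; everything else in the argument is formal manipulation of regular sequences under flat base change.
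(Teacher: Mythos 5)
Your proposal is correct and essentially reproduces the paper's argument: reduce the question to the $\Z/2$-coefficient ring $\Z/2[u_2,\dots,u_n]$, identify $\theta_j \bmod \tau$ with $\gamma_n(\theta_j)$ modulo $u_1$, and invoke Theorem~\ref{seq} together with the freeness of $K^M(k)/2$ over $\Z/2$. The only cosmetic difference is the order of the two reductions — you first kill $\tau$ and then extend $\Z/2$-coefficients to $K^M(k)/2$, whereas the paper first passes to $H=\Z/2[\tau]$ via freeness and then kills $\tau$ and $u_1$ — but these are the same ideas and both are valid.
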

	\begin{proof}
		First, note that all $\theta_j$ are obtained from $u_2$ by using only Wu formula (Proposition \ref{Wu}) and Cartan formula where elements of $K^M(k)/2$ are never involved, from which it follows that every $\theta_j$ is an element of $\Z/2[\tau,u_2,\dots,u_n]$. Since $K^M(k)/2$ is free over $\Z/2$, it is enough to show the regularity of the sequence in $\Z/2[\tau,u_2,\dots,u_n]$. Then, the result follows from Theorem \ref{seq} by noticing that, modulo $\tau$ and $u_1$, $\theta_j=i_n\gamma_n(Sq^{2^{j-1}}\cdots Sq^1u_2)$, where $i_n$ is the inclusion of $S_n$ in $H(BO_n)$. 
	\end{proof}
	
	At this point, let us consider three homomorphisms $i:H_{top}(BSO_n) \rightarrow H(BSO_n)$, $h:H_{top}(BSO_n) \rightarrow H(BSO_n)$ and $t:H(BSO_n) \rightarrow H_{top}(BSO_n)$, where $i$ is defined by imposing $i(w_i)=u_i$ and extending to a ring homomorphism, $h$ by imposing, for any monomial $x$, $h(x)=\tau^{[{\frac{p_{i(x)}} {2}}-q_{i(x)}]}i(x)$, where $(q_{i(x)})[p_{i(x)}]$ is the bidegree of $i(x)$, and extending linearly and $t$ by imposing $t(u_i)=w_i$, $t(\tau)=1$ and $t(K_r^M(k)/2)=0$ for any $r>0$ and extending to a ring homomorphism.
	
	We start by describing some properties of these homomorphisms. First of all, $i$ and $h$ are graded with respect to the usual grading in $H_{top}(BSO_n)$ and the topological degree in $H(BSO_n)$. Besides, by the very definition of $h$, $h(x)$ has bidegree $([\frac{p_{i(x)}}{2}])[p_{i(x)}]$ for any homogeneous polynomial $x$. On the other hand, we notice that $h$ is not a ring homomorphism. Anyway, we have the following lemmas.
	
	\begin{lem}\label{h}
		For any homogeneous polynomials $x$ and $y$ in $H_{top}(BSO_n)$, we have that $h(xy)=\tau^{\epsilon}h(x)h(y)$, where $\epsilon$ is $1$ if $p_{i(x)}p_{i(y)}$ is odd and $0$ otherwise.
	\end{lem}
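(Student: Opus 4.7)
The plan is to reduce the statement to a computation on monomials (by bilinearity of the two sides) and then to an elementary arithmetic identity about floor functions. Since $h$ is defined monomial-by-monomial and extended linearly, and since both sides of $h(xy) = \tau^{\epsilon} h(x) h(y)$ are bilinear in $x$ and $y$, I may assume that $x = w_{i_1} \cdots w_{i_k}$ and $y = w_{j_1} \cdots w_{j_l}$ are single monomials.

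First I would compute the $\tau$-exponent in $h(x)$ explicitly. Let $r$ denote the number of odd indices among $i_1, \ldots, i_k$. Using the bidegree $([i/2])[i]$ of $u_i$, one finds $p_{i(x)} = \sum i_j$ and $q_{i(x)} = \sum [i_j/2] = (p_{i(x)} - r)/2$. A case split on the parity of $r$ (equivalently of $p_{i(x)}$) gives
\[
[p_{i(x)}/2] - q_{i(x)} = [r/2],
\]
so that $h(x) = \tau^{[r/2]} i(x)$. The analogous statement holds for $y$ with $s$ the number of odd indices in its defining product, and for $xy$, which has $r+s$ odd indices in total.

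Since $i$ is a ring homomorphism, $i(xy) = i(x) i(y)$, and the desired identity becomes
\[
\tau^{[(r+s)/2]} i(x) i(y) = \tau^{\epsilon} \cdot \tau^{[r/2] + [s/2]} i(x) i(y).
\]
Thus the lemma reduces to the elementary identity $[(r+s)/2] = [r/2] + [s/2] + \epsilon$, where $\epsilon = 1$ precisely when both $r$ and $s$ are odd. Noting that $p_{i(x)} \equiv r$ and $p_{i(y)} \equiv s \pmod{2}$ (as each odd $i_j$ contributes an odd summand to $\sum i_j$), the parity condition that $p_{i(x)} p_{i(y)}$ be odd matches exactly the requirement that $r$ and $s$ both be odd, and the lemma follows.

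I do not anticipate a real obstacle: the only delicate point is being careful that $h$ is defined monomial-by-monomial, so that expressions like $q_{i(x)}$ must be read on individual monomials, and that the extra factor of $\tau$ in the non-multiplicativity of $h$ comes solely from the failure of additivity of the floor function $[\,\cdot\,/2]$ on two odd integers.
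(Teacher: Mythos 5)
Your proposal is correct and follows essentially the same route as the paper: reduce to monomials (the paper does this by summing over the monomial decomposition and observing that $p_{i(x_j)} = p_{i(x)}$ for all $j$, which is the same content as your bilinearity observation), then apply the elementary floor-function identity. The only cosmetic difference is that you first reformulate $[p_{i(x)}/2] - q_{i(x)} = [r/2]$ in terms of the count $r$ of odd indices before applying the floor identity, whereas the paper applies it directly to $p_{i(x)}$ and $p_{i(y)}$ (using that $q_{i(x)}$, $q_{i(y)}$ are integers); both are equivalent.
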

	\begin{proof}
		At first consider two monomials $x$ and $y$. Then, we get
		$$h(xy)=\tau^{[{\frac {p_{i(x)}+p_{i(y)}} 2}-q_{i(x)}-q_{i(y)}]}i(xy)=\tau^{{\epsilon}+[{\frac{p_{i(x)}} {2}}-q_{i(x)}]+[{\frac{p_{i(y)}} {2}}-q_{i(y)}]}i(x)i(y)=\tau^{\epsilon}h(x)h(y)$$ 
		where $\epsilon$ is $1$ if $p_{i(x)}p_{i(y)}$ is odd and $0$ otherwise. For homogeneous polynomials $x=\sum_{j=0}^l x_j$ and $y=\sum_{k=0}^m y_k$, where $x_j$ and $y_k$ are monomials, we have 
		$$h(xy)=h(\sum_{j=0}^l\sum_{k=0}^mx_jy_k)=\sum_{j=0}^l\sum_{k=0}^mh(x_jy_k)=\sum_{j=0}^l\sum_{k=0}^m\tau^{\epsilon_{jk}}h(x_j)h(y_k)$$
		where $\epsilon_{jk}$ is $1$ if $p_{i(x_j)}p_{i(y_k)}$ is odd and $0$ otherwise. Now, we recall that $p_{i(x_j)}=p_{i(x)}$ and $p_{i(y_k)}=p_{i(y)}$ for any $j$ and $k$, from which it immediately follows that $h(xy)=\tau^{\epsilon}\sum_{j=0}^l\sum_{k=0}^mh(x_j)h(y_k)=\tau^{\epsilon}h(x)h(y)$, where $\epsilon$ is $1$ if $p_{i(x)}p_{i(y)}$ is odd and $0$ otherwise. 
	\end{proof} 
	
	\begin{lem}\label{ht}
		For any homogeneous (respect to bidegree) $z \in \Z/2[\tau,u_2, \dots ,u_n]$, we have that $ht(z)=\tau^{[{\frac{p_z} {2}}-q_z]}z$ (where $[{\frac{p_z} {2}}-q_z]$ can possibly be negative).
	\end{lem}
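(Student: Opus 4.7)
The plan is to first reduce the statement to a single monomial and then compute directly. Since $t$ is a ring homomorphism and $h$ is $\Z/2$-linear by construction, the composite $ht$ is $\Z/2$-linear. A homogeneous element $z$ of bidegree $(q_z)[p_z]$ is a $\Z/2$-linear combination of monomials of the form $z_j=\tau^{a_j}u^{I_j}$, where $u^{I_j}$ stands for a product $u_{i_1}^{b_1}\cdots u_{i_n}^{b_n}$ and each $z_j$ has bidegree $(q_z)[p_z]$. In particular, for each such monomial one has $p_{u^{I_j}}=p_z$ and $q_{u^{I_j}}=q_z-a_j$. Hence it suffices to verify the formula on a single monomial $z=\tau^{a}u^{I}$.

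For such $z$, I would compute as follows. Since $t$ is a ring map sending $\tau \mapsto 1$ and $u_i \mapsto w_i$, one has $t(z)=w^I$; then $i(t(z))=u^I$, and its bidegree $(q_{u^I})[p_{u^I}]$ satisfies $p_{u^I}=p_z$ and $q_{u^I}=q_z-a$. By the definition of $h$ on monomials,
$$ht(z)=h(w^I)=\tau^{[\frac{p_z}{2}-(q_z-a)]}\,u^{I}.$$
Since $q_z$ and $a$ are integers, they pass through the floor, so $[\tfrac{p_z}{2}-q_z+a]=[\tfrac{p_z}{2}]-q_z+a=[\tfrac{p_z}{2}-q_z]+a$, and therefore
$$ht(z)=\tau^{[\frac{p_z}{2}-q_z]}\cdot\tau^{a}u^{I}=\tau^{[\frac{p_z}{2}-q_z]}\,z,$$
which is exactly the required identity. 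Summing over the monomials appearing in a general homogeneous $z$ then yields the lemma.

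The only subtlety is the case in which $[\tfrac{p_z}{2}-q_z]$ is negative, and this is essentially automatic. Indeed, for every monomial $z=\tau^{a}u^{I}$ of bidegree $(q_z)[p_z]$ one has $q_{u^I}=\sum b_j[\tfrac{i_j}{2}]\le[\sum b_j\tfrac{i_j}{2}]=[\tfrac{p_z}{2}]$, and hence $a=q_z-q_{u^I}\ge q_z-[\tfrac{p_z}{2}]=-[\tfrac{p_z}{2}-q_z]$. Consequently, $z$ is divisible by $\tau^{-[p_z/2-q_z]}$ whenever that quantity is positive, so the product $\tau^{[\frac{p_z}{2}-q_z]}z$ makes sense in $H(BSO_n)$ via cancellation in the $\tau$-variable, and the equality displayed above holds unambiguously. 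The whole proof is essentially this bookkeeping, with no real geometric input beyond the definitions.
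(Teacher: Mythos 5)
Your proof is correct and takes essentially the same route as the paper: decompose $z$ into monomials $\tau^{a}u^{I}$, observe that $it$ strips off the $\tau$-power, apply the definition of $h$, and absorb the integers $q_z$ and $a$ into the floor. The extra remark verifying that each monomial carries enough powers of $\tau$ to make sense of a negative exponent is a nice bit of bookkeeping the paper leaves implicit, but it does not change the argument.
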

	\begin{proof}
		Write $z$ as $\sum_{j=0}^m z_j$, where $z_j$ are monomials in $\Z/2[\tau,u_2, \dots ,u_n]$. Then,
		$$ht(z)=\sum_{j=0}^m ht(z_j)=\sum_{j=0}^m \tau^{[{\frac{p_{it(z_j)}} {2}}-q_{it(z_j)}]}it(z_j).$$
		
		Notice that $z_j=\tau^{n_j}x_j$, for some monomials $x_j$ in $\Z/2[u_2, \dots ,u_n]$. By the very definition of $i$ and $t$ we get that $it(z_j)=x_j$. Thus, 
		$$ht(z)=\sum_{j=0}^m \tau^{[{\frac{p_{x_j}} {2}}-q_{x_j}]}x_j=\sum_{j=0}^m \tau^{[{\frac{p_{x_j}} {2}}-q_{x_j}-n_j]}z_j=\tau^{[{\frac{p_z} {2}}-q_z]}z$$ since $p_{x_j}=p_{z_j}=p_z$ and $q_{x_j}+n_j=q_{z_j}=q_z$. 
	\end{proof}  
	
	\begin{lem}\label{th}
		For any $j$, $t(\theta_j)=\rho_j$ and $h(\rho_j)=\theta_j$.
	\end{lem}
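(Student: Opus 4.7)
The plan is to prove the two identities in order, using induction on $j$ and then appealing to Lemma \ref{ht}.

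For the first identity, $t(\theta_j)=\rho_j$, I would induct on $j$. The base case is immediate from $t(u_2)=w_2$. For the inductive step, the key point is that $t$ intertwines motivic and topological Steenrod squares, i.e. $t\circ Sq^k = Sq^k\circ t$ on all of $H(BSO_n)$. By the Cartan formula, which has the same shape in both the motivic and the topological settings (since $\rho=0$ in our situation), it is enough to check this intertwining on ring generators. On the subtle classes $u_m$, equality follows by directly comparing the motivic Wu formula of Proposition \ref{Wu} with the classical topological one: both sides are polynomial expressions in the same binomial coefficients, and $t(u_i)=w_i$. On the coefficient ring $H$, the intertwining is automatic, since the motivic Steenrod algebra acts trivially on $H$ (recalled in Section 2), and the topological Steenrod squares kill the unit $1=t(\tau)$ in positive degrees while $t$ annihilates positive-degree Milnor K-theory classes. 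The inductive step then reads $t(\theta_{j+1})=t(Sq^{2^j}\theta_j)=Sq^{2^j}t(\theta_j)=Sq^{2^j}\rho_j=\rho_{j+1}$.

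For the second identity, $h(\rho_j)=\theta_j$, I would first observe that $\theta_j\in\Z/2[u_2,\ldots,u_n]\subset\Z/2[\tau,u_2,\ldots,u_n]$: the Wu formula produces $\Z/2$-polynomials in the $u_i$'s with no $\tau$ appearing, and this property is preserved under iterated Steenrod action by Cartan. Second, I compute the bidegree of $\theta_j$. Since $Sq^1$ sends $(q)[p]$ to $(q)[p+1]$ and, for $i\geq 1$, $Sq^{2^i}$ sends $(q)[p]$ to $(q+2^{i-1})[p+2^i]$, iterating from the bidegree $(1)[2]$ of $u_2$ gives $\theta_j$ of bidegree $([\frac{2^j+1}{2}])[2^j+1]$; in particular $[\frac{p_{\theta_j}}{2}]-q_{\theta_j}=0$ on the nose. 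Applying Lemma \ref{ht} then yields $h(t(\theta_j))=\tau^0\theta_j=\theta_j$, and combining this with the first identity gives $h(\rho_j)=h(t(\theta_j))=\theta_j$.

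The main obstacle is the intertwining $t\circ Sq^k=Sq^k\circ t$; once this is secured, the remainder is a bookkeeping check of bidegrees and a direct application of Lemma \ref{ht}. The hypothesis $\sqrt{-1}\in k$ is what makes both the triviality of the motivic Steenrod action on $H$ and the formal identity of the motivic and topological Cartan and Wu formulas available, so the comparison with topology is completely clean in our setting.
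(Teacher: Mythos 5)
Your proof is correct and follows essentially the same route as the paper: $t(\theta_j)=\rho_j$ is deduced from the agreement of the motivic and topological Wu formulas (the paper phrases this as "by the very definition of $t$"), and $h(\rho_j)=\theta_j$ is obtained by applying Lemma \ref{ht} to $\theta_j$ and noting that its bidegree $(2^{j-1})[2^j+1]$ makes the $\tau$-exponent vanish.

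One side observation you make is false, though: it is not true that $\theta_j\in\Z/2[u_2,\ldots,u_n]$. The motivic Cartan formula (even with $\rho=0$) carries $\tau$-twists on the odd-odd terms, namely $Sq^{2i}(uv)=\sum Sq^{2r}(u)Sq^{2i-2r}(v)+\tau\sum Sq^{2r+1}(u)Sq^{2i-2r-1}(v)$, so iterated Steenrod action does \emph{not} preserve $\tau$-freeness. Concretely, $\theta_2=u_2u_3+u_5$ and $Sq^4(u_2u_3)$ contributes $\tau\, Sq^1u_2\cdot Sq^3u_3=\tau u_3^3$, so $\theta_3=\tau u_3^3+u_2^3u_3+u_2^2u_5+u_4u_5+u_3u_6+u_2u_7+u_9$ genuinely involves $\tau$. (This is also why the paper bothers to introduce the slightly awkward map $h$ at all: if the $\theta_j$ were $\tau$-free, $i$ would have sufficed.) Fortunately this observation is not load-bearing in your argument: Lemma \ref{ht} applies to any homogeneous element of $\Z/2[\tau,u_2,\ldots,u_n]$, and your actual application of it only uses the bidegree of $\theta_j$, which you compute correctly. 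Your justification of the intertwining $t\circ Sq^k=Sq^k\circ t$ is likewise fine, but should be phrased more carefully: the motivic and topological Cartan formulas do not have "the same shape" before applying $t$; rather, the $\tau$-twists disappear under $t$ because $t(\tau)=1$, which is what makes the intertwining propagate from generators to the whole ring.
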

	\begin{proof}
		Since a Wu formula (Proposition \ref{Wu}) holds even in the motivic case by \ref{Wu}, we get that $t(\theta_j)=\rho_j$ by the very definition of $t$. Then, $h(\rho_j)=ht(\theta_j)=\theta_j$ by Lemma \ref{ht} and by recalling that $\theta_j$ is in bidegree $(2^{j-1})[2^j+1]$. 
	\end{proof}
	
	At this point, denote by $I_j$ the ideal in $H(BSO_n)$ generated by $\theta_0, \dots ,\theta_{j-1}$ and by $I^{top}_j$ the ideal in $H_{top}(BSO_n)$ generated by $\rho_0, \dots ,\rho_{j-1}$. We are now ready to prove the main result of this section.
	
	\begin{thm}\label{MQ1}
		The sequence $\theta_0,\dots,\theta_{k(n)-1}$ is regular in $H(BSO_n)$. Moreover, $\theta_{k(n)} \in I_{k(n)}$, where $k(n)$ depends on $n$ as in the table of Theorem \ref{Q1}.
	\end{thm}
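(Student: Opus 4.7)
The plan is to combine Corollary~\ref{tauseq} (regularity of the sequence ``at $\tau=0$'') with Theorem~\ref{Q1} (Quillen's topological regularity, ``at $\tau=1$''), bridging the two via the homomorphisms $t$, $i$ and $h$ introduced at the end of the previous section. Since $K^M(k)/2$ is free over $\Z/2$, one may first reduce to $k$ algebraically closed, so that $H(BSO_n)=\Z/2[\tau,u_2,\dots,u_n]$. With respect to the $q$-grading, all of $\tau,u_2,\dots,u_n$ sit in strictly positive degree, so homogeneous regular sequences can be reordered; in particular, Corollary~\ref{tauseq} already shows that $\theta_0,\dots,\theta_{h-1}$ is regular in $H(BSO_n)$ and that $\tau$ is a non-zero-divisor modulo $I_h$.

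A direct comparison of the two tables shows that $h=k$ for $n\equiv 1,2,6,7,0\pmod 8$, while $h=k-1$ for $n\equiv 3,4,5\pmod 8$; thus regularity of the full length-$k$ sequence is immediate in the first family, and only the extension by one element needs to be argued in the second. Assume therefore $h=k-1$ and suppose $x\theta_{k-1}\in I_{k-1}$ with $x$ homogeneous. Applying $t$ and Theorem~\ref{Q1} (which guarantees that $\rho_{k-1}$ is a non-zero-divisor modulo $I_{k-1}^{top}$) produces an expression $t(x)=\sum_{j<k-1} s_j\rho_j$. Applying $h$ and invoking Lemmas~\ref{h} and~\ref{th} gives
$$h(t(x))=\sum_{j<k-1}\tau^{\epsilon_j}h(s_j)\theta_j\in I_{k-1}=I_h,$$
while Lemma~\ref{ht} rewrites the left-hand side as $\tau^{[p_x/2]-q_x}x$, with a negative exponent interpreted as the fact that $x$ is itself divisible by the corresponding positive power of $\tau$. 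Combined with the non-zero-divisor property of $\tau$ modulo $I_h$, this forces $x\in I_{k-1}$, so that $\theta_{k-1}$ is indeed a non-zero-divisor modulo $I_{k-1}$.

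Finally, for the statement $\theta_k\in I_k$, the maximality built into Quillen's table value supplies a relation $\rho_k=\sum_{j<k}s_j\rho_j$ in $H^*(BSO_n)$. Applying $h$ once more and using Lemmas~\ref{h} and~\ref{th}, one obtains $\theta_k=h(\rho_k)=\sum_{j<k}\tau^{\epsilon_j}h(s_j)\theta_j\in I_k$. The main technical difficulty is the case $h=k-1$: bridging the off-by-one between the two tables forces a careful passage through $t$ and $h$ with non-trivial $\tau$-bookkeeping, which is however cleanly packaged by Lemmas~\ref{h} and~\ref{ht}.
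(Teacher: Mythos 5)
Your proposal is correct and follows essentially the same route as the paper: reduce to $\Z/2[\tau,u_2,\dots,u_n]$ using freeness of $K^M(k)/2$ over $\Z/2$; dispatch the residue classes $n\equiv 0,1,2,6,7\pmod 8$ (where $h=k$) immediately via Corollary~\ref{tauseq} and permutability of homogeneous regular sequences; handle $n\equiv 3,4,5\pmod 8$ (where $h=k-1$) by pushing a relation $x\theta_{k-1}\in I_{k-1}$ through $t$, invoking Quillen's regularity, pulling back via $h$ with the $\tau$-bookkeeping of Lemmas~\ref{h}--\ref{th}, and using $\tau$ as a non-zero-divisor modulo $I_{k-1}$; and finally obtain $\theta_k\in I_k$ by applying $h$ to the topological relation $\rho_k\in I_k^{top}$. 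The only cosmetic difference is that the paper explicitly cites Theorem~\ref{Q2} (the isomorphism onto $H^*(BSpin_n)$) for $\rho_k\in I_k^{top}$, whereas you appeal more loosely to the ``maximality built into Quillen's table value''; stating the citation to Theorem~\ref{Q2} would make that step airtight, since Theorem~\ref{Q1} alone asserts regularity of a length-$k$ sequence without asserting maximality.
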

	\begin{proof}
		Since $K^M(k)/2$ is free over $\Z/2$ we just need to show the regularity of the needed sequence in $\Z/2[\tau,u_2, \dots ,u_n]$. From the fact that regular sequences of homogeneous elements of positive degree permute and by Corollary \ref{tauseq}, we immediately deduce the regularity of the sequence for $n=0,1,2,6 \: and \: 7(mod \: 8)$, since in these cases $h(n)=k(n)$. Now, suppose $n=3,4 \: or \: 5(mod \: 8)$. In these cases, $h(n)=k(n)-1$, therefore Corollary \ref{tauseq} implies that the sequence $\tau,\theta_0,\dots,\theta_{k(n)-2}$ is regular. Let $z$ be a homogeneous polynomial in $\Z/2[\tau,u_2,\dots,u_n]$ such that $z\theta_{k(n)-1} \in I_{k(n)-1}$. Then, we deduce that $t(z)\rho_{k(n)-1} \in I_{k(n)-1}^{top}$. It follows from Theorem \ref{Q1} that $t(z)=\sum_{l=0}^{k(n)-2} \psi_l\rho_l$ for some homogeneous $\psi_l \in H_{top}(BSO_n)$ and, after applying $h$, we obtain $\tau^{[{\frac{p_z} {2}}-q_z]}z=\sum_{l=0}^{k(n)-2} \tau^{\epsilon_l}h(\psi_l)\theta_l$ by Lemmas \ref{h}, \ref{ht} and \ref{th}. Hence, the regularity of $\theta_0,\dots,\theta_{k(n)-2},\tau$ implies that $z \in I_{k(n)-1}$ and we obtain the regularity of the sequence $\theta_0,\dots,\theta_{k(n)-1}$. At this point, we only need to show that $\theta_{k(n)} \in I_{k(n)}$. Note that $\rho_{k(n)} \in I_{k(n)}^{top}$ by Theorem \ref{Q2}. Hence, $\rho_{k(n)}=\sum_{l=0}^{k(n)-1} \phi_l\rho_l$ for some homogeneous $\phi_l \in H_{top}(BSO_n)$ and, after applying $h$, we obtain $\theta_{k(n)}=\sum_{l=0}^{k(n)-1} \tau^{\epsilon_l}h(\phi_l)\theta_l$ by Lemmas \ref{h} and \ref{th}. Thus, $\theta_{k(n)} \in I_{k(n)}$, which completes the proof. 
	\end{proof}
	
	\section{The motivic cohomology ring of $BSpin_n$}
	
	In this section we prove a motivic version of Theorem \ref{Q2}. The general strategy consists in using the grid of long exact sequences in motivic cohomology from Diagram \ref{d1} in Section \ref{D} in order to get the result by an inductive argument. This method allows us to lift, not only subtle classes, but even relations among them from the cohomology of $BSpin_n$ to the cohomology of $BSpin_{n+1}$. These relations are essentially the elements $\theta_j$ of the motivic regular sequences encountered in the previous section. Moreover, we see that a new subtle class $v_{2^{k(n)}}$ appears in the motivic cohomology of $BSpin_n$ and the obstruction to lift it to the cohomology of $BSpin_{n+1}$ is detected by the increasing of the lenght of the regular sequence moving from $n$ to $n+1$. In the proof of the main theorem it is essential to deal with the two possible cases separately: on the one hand the case that $v_{2^{k(n)}}$ is liftable and the lenght of the regular sequence stays unchanged, i.e. $k(n+1)=k(n)$, on the other the case that $v_{2^{k(n)}}$ is not liftable and the lenght of the regular sequence increases by one, i.e. $k(n+1)=k(n)+1$. Furthermore, we notice that when $v_{2^{k(n)}}$ is not liftable, then ``almost" its square is so, giving rise to a new extra class $v_{2^{k(n)+1}}$ in doubled degrees. 
	
	We start by showing that, as in topology, the second subtle Stiefel-Whitney class is trivial in the motivic cohomology ring $H(BSpin_n)$.
	
	\begin{lem}\label{u2}
		For any $n \geq 2$, $u_2$ is trivial in $H(BSpin_n)$. Moreover, there exists a unique element $x_0$ in $H(Cone(a_n))$ such that $b_n^*(x_0)=u_2$.
	\end{lem}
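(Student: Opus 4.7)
The plan is to argue by induction on $n \geq 2$, exploiting the ladder (\#) of long exact sequences from Section \ref{D} together with the Thom isomorphism for $\widetilde{\pi}\colon \widehat{B}Spin_n \to BSpin_{n+1}$, which forces $H^{*,*}(Cone(\widetilde{\pi}))$ to vanish in sufficiently low bidegrees. For the base case $n=2$, since $k$ contains $\sqrt{-1}$ both $SO_2$ and $Spin_2$ are isomorphic to $\mathbb{G}_m$, and $a_2\colon BSpin_2 \to BSO_2$ is induced by the squaring isogeny. One has $H^{*,*}(B\mathbb{G}_m) = H[c_1]$ with $c_1 \in H^{2,1}$, and squaring acts by multiplication by $2$ on $c_1$, which is zero modulo $2$; since $u_2$ coincides with $c_1$ in this bidegree, $a_2^*(u_2) = 0$. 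One also reads off $H^{1,1}(BSpin_2) = K_1^M(k)/2$, because $c_1$ sits in bidegree $(2,1)$.

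For the inductive step I would first establish as an auxiliary statement that $H^{1,1}(BSpin_n) = K_1^M(k)/2$ for all $n \geq 2$. The column of (\#) gives
$$H^{1,1}(Cone(\widetilde{\pi})) \xrightarrow{\widetilde{f}^*} H^{1,1}(BSpin_{n+1}) \xrightarrow{\widetilde{g}^*} H^{1,1}(BSpin_n),$$
and the Thom isomorphism identifies $H^{1,1}(Cone(\widetilde{\pi}))$ with $H^{-n,1-[(n+1)/2]}(BSpin_{n+1})$, which vanishes for $n \geq 1$. Combined with the canonical inclusion $K_1^M(k)/2 \hookrightarrow H^{1,1}(BSpin_{n+1})$ coming from the structural map to $\mathrm{Spec}(k)$, this forces the auxiliary claim. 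The same Thom vanishing applied in bidegree $(2,1)$ (where the hypothesis $n \geq 2$ guarantees $1-n<0$) shows that $\widetilde{g}^*\colon H^{2,1}(BSpin_{n+1}) \to H^{2,1}(BSpin_n)$ is injective. Since by the axiomatic characterization of subtle classes $g^*(u_2)=u_2$ in $H(BSO_n)$, commutativity of the square of classifying spaces in the ladder (\#) gives $\widetilde{g}^*(a_{n+1}^*(u_2)) = a_n^*(u_2)$, which vanishes by inductive hypothesis; the injectivity of $\widetilde{g}^*$ then yields $a_{n+1}^*(u_2)=0$.

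Given $a_n^*(u_2) = 0$, the row of (\#) immediately provides $x_0 \in H^{2,1}(Cone(a_n))$ with $b_n^*(x_0) = u_2$ by exactness. For uniqueness, two lifts differ by an element of $\ker b_n^* = \mathrm{Im}\, c_n^*$, so it suffices to show $c_n^*\colon H^{1,1}(BSpin_n) \to H^{2,1}(Cone(a_n))$ vanishes. Exactness at $H^{1,1}(BSpin_n)$ identifies $\ker c_n^*$ with the image of $a_n^*\colon H^{1,1}(BSO_n) \to H^{1,1}(BSpin_n)$. Since $u_1 = 0$ in $H(BSO_n)$ we have $H^{1,1}(BSO_n) = K_1^M(k)/2$, and by the auxiliary statement also $H^{1,1}(BSpin_n) = K_1^M(k)/2$; since $a_n^*$ acts as the identity on the $K_1^M(k)/2$-summand coming from the point, $a_n^*$ is surjective on $H^{1,1}$, whence $c_n^* = 0$. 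The main technical input throughout is the bidegree vanishing provided by the Thom isomorphism for $\widetilde{\pi}$; the rest is formal diagram chasing in (\#).
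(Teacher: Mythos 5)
Your proof is correct and follows essentially the same route as the paper's. For the vanishing of $u_2$, the paper phrases the inductive step as ``$u_2$ is divisible by $u_{n+1}$, but the cofactor lives in negative degree,'' which is exactly your observation that $\widetilde{f}^*$ has trivial source in bidegree $(2,1)$ and hence $\widetilde{g}^*$ is injective there; for the uniqueness of $x_0$, the paper likewise computes $H^{1,1}(BSpin_n)=K_1^M(k)/2$ by noting $\widetilde{g}^*$ is an isomorphism in bidegree $(1,1)$ (your Thom vanishing argument) and then kills $c_n^*$ by the surjectivity of $a_n^*$ on $H^{1,1}$, just as you do.
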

	\begin{proof}
		Recall that $SO_2 \cong Spin_2 \cong G_m$, where $G_m$ is the multiplicative group, and the morphism from $Spin_2$ to $SO_2$ is the double cover $G_m \xrightarrow{(\cdot)^2} G_m$, which induces the map on classifying spaces $a_2:BG_m \rightarrow BG_m$. By Kummer theory, the induced morphism on Picard groups $Pic(BG_m) \rightarrow Pic(BG_m)$ is multiplication by $2$. Now, recall that $ Pic(BG_m) \cong H^{2,1}(BG_m,\Z)$ (see \cite[Corollary 4.2]{MVW}). Then, for $n=2$ the homomorphism 
		$$a_2^*:H(BG_m) \cong H[u_2] \rightarrow H(BG_m) \cong H[v_2]$$ 
		sends $u_2$ to $2v_2$, hence $u_2=0$ in $H(BSpin_2)$.
		
		Now, suppose $u_2=0$ in $H(BSpin_n)$, then $u_2$ should be divisible by $u_{n+1}$ in $H(BSpin_{n+1})$, which forces $u_2$ to be trivial by degree reasons. Therefore, by induction, $u_2=0$ in $H(BSpin_n)$ for any $n$. It immediately follows that there exists $x_0$ in $H(Cone(a_n))$ such that $b_n^*(x_0)=u_2$ for any $n \geq 2$. We prove its uniqueness by showing that $b_n^*$ is a monomorphism in bidegree $(1)[2]$. First of all we notice that, for any $n \geq 2$, $H^{1,1}(BSpin_n) \cong K^M_1(k)/2$ by induction on $n$ and by observing that $\widetilde{g}^*$ is an isomorphism in bidegree $(1)[1]$. Hence, $c_n^*:H^{1,1}(BSpin_n) \rightarrow H^{2,1}(Cone(a_n))$ is the zero homomorphism, since the composition $H^{1,1} \rightarrow H^{1,1}(BSO_n) \rightarrow H^{1,1}(BSpin_n)$ is surjective and, therefore, so is the second map. It follows that $b_n^*:H^{2,1}(Cone(a_n)) \rightarrow H^{2,1}(BSO_n)$ is a monomorphism, as we aimed to show. 
	\end{proof}
	
	From the previous lemma, for any $n \geq 2$, we have a canonical set of elements $x_j$ in $H(Cone(a_n))$ defined by $x_j=Sq^{2^{j-1}} \cdots Sq^1x_0$ for any $j>0$. Denote by $\langle x_0, \dots ,x_{j-1}\rangle  $ the $H(BSO_n)$-submodule of $H(Cone(a_n))$ generated by $x_0, \dots ,x_{j-1}$. Before proceeding we need the following lemma.
	
	\begin{lem}\label{x}
		For any $j \geq 1$, $x_j \notin \langle x_0, \dots ,x_{j-1}\rangle$ in $H(Cone(a_2))$, and consequently in any $H(Cone(a_n))$.
	\end{lem}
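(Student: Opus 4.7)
The plan is to reduce to $n=2$ by functoriality and then exploit that $a_2$ is the squaring map on $BG_m$. The inclusions $Spin_2 \hookrightarrow Spin_n$ and $SO_2 \hookrightarrow SO_n$ produce a map of cofibers $Cone(a_2) \rightarrow Cone(a_n)$; the corresponding pullback on motivic cohomology is Steenrod-equivariant, sends each $x_l$ to $x_l$ by uniqueness (Lemma \ref{u2}) together with naturality of Steenrod operations, and is compatible with the $H(BSO_n)$-module structure via the restriction $H(BSO_n) \rightarrow H(BSO_2)$. Any $H(BSO_n)$-linear expression of $x_j$ in terms of $x_0,\dots,x_{j-1}$ in $H(Cone(a_n))$ would pull back to one in $H(Cone(a_2))$, so it suffices to treat $n=2$, which is the ``consequently'' part of the statement.

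For $n=2$, $Spin_2 \cong SO_2 \cong G_m$ and $a_2$ is squaring on $BG_m$, so $a_2^*(u_2) = 2v_2 = 0$ in $H(BSpin_2) = H[v_2]$; hence $\ker(a_2^*) = u_2 \cdot H[u_2]$ and $\mathrm{image}(a_2^*) = H$. Reading off the long exact sequence (\#), $H(Cone(a_2))$ splits as a bigraded $H$-module: $b_2^*$ surjects onto $u_2 \cdot H[u_2]$, and $c_2^*$ identifies its image with $H[v_2]/H \cong \bigoplus_{k \geq 1} H \cdot v_2^k$.

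Next I inductively identify $x_j = c_2^*(v_2^{2^{j-1}})$ for $j \geq 1$. For $j=1$, by Proposition \ref{Wu} $Sq^1 u_2 = u_1 u_2 + u_3 = 0$ in $H(BSO_2)$, so $x_1$ lies in $\ker(b_2^*) \cap H^{3,1}(Cone(a_2))$; a bidegree count in the decomposition above shows this group is one-dimensional, spanned by $c_2^*(v_2)$. To rule out $x_1 = 0$ I pass to integral motivic cohomology: if $Sq^1 x_0 = 0$ then $x_0$ would admit an integral lift $\tilde x_0 \in H^{2,1}(Cone(a_2), \Z)$, but the integral analogue of the long exact sequence forces $b_2^*(\tilde x_0) \in \ker(a_2^*:\Z \tilde u_2 \rightarrow \Z \tilde v_2) = 0$ because $a_2^*(\tilde u_2) = 2\tilde v_2$, contradicting $b_2^*(\tilde x_0) \equiv u_2 \not\equiv 0 \pmod 2$. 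For the inductive step, $v_2^{2^{j-1}}$ lies on the Chern slope in bidegree $(2^{j-1})[2^j]$, so the same principle as in Corollary \ref{corWu} gives $Sq^{2^j}(v_2^{2^{j-1}}) = v_2^{2^j}$, and naturality of Steenrod operations with $c_2^*$ yields $x_{j+1} = Sq^{2^j} c_2^*(v_2^{2^{j-1}}) = c_2^*(v_2^{2^j})$.

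Finally I prove the non-membership. Since $a_2^*(u_2) = 0$, for every $l \geq 1$ one has $u_2 \cdot x_l = c_2^*(a_2^*(u_2) \cdot v_2^{2^{l-1}}) = 0$, so $\langle x_0,\dots,x_{j-1}\rangle = H[u_2] \cdot x_0 + H \cdot x_1 + \dots + H \cdot x_{j-1}$. Suppose $x_j = r_0 x_0 + \sum_{l=1}^{j-1} h_l x_l$ with $r_0 \in H[u_2]$ and $h_l \in H$. Applying $b_2^*$ yields $0 = r_0 u_2$, forcing $r_0 = 0$ in the polynomial ring $H[u_2]$. The remaining identity $c_2^*\left(v_2^{2^{j-1}} - \sum_{l=1}^{j-1} h_l v_2^{2^{l-1}}\right) = 0$ places its argument in $\ker(c_2^*) = \mathrm{image}(a_2^*) = H \subset H[v_2]$; but $H[v_2]$ is $H$-free on $\{v_2^k\}_{k \geq 0}$ and the exponents $1, 2, 4, \dots, 2^{j-1}$ are pairwise distinct, so matching the $v_2^{2^{j-1}}$-coefficient gives $1 = 0$, a contradiction. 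The main obstacle is the non-vanishing $Sq^1 x_0 \neq 0$, which breaks the purely formal bookkeeping and genuinely encodes the integral obstruction coming from the squaring map on $BG_m$; everything else reduces to linear algebra in free bigraded $H$-modules.
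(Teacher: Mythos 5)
Your reduction to $n=2$ via the map $Cone(a_2)\rightarrow Cone(a_n)$ and uniqueness of $x_0$ is correct, and so is the endgame once you know $x_j = c_2^*(v_2^{2^{j-1}})$. The gap is in the base case $x_1 \neq 0$: you write ``if $Sq^1 x_0 = 0$ then $x_0$ would admit an integral lift,'' but this is the wrong direction. Having an integral lift implies $\beta(x_0)=0$ and hence $Sq^1 x_0 = 0$; conversely $Sq^1 x_0 = 0$ only tells you that $\beta(x_0)$ lies in the image of multiplication by $2$ on integral cohomology, not that $\beta(x_0)=0$. So the contradiction you derive (from $b_2^*(\tilde x_0)=0$ versus $b_2^*(\tilde x_0) \equiv u_2$) establishes that $x_0$ has no integral lift, i.e.\ that $\beta(x_0)\neq 0$, but it does not by itself rule out $\beta(x_0)$ being $2$-divisible, which is exactly what $Sq^1 x_0 = \rho(\beta(x_0))$ could fail on.

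The paper closes this by a second step you omit: after getting $y = \beta(x_0) \neq 0$, it uses $b_2^*(y)=0$ (since $u_2$ is integral) to write $y = m\,c_2^*(v_2)$ and then checks that $m$ must be odd, so $y$ is not $2$-divisible and $Sq^1 x_0 = y \bmod 2 \neq 0$. Equivalently, one can compute from the integral long exact sequence that $H^{3,1}(Cone(a_2),\Z) \cong \Z/2$, in which case ``$2$-divisible'' coincides with ``zero'' and the claim follows from $\beta(x_0)\neq 0$ alone; either way, some argument controlling the $2$-divisibility of $\beta(x_0)$ is needed and is currently missing. Everything else in your proof --- the determination of $\ker(a_2^*)$ and $\operatorname{im}(a_2^*)$ mod $2$, the inductive identification $x_{j+1} = c_2^*(v_2^{2^j})$ via the Cartan/Wu slope argument, the observation that $u_2$ kills each $x_l$ for $l\geq 1$, and the final linear-independence argument in the free $H$-module $H[v_2]$ --- matches the paper's method and is fine.
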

	\begin{proof}
		We start by considering the Bockstein homomorphism $\beta$ associated to the short exact sequence $0 \rightarrow \Z \rightarrow \Z \rightarrow \Z/2 \rightarrow 0$. The homomorphism $a_2^*$ on cohomology with integer coefficients sends $u_2$ to $2v_2$ where $v_2$ is the generator of $H(BSpin_2) \cong H(BG_m)$ and so is injective, hence $b_2^*$ is the $0$ homomorphism on cohomology with integer coefficients, from which it follows that $x_0$ cannot come from any integral cohomology class. Thus, $y=\beta(x_0) \neq 0$. Moreover, since $u_2$ comes from an integral cohomology class, we have $b_2^*(y)=0$, so $y=mc_2^*(v_2)$ for some integer $m$. At this point we notice that $mv_2$ is in the image of $a_2^*$ for any even $m$, so $m$ must be odd, which implies that $y$ is not divisible by $2$, since $v_2 \: mod(2)$ is not in the image of $a_2^*$. This is enough to conclude that 
		$$x_1=Sq^1x_0=\beta(x_0) \: mod(2)=y \: mod(2) \neq 0.$$
		
		Hence, $x_1=c_2^*(v_2)$ from which we deduce that 
		$$x_j=Sq^{2^{j-1}} \dots Sq^2x_1=c_2^*(Sq^{2^{j-1}} \dots Sq^2v_2)=c_2^*(v_2^{2^{j-1}})$$
		by \cite[Lemma 9.8]{V2}, since $Sq^{2^{i-1}} \dots Sq^2v_2$ is in bidegree $(2^{i-1})[2^i]$ for any $i$. Now, suppose that $x_j \in \langle x_0, \dots ,x_{j-1}\rangle  $, in other words $x_j=\sum_{i=0}^{j-1}\phi_i x_i$ for some $\phi_i \in H[u_2]$. Then, we would have that 
		$$\phi_0u_2=b_2^*(x_j+\sum_{i=0}^{j-1}\phi_i x_i)=0$$ 
		which implies $\phi_0=0$. Moreover, since positive powers of $u_2$ act trivially on $H[v_2]$ (with $\Z/2$-coefficients), we have that
		$$c_2^*(v_2^{2^{j-1}})=c_2^*(v_2^{2^{j-1}}+\sum_{i=1}^{j-1}\phi_i v_2^{2^{i-1}})=0$$
		that is impossible since $c_2^*$ is injective on the slope $2$ line (above zero), which comes from the fact that $H(BSO_2) \cong H[u_2]$ and $a_2^*(u_2)=0$. 
	\end{proof}
	
	At this point, we are ready to prove our main result which provides the complete description of the motivic cohomology of $BSpin_n$ over fields of characteristic different from $2$ containing $\sqrt{-1}$.
	
	\begin{thm}\label{MQ2}
		For any $n \geq 2$, there exists a cohomology class $v_{2^{k(n)}}$ of bidegree $(2^{k(n)-1})[2^{k(n)}]$ such that the natural homomorphism of $H$-algebras 
		$$H(BSO_n)/I_{k(n)} \otimes_H H[v_{2^{k(n)}}] \rightarrow H(BSpin_n)$$
		is an isomorphism, where $I_{k(n)}$ is the ideal generated by $\theta_0, \dots ,\theta_{k(n)-1}$ and $k(n)$ depends on $n$ as in the table of Theorem \ref{Q1}.
	\end{thm}
	\begin{proof}
		Our proof goes by induction on $n$, starting from $n=2$.
		
		\textit{Base case}: For $n=2$, $H(BSpin_2) \cong H(BG_m) \cong H[v_2]$ provides our induction base.
		
		\textit{Inductive step}: We denote by $\theta'_j$ and $\theta_j$ the class $Sq^{2^{j-1}} \cdots Sq^1u_2$ in $H(BSO_n)$ and $H(BSO_{n+1})$ respectively, by $I'_{k(n)}$ the ideal generated by the elements $u_2,\theta'_1, \dots ,\theta'_{{k(n)}-1}$, by $I_{k(n)}$ the ideal generated by $u_2,\theta_1, \dots ,\theta_{{k(n)}-1}$, by $x'_0$ and $x_0$ the unique lifts of $u_2$ to $H(Cone(a_n))$ and $H(Cone(a_{n+1}))$ respectively, by $x'_j$ the class $Sq^{2^{j-1}} \cdots Sq^1x'_0$ and by $x_j$ the class $Sq^{2^{j-1}} \cdots Sq^1x_0$.
		
		Now, suppose by induction hypothesis that we have an isomorphism
		\begin{equation}\label{ih}
		H(BSO_n)/I'_{k(n)} \otimes_H H[v_{2^{k(n)}}] \rightarrow H(BSpin_n)
		\end{equation} 
		where $k(n)$ is the value prescribed by the table of Theorem \ref{Q1}.
		
		Looking at the long exact sequence
		\begin{equation}\label{d2}
			\dots \rightarrow H^{*-1,*'}(BSpin_n) \rightarrow H^{*-n-1,*'-[(n+1)/2]}(BSpin_{n+1}) \xrightarrow{\text{$\cdot u_{n+1}$}} H^{*,*'}(BSpin_{n+1}) \rightarrow H^{*,*'}(BSpin_n) \rightarrow \dots
			\end{equation}
		from Diagram \ref{d1} in Section \ref{D} and by induction on degree we know that, in square degrees less than $2^{k(n)}$, in $H(BSpin_{n+1})$ there are only subtle Stiefel-Whitney classes, i.e. the homomorphism $a_{n+1}^*:H(BSO_{n+1}) \rightarrow H(BSpin_{n+1})$ is surjective in these degrees. Let $w$ be a class in $H^{2^{k(n)}-n,2^{{k(n)}-1}-[\frac{n+1}{2}]}(BSO_{n+1})$ such that $a_{n+1}^*(w)\widetilde{\alpha}=\widetilde{h}^*(v_{2^{k(n)}})$, where $\widetilde{\alpha}$ is the Thom class of the morphism $\widetilde{g}$. We point out that $$a_{n+1}^*(u_{n+1}w)=u_{n+1}a_{n+1}^*(w)=\widetilde{f}^*\widetilde{h}^*(v_{2^{k(n)}})=0.$$
		
		The following result, whose proof is reported at the end of this section, enables to complete the induction step. It is indeed the core proposition that permits to conduct the proof of our main theorem. 
		
		\begin{prop}\label{vp}
			Suppose we have a commutative diagram
			$$
			\xymatrix{
				H(BSO_{n+1}) \otimes_H H[v] \ar@{->}[r]^{g^* \otimes l} \ar@{->}[d]_{p_{n+1}} & H(BSO_n) \otimes_H H[c] \ar@{->}[d]^{p_n}\\
				H(BSpin_{n+1}) \ar@{->}[r]^{\widetilde{g}^*} & H(BSpin_n)
			}
			$$
			such that $v$ is a lift from $H(BSpin_n)$ to $H(BSpin_{n+1})$ of a monic homogeneous polynomial $c$ in $v_{2^{k(n)}}$ with coefficients in $H(BSO_n)$, $l(v)=c$ and $\widetilde{h}^*(c)=0$. 
			
			If $Im(\widetilde{h}^*)=Im(p_{n+1}) \cdot \widetilde{h}^*(v_{2^{k(n)}})$, then $ker(p_{n+1})=J_{k(n)}+(u_{n+1}w)$, where $J_{k(n)}$ is $I_{k(n)} \otimes_H H[v]$.
			
			If moreover $ker(\widetilde{h}^*)=Im(\widetilde{g}^*p_{n+1})$, then we get an isomorphism
			$$H(BSO_{n+1})/(I_{k(n)}+(u_{n+1}w)) \otimes_H H[v] \rightarrow H(BSpin_{n+1}).$$
		\end{prop}
		
        So, in order to finalize the proof we only need to find a cohomology class $v$ which satisfies the requirements of Proposition \ref{vp}. There are two possible cases: 1) $\widetilde{h}^*(v_{2^{k(n)}})=0$; 2) $\widetilde{h}^*(v_{2^{k(n)}}) \neq 0$.
		
		\textit{Case 1}: In this case $v_{2^{k(n)}}$ can be lifted to $H(BSpin_{n+1})$ so $w=0$ and we can choose $c=v_{2^{k(n)}}$. It follows that $Im(\widetilde{h}^*)=0=Im(p_{n+1}) \cdot \widetilde{h}^*(v_{2^{k(n)}})$ and $ker(\widetilde{h}^*)=H(BSpin_n)=Im(p_n)=Im(p_n(g^* \otimes l))=Im(\widetilde{g}^*p_{n+1})$, since in this case $p_n$ and $g^* \otimes l$ are surjective. So, by Proposition \ref{vp}, we have that the homomorphism
		$$H(BSO_{n+1})/I_{k(n)} \otimes_H H[v_{2^{k(n)}}] \rightarrow H(BSpin_{n+1})$$
		is an isomorphism. Furthermore, we observe that $k(n+1)=k(n)$ is the value predicted by the table of Theorem \ref{Q1} since $\theta_{k(n)} \in I_{k(n)}$ as it is zero in $H(BSpin_{n+1})$ (because $u_2$ is). This completes the first case.
		
		\textit{Case 2}: In this case we notice that the element $w$ such that $a_{n+1}^*(w)\widetilde{\alpha}=\widetilde{h}^*(v_{2^{k(n)}})$ must be different from $0$. 
		
		\begin{rem}\label{rem2}
			\normalfont
		Since $H(BSpin_n)$ is generated by $v_{2^{k(n)}}^i$ as a $H(BSO_n)$-module (and, so, as a $H(BSO_{n+1})$-module) by induction hypothesis, we have that $Im(\widetilde{h}^*)$ is generated by $\widetilde{h}^*(v_{2^{k(n)}}^i)$ as a $H(BSO_{n+1})$-module.
		\end{rem} 
	
	At this point, we need the following lemmas whose proofs are reported at the end of this section.
		
		\begin{lem}\label{Sqw}
			For any $m$, we have $Sq^ma_{n+1}^*(w) \in \langle a_{n+1}^*(w)\rangle  $, where $\langle a_{n+1}^*(w)\rangle  $ is the $H(BSO_{n+1})$-submodule of $H(BSpin_{n+1})$ generated by $a_{n+1}^*(w)$.
		\end{lem}
		
		\begin{lem}\label{lambdamu}
			For any $m>1$ there exist elements $\lambda_m$ and $\mu_m$ in $H(BSpin_{n+1})$ such that $\widetilde{h}^*(v_{2^{k(n)}}^m)=\lambda_m\widetilde{h}^*(v_{2^{k(n)}})$, $\widetilde{g}^*(\mu_m)=v_{2^{k(n)}}^m+\widetilde{g}^*(\lambda_m)v_{2^{k(n)}}$, $\lambda_m$ and $\mu_m$ are in the image of $H(BSO_{n+1}) \otimes_H H[\mu_2]$ and $\mu_m$ is divisible by $\mu_2$.
		\end{lem}
	
		Now consider the following commutative diagram
		$$
		\xymatrix{
			H(BSO_{n+1}) \otimes_H H[\mu_2] \ar@{->}[r]^(0.41){g^* \otimes l} \ar@{->}[d]_{p_{n+1}} & H(BSO_n) \otimes_H H[v_{2^{k(n)}}^2+\widetilde{g}^*(\lambda_2)v_{2^{k(n)}}] \ar@{->}[d]^{p_n}\\
			H(BSpin_{n+1}) \ar@{->}[r]^{\widetilde{g}^*} & H(BSpin_n).
		}
		$$
		
		From Lemma \ref{lambdamu} and from Remark \ref{rem2} we get that $Im(\widetilde{h}^*)=Im(p_{n+1}) \cdot \widetilde{h}^*(v_{2^{k(n)}})$. Then, by Proposition \ref{vp}, we obtain that $ker(p_{n+1})=J_{k(n)}+(u_{n+1}w)$.
		
		Note that, by looking at the long exact sequence \ref{d2} at the beginning of the proof and by induction on degree, $H^{2^{k(n)},2^{{k(n)}-1}}(BSpin_{n+1})$ consists only of subtle Stiefel-Whitney classes, since we are studying the case that $v_{2^{k(n)}}$ is not covered by $\widetilde{g}^*$ and so $a_{n+1}^*:H(BSO_{n+1}) \rightarrow H(BSpin_{n+1})$ is surjective also in bidegree $(2^{{k(n)}-1})[2^{k(n)}]$. Hence, 
		$$c_{n+1}^*:H^{2^{k(n)},2^{{k(n)}-1}}(BSpin_{n+1}) \rightarrow H^{2^{k(n)}+1,2^{{k(n)}-1}}(Cone(a_{n+1}))$$ 
		is the zero homomorphism and $b_{n+1}^*$ is injective in bidegree of $x_{k(n)}$, from which we deduce that $\theta_{k(n)} \notin I_{k(n)}$ since $x_{k(n)} \notin \langle x_0, \dots ,x_{{k(n)}-1}\rangle  $ by Lemma \ref{x}. Therefore, by observing that $ker(p_{n+1})=J_{k(n)}+(u_{n+1}w)$ and $p_{n+1}(\theta_{k(n)})=0$ we get that $\theta_{k(n)}+u_{n+1}w \in I_{k(n)}$ which implies that $ker(p_{n+1})=J_{{k(n)}+1}$. 
		
		In order to finish, we need the following lemma whose proof is reported at the end of this section.
		
		\begin{lem}\label{hg}
			The following identification holds in $H(BSpin_n)$: 
			$$ker(\widetilde{h}^*)=Im(\widetilde{g}^*p_{n+1}).$$
		\end{lem}
		
		Denote by $v_{2^{{k(n)}+1}}$ the class $\mu_2$, then by Proposition \ref{vp} we get that the homomorphism
		$$H(BSO_{n+1})/I_{{k(n)}+1} \otimes_H H[v_{2^{{k(n)}+1}}] \rightarrow H(BSpin_{n+1})$$
		is an isomorphism. Moreover, since $\theta_{k(n)} \notin I_{k(n)}$ we have that $\rho_{k(n)} \notin I^{top}_{k(n)}$ from which it follows that $k(n+1)=k(n)+1$ by Remark \ref{rem1}. This completes the proof of the second case.  
	\end{proof}

We conclude this section by providing the proofs of Proposition \ref{vp} and Lemmas \ref{Sqw}, \ref{lambdamu} and \ref{hg}. We remind the reader that in all the following proofs there is a running inductive assumption (see \ref{ih} at the beginning of the proof of Theorem \ref{MQ2}).

\begin{proof}[Proof of Proposition \ref{vp}]
	We want to prove that $p_{n+1}(x)=0$ implies $x \in J_{k(n)}+(u_{n+1}w)$ for any $x$. We proceed by induction on the square degree of $x$. The induction base is guaranteed by the fact that the degree $2$ part of the kernel is generated by $u_2$ and $u_2 \in I_{k(n)}$. Now, suppose that the claim is true for square degrees less than the square degree of $x$. We can write $x$ as $\sum_{j=0}^m \phi_jv^j$ for some $\phi_j \in H(BSO_{n+1})$. Notice that $p_n(g^* \otimes l)(x)=\widetilde{g}^*p_{n+1}(x)=0$, therefore $\sum_{j=0}^m p_ng^*(\phi_j)c^j=0$. From this we deduce that $p_ng^*(\phi_j)=0$ for any $j$ since by hypothesis $c$ is a monic polynomial in $v_{2^{k(n)}}$ in $H(BSpin_n)$, so $g^*(\phi_j) \in I'_{k(n)}$. Then, $\phi_j \in I_{k(n)}+(u_{n+1})$ since $\phi_j+ig^*(\phi_j) \in (u_{n+1})$ and $i(I'_{k(n)}) \subset I_{k(n)}+(u_{n+1})$, where $i$ is the inclusion of $H(BSO_n)$ in $H(BSO_{n+1})$ sending $u_l$ to $u_l$. Hence, there are $\psi_j \in H(BSO_{n+1})$ such that $\phi_j+u_{n+1}\psi_j \in I_{k(n)}$, from which it follows that $x+u_{n+1}z \in J_{k(n)}$ where $z=\sum_{j=0}^m \psi_jv^j$. Hence, $u_{n+1}p_{n+1}(z)=0$ which implies that 
	$$p_{n+1}(z)\widetilde{\alpha} \in Im(\widetilde{h}^*)=Im(p_{n+1}) \cdot \widetilde{h}^*(v_{2^{k(n)}})=Im(p_{n+1}) \cdot p_{n+1}(w){\widetilde{\alpha}}$$
	from which we deduce that there exists an element $y$ in $H(BSO_{n+1}) \otimes_H H[v]$ such that $p_{n+1}(z)=p_{n+1}(yw)$. Therefore, $z+yw \in J_{k(n)}+(u_{n+1}w)$ by induction hypothesis. It follows that $z \in J_{k(n)}+(w)$ and $x \in J_{k(n)}+(u_{n+1}w)$.
	
	In order to prove the last part of the proposition we show by induction on degree that, if $ker(\widetilde{h}^*)=Im(\widetilde{g}^*p_{n+1})$, then $p_{n+1}$ is surjective. The induction basis comes from the fact that, in square degree $\leq 2$, $H(BSpin_{n+1})$ is the same as the cohomology of the point. Take an element $x$ and suppose that $p_{n+1}$ is surjective in square degrees less than the square degree of $x$. From $\widetilde{g}^*(x) \in ker(\widetilde{h}^*)=Im(\widetilde{g}^*p_{n+1})$ it follows that there is an element $\chi$ in $H(BSO_{n+1}) \otimes_H H[v]$ such that $\widetilde{g}^*(x)=\widetilde{g}^*p_{n+1}(\chi)$. Therefore, $x+p_{n+1}(\chi)=u_{n+1}z$ for some $z \in H(BSpin_{n+1})$. By induction hypothesis $z=p_{n+1}(\zeta)$ for some element $\zeta \in H(BSO_{n+1}) \otimes_H H[v]$, hence $x=p_{n+1}(\chi+u_{n+1}\zeta)$, which is what we aimed to show. 
\end{proof}

\begin{proof}[Proof of Lemma \ref{Sqw}]
	We proceed by induction on $m$. For $m=0$ there is nothing to prove and for $m>2^{k(n)}-n$ we have that $Sq^mw=0$ by Corollary \ref{corWu}. Suppose the statement is true for integers less than $m \leq 2^{k(n)}-n$. Then, 
	$$Sq^m(u_{n+1}w)=\sum_{j=0}^m \tau^{j \: mod2}Sq^ju_{n+1}Sq^{m-j}w=\sum_{j=0}^m \tau^{j \: mod2}u_ju_{n+1}Sq^{m-j}w$$
	from which it follows, by applying $a_{n+1}^*$ and by noting that $u_{n+1}a_{n+1}^*(w)=0$, that $$0=Sq^m(u_{n+1}a_{n+1}^*(w))=\sum_{j=0}^m \tau^{j \: mod2}u_ju_{n+1}Sq^{m-j}a_{n+1}^*(w)=u_{n+1}Sq^ma_{n+1}^*(w)$$
	where all the elements but one in the sum disappear since by induction (on $m$) hypothesis $Sq^{m-j}a_{n+1}^*(w) \in \langle a_{n+1}^*(w)\rangle  $ for $j>0$ and $u_{n+1}a_{n+1}^*(w)=0$.
	
	Hence, $\widetilde{f}^*(Sq^ma_{n+1}^*(w)\widetilde{\alpha})=0$, from which it follows that $Sq^ma_{n+1}^*(w)\widetilde{\alpha} \in Im(\widetilde{h}^*)$. By Remark \ref{rem2}, we obtain that $Sq^ma_{n+1}^*(w)\widetilde{\alpha}=\sum_{i \geq 1} \phi_i \widetilde{h}^*(v_{2^{k(n)}}^i)$ for some $\phi_i \in H(BSO_{n+1})$. But, for any $i>1$, the square degree of $\widetilde{h}^*(v_{2^{k(n)}}^i)$ is greater than that of $Sq^ma_{n+1}^*(w)\widetilde{\alpha}$. We deduce that $Sq^ma_{n+1}^*(w)\widetilde{\alpha}=\phi_1 \widetilde{h}^*(v_{2^{k(n)}})$, from which it follows that
	$$Sq^ma_{n+1}^*(w)=\phi_1 a_{n+1}^*(w) \in \langle a_{n+1}^*(w)\rangle  $$
	which is what we aimed to prove. 
\end{proof}

	\begin{proof}[Proof of Lemma \ref{lambdamu}]
	We notice that, by Proposition \ref{SteenrodThom} and Corollary \ref{corWu},
	\begin{align*}
		\widetilde{h}^*(v_{2^{k(n)}}^2)&=\widetilde{h}^*(Sq^{2^{k(n)}}v_{2^{k(n)}})=Sq^{2^{k(n)}}(a_{n+1}^*(w)\widetilde{\alpha})\\
		&=(\tau^{n \: mod 2}Sq^{2^{k(n)}-n}a_{n+1}^*(w)u_n+\tau^{(n+1) \: mod 2}Sq^{2^{k(n)}-n-1}a_{n+1}^*(w)u_{n+1})\widetilde{\alpha}\\
		&=\tau^{n \: mod 2}Sq^{2^{k(n)}-n}a_{n+1}^*(w)u_n\widetilde{\alpha}
	\end{align*}
	since, by Lemma \ref{Sqw}, $Sq^{2^{k(n)}-n-1}a_{n+1}^*(w)\in \langle a_{n+1}^*(w)\rangle $ and $u_{n+1}a_{n+1}^*(w)=0$. Now, note that Lemma \ref{Sqw} also implies that $Sq^{2^{k(n)}-n}a_{n+1}^*(w)=ra_{n+1}^*(w)$ for some $r \in H(BSO_{n+1})$ which allows us to define the element $\lambda_2$ in $H(BSO_{n+1})$ as $\lambda_2=\tau^{n \: mod2}ru_n$. Then, we immediately obtain that $\widetilde{h}^*(v_{2^{k(n)}}^2)=\lambda_2\widetilde{h}^*(v_{2^{k(n)}})$. 
	
	Denote by $\mu_2$ a lift of $v_{2^{k(n)}}^2+\widetilde{g}^*(\lambda_2)v_{2^{k(n)}}$ to $H(BSpin_{n+1})$. Suppose the statement is true for $m$, so, taking into account that $\widetilde{h}^*$ is $H(BSpin_{n+1})$-linear, we have 
	$$\widetilde{h}^*(v_{2^{k(n)}}^{m+1})=\widetilde{h}^*((v_{2^{k(n)}}^m+\widetilde{g}^*(\lambda_m)v_{2^{k(n)}})v_{2^{k(n)}}+\widetilde{g}^*(\lambda_m)v_{2^{k(n)}}^2)=\mu_m\widetilde{h}^*(v_{2^{k(n)}})+\lambda_m\lambda_2\widetilde{h}^*(v_{2^{k(n)}}).$$
	
	Denote by $\lambda_{m+1}$ the element $\mu_m+\lambda_m\lambda_2$ and by $\mu_{m+1}$ the element $\lambda_m\mu_2$. Then, 
	$$\widetilde{g}^*(\mu_{m+1})=\widetilde{g}^*(\lambda_m\mu_2)=\widetilde{g}^*(\lambda_m)(v_{2^{k(n)}}^2+\widetilde{g}^*(\lambda_2)v_{2^{k(n)}})=\widetilde{g}^*(\lambda_m)v_{2^{k(n)}}^2+\widetilde{g}^*(\lambda_{m+1}+\mu_m)v_{2^{k(n)}}=$$
	$$(\widetilde{g}^*(\lambda_m)v_{2^{k(n)}}+\widetilde{g}^*(\lambda_{m+1})+v_{2^{k(n)}}^m+\widetilde{g}^*(\lambda_m)v_{2^{k(n)}})v_{2^{k(n)}}=v_{2^{k(n)}}^{m+1}+\widetilde{g}^*(\lambda_{m+1})v_{2^{k(n)}}$$
	and the proof is complete. 
\end{proof}

\begin{proof}[Proof of Lemma \ref{hg}]
	Let us set $\mu_1=\lambda_0=0$ and $\mu_0=\lambda_1=1$. Let $x$ be an element of the kernel of $\widetilde{h}^*$. We can write $x$ as $\sum_{j=0}^m\gamma_jv_{2^{k(n)}}^j$ with $\gamma_j \in H(BSO_{n+1})$. Then, by Lemma \ref{lambdamu}, 
	$$x=\sum_{j=0}^m\gamma_j(\widetilde{g}^*(\mu_j)+\widetilde{g}^*(\lambda_j)v_{2^{k(n)}})$$
	from which it follows by applying $\widetilde{h}^*$ that $\sum_{j=0}^m\gamma_j\lambda_j\widetilde{h}^*(v_{2^{k(n)}})=0$. Denote by $\sigma$ the element $\sum_{j=0}^m\gamma_j\lambda_j$ in $H(BSO_{n+1}) \otimes_H H[\mu_2]$. From 
	$$p_{n+1}(\sigma w)\widetilde{\alpha}=p_{n+1}(\sigma) a_{n+1}^*(w)\widetilde{\alpha}=p_{n+1}(\sigma) \widetilde{h}^*(v_{2^{k(n)}})=0$$ 
	we get $\sigma w \in J_{{k(n)}+1}$, since $ker(p_{n+1})=J_{{k(n)}+1}$. Thus, $\sigma w=\sum_{j=0}^{k(n)}\sigma_j\theta_j$ for some $\sigma_j \in H(BSO_{n+1}) \otimes_H H[\mu_2]$ and, multiplying by $u_{n+1}$, we obtain that $u_{n+1}\sigma w+u_{n+1}\sigma_{k(n)} \theta_{k(n)} \in J_{k(n)}$. On the other hand, $\theta_{k(n)}+u_{n+1}w \in I_{k(n)}$, from which it follows by multiplying by $\sigma$ that $\sigma \theta_{k(n)}+u_{n+1}\sigma w \in J_{k(n)}$. Hence, $(\sigma+u_{n+1}\sigma_{k(n)})\theta_{k(n)} \in J_{k(n)}$. By Theorem \ref{MQ1} we deduce that $\sigma+u_{n+1}\sigma_{k(n)} \in J_{k(n)}$, from which it follows that $\sigma \in J_{k(n)}+(u_{n+1})$. Therefore, $\widetilde{g}^*p_{n+1}(\sigma)=0$ in $H(BSpin_n)$ and
	$$x=\sum_{j=0}^m\gamma_j\widetilde{g}^*(\mu_j) \in Im(\widetilde{g}^*p_{n+1})$$
	as we aimed to show. 
\end{proof}

\section{The motivic cohomology ring of $BG_2$}
	
	In this section, we use our main result to compute the motivic cohomology ring of the Nisnevich classifying space of $G_2$. This enables us to obtain motivic invariants for $G_2$-torsors, i.e. octonion algebras.
	
	We start by noticing that there is a fiber sequence
	$$A_{q_8} \rightarrow BG_2 \rightarrow BSpin_7$$
	(see \cite[Proposition 3.1.1]{AHW}). We can exploit this sequence and previous results to compute the motivic cohomology ring of $BG_2$. Before proceeding, note that by Theorem \ref{MQ2} we know the complete description of $H(BSpin_7)$.
	
	\begin{thm}\label{g2}
		The motivic cohomology ring of $BG_2$ is completely described by
		$$H(BG_2) \cong H[u_4,u_6,u_7].$$
	\end{thm}
	\begin{proof}
		By applying Proposition \ref{Thom1} to the coherent morphism $\widehat{B}G_2 \rightarrow BSpin_7$ whose fiber is isomorphic to $A_{q_8}$ we get a Gysin long exact sequence of $H(BSpin_7)$-modules in motivic cohomology
		$$\dots \rightarrow H^{p-8,q-4}(BSpin_7) \rightarrow H^{p,q}(BSpin_7) \rightarrow H^{p,q}(BG_2)\rightarrow H^{p-7,q-4}(BSpin_7) \rightarrow \dots.$$
		
		Hence, in order to be able to describe $H(BG_2)$ we need only to understand where $1$ is sent under the morphism $H^{p-8,q-4}(BSpin_7) \rightarrow H^{p,q}(BSpin_7)$. Recall that from Theorem \ref{MQ2} we have that $H(BSpin_7) \cong H[u_4,u_6,u_7,v_8]$.
		
		Note that there is a commutative diagram
		$$
		\xymatrix{
			BSL_2 \ar@{->}[r]^(0.35){\Delta} \ar@{<->}[d]_{\cong} & BSL_2 \times BSL_2  \ar@{->}[r] \ar@{<->}[d]_{\cong} & BSL_4 \ar@{<->}[d]^{\cong}\\
			BSpin_3 \ar@{->}[r] & BSpin_4 \ar@{->}[r] & BSpin_6
		}
		$$
		where all the vertical maps are induced by the sporadic isomorphisms $SL_2 \cong Spin_3$, $SL_2 \times SL_2 \cong Spin_4$ and $SL_4 \cong Spin_6$. Recall that $H(BSL_n) \cong H[c_2,\dots,c_n]$ where $c_i$ is the Chern class in bidegree $(i)[2i]$ (see \cite[Proposition 3.2.7]{SV} which works in the same way for $BSL_n$). Then, we get a commutative diagram of motivic cohomology rings
		$$
		\xymatrix{
			H[c] \ar@{<-}[r]^(0.5){\Delta^*} \ar@{<->}[d]_{\cong} & H[c',c'']  \ar@{<-}[r] \ar@{<->}[d]_{\cong} & H[c_2,c_3,c_4] \ar@{<->}[d]^{\cong}\\
			H[v_4] \ar@{<-}[r] & H[u_4,v_4] \ar@{<-}[r] & H[u_4,u_6,v_8]
		}
		$$
		where the first vertical arrow identifies $c$ with $v_4$, the last vertical arrow identifies $c_2$ with $u_4$ and $c_3$ with $u_6$, $c'$ and $c''$ are sent both to $c$ and $c_4$ maps to $c'c''$. Now, note that $H(BSpin_6) \rightarrow H(BSpin_4)$ factors through $H(BSpin_5) \cong H[u_4,v_8]$. Since $\widetilde{h}^*:H(BSpin_4) \rightarrow H(BSpin_5)$ is nontrivial, the class $w$ defined just before Proposition \ref{vp} in the proof of Theorem \ref{MQ2} is equal to $1$ and, so, by Lemma \ref{lambdamu} we know that $\lambda_2=u_4$. Hence, $v_8$ maps to $v_4^2+u_4v_4$. Moreover, since $c$ is identified with $v_4$ and both $c'$ and $c''$ map to $c$, the second vertical arrow identifies $c'$ and $c''$ with $v_4$ and $v_4+u_4$. It follows that $c'c''$ is identified with $v_4^2+u_4v_4$. Therefore, $c_4$ is identified with $v_8$ since they are the only classes in their degrees that restrict to the same element.
		
		Moreover, we can notice that there is a cartesian square of simplicial schemes given by
		$$
		\xymatrix{
			BSL_3 \ar@{->}[r] \ar@{->}[d] & BSL_4 \cong BSpin_6 \ar@{->}[d]\\
			BG_2 \ar@{->}[r]& BSpin_7.
		}
		$$
		
		Recall that $H(BSL_3) \cong H[c_2,c_3]$, $H(BSL_4) \cong H[c_2,c_3,c_4]$ and $H(BSpin_6) \cong H[u_4,u_6,v_8]$ with the identifications $c_2=u_4$, $c_3=u_6$ and $c_4=v_8$ discussed above. Hence, by Corollary \ref{Thom3} we easily deduce that the morphism $H^{p-8,q-4}(BSpin_7) \rightarrow H^{p,q}(BSpin_7)$ sends $1$ to an element which maps to $c_4$ via the morphism $H(BSpin_7) \rightarrow H(BSL_4)$. Therefore, $H^{p-8,q-4}(BSpin_7) \rightarrow H^{p,q}(BSpin_7)$ can only be multiplication by $v_8+\{a\}u_7$, from which it immediately follows that $H(BG_2) \cong H[u_4,u_6,u_7]$, which is what we aimed to prove. 
	\end{proof}
	
	\section{Relations among subtle classes for $Spin_n$-torsors}
	
	In this section we deduce, just from the triviality of $u_2$ in the motivic cohomology of $BSpin_n$, some very simple relations among subtle classes in the motivic cohomology of the \v{C}ech simplicial scheme associated to a $Spin_n$-torsor. This provides information about the kernel invariant (see \cite[2.7.1]{SV}) of quadratic forms from $I^3$. 
	
	We start by recalling that there exists a map from $Spin_n$-torsors over the point to $n$-dimensional quadratic forms from $I^3$ which is surjective and has trivial kernel, where $I$ is the fundamental ideal in the Witt ring. Moreover, we have the following commutative diagram
	$$
	\xymatrix{
		\check{C}(X_q) \ar@{->}[r] \ar@{->}[d] & BSpin_n \ar@{->}[r] \ar@{->}[d] & BSO_n \ar@{->}[d]\\
		Spec(k) \ar@{->}[r]^(0.45){q} & B_{\acute{e}t}Spin_n \ar@{->}[r] & B_{\acute{e}t}SO_n
	}
	$$
	for any $n$-dimensional $q \in I^3$ and all above-diagonal classes in $H(BSpin_n)$ coming from the \'etale classifying space trivialise in $H(\check{C}(X_q))$, since the above-diagonal cohomology of a point is zero. Here $\check{C}(X_q)$ is the \v{C}ech simplicial scheme associated to the torsor $X_q=Iso\{q \leftrightarrow q_n\}$. In particular Chern classes $c_i(q)=\tau^{i \: mod2}u_i(q)^2$ are zero, as these are coming from the \'etale space (see \cite{SV} just before Thorem 3.1.1).
	
	From previous remarks we obtain the following proposition, which provides us with relations among subtle characteristic classes for quadratic forms from $I^3$.
	
	\begin{prop}\label{formula}
		For any $n$-dimensional $q \in I^3$, the following relations hold in $H(\check{C}(X_q))$ $$\sum_{h=0}^{2^j}u_{2^j-h}(q)u_{2^j+1+h}(q)=0$$ for any $j$ satisfying $2^j+1 \leq n$.
	\end{prop}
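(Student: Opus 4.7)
The displayed sum equals $Sq^{2^j}u_{2^j+1}(q)$ by the motivic Wu formula (Proposition~\ref{Wu}), so it suffices to show this class vanishes in $H(\check{C}(X_q))$. The plan is to reduce everything to the single identity $u_2(q)=0$, which is obtained by pulling back Lemma~\ref{u2} along the canonical map $\check{C}(X_q)\to BSpin_n$. Since Steenrod squares are cohomology operations, they commute with pullbacks and are additive; applying any admissible $Sq^I$ to $u_2(q)=0$ therefore yields zero. In particular, $\theta_j(q)=Sq^{2^{j-1}}\cdots Sq^1 u_2(q)=0$ in $H(\check{C}(X_q))$ for all $j\geq 0$.

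The argument then proceeds by induction on $j$, with the aim of showing simultaneously that $u_{2^j+1}(q)=0$ (for $j$ in a suitable range, starting from the base cases $u_2(q)=0$ and $u_3(q)=Sq^1 u_2(q)=0$) and that $Sq^{2^j}u_{2^j+1}(q)=0$. In $H(BSO_n)$ one has the identity $\theta_{j+1}=Sq^{2^j}\theta_j$ together with a recursive Wu-formula decomposition $\theta_j=u_{2^j+1}+R_j$, where $R_j$ collects the lower-order products produced by the successive applications of Wu. The key structural point to verify is that every monomial in $R_j$ contains, as a factor, a subtle class already shown to pull back to zero in $H(\check{C}(X_q))$, so that $R_j(q)=0$. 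Granting this,
\begin{equation*}
0=\theta_{j+1}(q)=Sq^{2^j}u_{2^j+1}(q)+Sq^{2^j}(R_j(q))=Sq^{2^j}u_{2^j+1}(q),
\end{equation*}
which is the proposition for index $j$. Expanding $Sq^{2^j}u_{2^j+1}$ by Wu and using the already established vanishings then allows one to extract $u_{2^{j+1}+1}(q)=0$ and feed it into the next induction step.

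The main obstacle lies precisely in the structural claim about $R_j$. For small $j$ it is transparent, because every monomial in $R_j$ visibly involves one of the classes $u_2,u_3,u_5,u_9$, all of which the induction shows to vanish in $H(\check{C}(X_q))$. For larger $j$, however, iterated applications of the motivic Cartan formula produce mixed monomials such as $u_7u_{10}$ or $u_3^3$ that are not obviously of this form, and some care with motivic Cartan correction terms involving $\tau$ is also required. Controlling these terms may require either refining the induction with additional simultaneous vanishing statements, or invoking the auxiliary relations available in $H(\check{C}(X_q))$ coming from the Chern-class vanishing $c_i(q)=\tau^{i\bmod 2}u_i(q)^2=0$ and from the trivialisation of above-diagonal classes pulled from the \'etale classifying space $B_{\acute{e}t}Spin_n$.
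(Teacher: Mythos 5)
Your opening observation is exactly right: by the Wu formula (Proposition~\ref{Wu}), the displayed sum is $Sq^{2^j}u_{2^j+1}(q)$, and $\theta_j(q) = Sq^{2^{j-1}}\cdots Sq^1 u_2(q) = 0$ for all $j$ because $u_2(q) = 0$ by Lemma~\ref{u2}. The paper makes both of these observations too. The difficulty, which you correctly identify, lies in passing from $\theta_{j+1}(q) = 0$ to $Sq^{2^j}u_{2^j+1}(q) = 0$; you propose to control the difference $R_j = \theta_j - u_{2^j+1}$ in $H(BSO_n)$ by showing that every monomial of $R_j$ contains a subtle class already known to vanish in $H(\check{C}(X_q))$.

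That structural claim is false from $j = 4$ onwards, and the associated auxiliary assertion $u_{2^j+1}(q) = 0$ also fails beyond $j = 3$. Concretely, $\theta_4 = Sq^8\theta_3$ and $Sq^8 u_9$ contributes the monomial $u_7 u_{10}$ to $R_4$; this has no factor among $u_2, u_3, u_5, u_9$, and after using $u_2(q)=u_3(q)=u_5(q)=u_9(q)=0$ one is left with $u_{17}(q) = u_7(q)u_{10}(q) + u_6(q)u_{11}(q) + u_4(q)u_{13}(q)$, which need not vanish: the proposition only places $u_{2^j+1}(q)$ in the ideal generated by smaller subtle classes, which is precisely what Corollary~\ref{corJ} records. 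So $R_j(q) \neq 0$ for $j \geq 4$ and your induction, as set up, cannot be completed.

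The fix, which your final paragraph correctly intuits via the Chern-class vanishing, is to run the induction on the relation inside $H(\check{C}(X_q))$ rather than on the monomial expansion of $\theta_j$ in $H(BSO_n)$. Assuming inductively that $\theta_j(q) = \sum_{h=0}^{2^{j-1}}u_{2^{j-1}-h}(q)u_{2^{j-1}+1+h}(q)$, apply $Sq^{2^j}$ and expand each product $u_a(q)u_b(q)$ with $a + b = 2^j + 1$, $a \geq 1$, by the Cartan formula. Since $Sq^iu_a = 0$ for $i > a$ and $Sq^{i'}u_b = 0$ for $i' > b$, the only surviving Cartan terms are $u_a(q)^2\, Sq^{b-1}u_b(q)$ and $Sq^{a-1}u_a(q)\, u_b(q)^2$, each with the appropriate $\tau$-power, and these vanish because $\tau^{a \bmod 2}u_a(q)^2 = c_a(q) = 0$ and likewise for $b$. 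Only the $h = 2^{j-1}$ term, which is $Sq^{2^j}u_{2^j+1}(q)$, survives. The crucial point is that one never needs $R_j(q) = 0$: one needs only that $Sq^{2^j}$ of the lower part of the sum vanishes in $H(\check{C}(X_q))$, which is strictly weaker and follows directly from $c_i(q) = 0$.
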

	\begin{proof}
		We will actually prove that
		$$\theta_{j+1}(q)=\sum_{h=0}^{2^j}u_{2^j-h}(q)u_{2^j+1+h}(q)$$
		and the result will follow by recalling that $u_2(q)=0$. For $j=0$ and $j=1$, by Wu formula (Proposition \ref{Wu}), we have respectively $\theta_1(q)=u_3(q)$ and $\theta_2(q)=u_2(q)u_3(q)+u_5(q)$, which provide our induction basis. Suppose the statement holds for $\theta_j(q)$ with $j \geq 2$, then by Cartan formula and Proposition \ref{Wu} we have that
		\begin{align*}
			&\theta_{j+1}(q)=Sq^{2^j}\theta_j(q)=Sq^{2^j}\sum_{h=0}^{2^{j-1}}u_{2^{j-1}-h}(q)u_{2^{j-1}+1+h}(q)\\
			&=\sum_{h=0}^{2^{j-1}-1}(\tau^{hmod2}u_{2^{j-1}-h}(q)^2Sq^{2^{j-1}+h}u_{2^{j-1}+1+h}(q)\\
			&+\tau^{(h+1)mod2}Sq^{2^{j-1}-h-1}u_{2^{j-1}-h}(q)u_{2^{j-1}+1+h}(q)^2)+Sq^{2^j}u_{2^j+1}(q)\\
			&=\sum_{h=0}^{2^{j-1}-1}(c_{2^{j-1}-h}(q)Sq^{2^{j-1}+h}u_{2^{j-1}+1+h}(q)+Sq^{2^{j-1}-h-1}u_{2^{j-1}-h}(q)c_{2^{j-1}+1+h}(q))\\
			&+\sum_{h=0}^{2^j}u_{2^j-h}(q)u_{2^j+1+h}(q)=\sum_{h=0}^{2^j}u_{2^j-h}(q)u_{2^j+1+h}(q).
		\end{align*} 
	
	\end{proof}
	
	In other words, we obtain that 
	$$u_{2^j+1}(q)=\sum_{h=0}^{2^{j-1}-1}u_{2^{j-1}-h}(q)u_{2^{j-1}+1+h}(q)$$ 
	for any $j$ satisfying $2^j+1 \leq n$.
	
	In \cite{SV}, Smirnov and Vishik highlighted the deep relation between subtle Stiefel-Whitney classes and the $J$-invariant of quadrics defined in \cite{V}. More precisely, they proved the following result.
	
	\begin{thm}\label{J}
		Let $q$ be an $n$-dimensional quadratic form, $p=q$, for even $n$, and $p=q \perp \langle det_{\pm}(q) \rangle$, for odd $n$. Then, 
		$$u_{2j+1}(p) \in (u_{2l+1}(p)|0 \leq l<j) \Rightarrow j \in J(q).$$
	\end{thm}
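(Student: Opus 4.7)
The plan is to reduce the implication to Vishik's rationality criterion for the $J$-invariant and then translate the hypothesised relation among odd subtle classes into the rationality of the controlling cycle on the projective quadric $X_p$. Recall that, under the hypothesis on $p$, the form $p$ is even-dimensional with trivial discriminant, so $X_p$ has a well-behaved motivic decomposition over an algebraic closure, and $j \in J(q)$ is characterised by the rationality (i.e.\ defined-over-$k$-ness) of a distinguished generator $l_j$ in a specific summand of the Chow/motivic cohomology of $X_p|_{\bar k}$.

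First I would set up the bridge between $H(\check{C}(X_p))$ and the motivic cohomology of $X_p$. The \v Cech simplicial scheme $\check{C}(X_p)$ becomes weakly equivalent to a point after any field extension splitting $p$, and the techniques of Section \ref{D} (Propositions \ref{SV} and \ref{Thom1}) applied to the tower $\check{C}(X_p) \to BSpin_n$ yield a comparison between subtle classes on the \v Cech scheme and specific elements in $H(X_p)$. Under this comparison, the odd subtle class $u_{2j+1}(p)$ corresponds, modulo products of even subtle classes (which vanish on $\check{C}(X_p)$ because they are Chern classes, as highlighted just before Proposition \ref{formula}), to the mod-$2$ reduction of the Vishik generator $l_j$.

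Second, I would translate the hypothesis. If $u_{2j+1}(p) = \sum_{l<j} \phi_l \cdot u_{2l+1}(p)$ in $H(\check{C}(X_p))$, then after the identification above the class $l_j$ is exhibited in $H(X_p)$ as a $k$-rational combination of the $l_l$ with $l<j$, together with even classes that are Chern classes of vector bundles defined over $k$. By induction on $j$, either the relevant $l_l$ are already rational or their appearance can be absorbed into classes known to lie in the image from $H(X_p)$; in both cases the conclusion is that $l_j$ itself is rational, which is precisely the defining condition for $j \in J(q)$. The base case $j=0$ reduces to $u_1(p) = 0$ and gives $0 \in J(q)$ automatically.

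The main obstacle will be the precise matching between $u_{2j+1}(p)$ and $l_j$. The comparison between $H(\check{C}(X_p))$ and $H(X_p)$ is governed by Thom-class twists and by a nontrivial Steenrod action (Propositions \ref{Thom1}, \ref{Thom2}, \ref{SteenrodThom} and Corollary \ref{corWu}), so one must verify that the identification is exact enough, up to classes that are forced to be rational or to vanish, to guarantee that the hypothesis really controls the rationality of $l_j$ and not of some unrelated cycle in the same bidegree. The modification $p = q$ versus $p = q \perp \langle \det_{\pm}(q)\rangle$ is exactly what is needed to handle both parities uniformly in this bookkeeping, and once the identification is established the implication is immediate from Vishik's definition of $J(q)$.
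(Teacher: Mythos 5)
The paper does not prove this statement: Theorem \ref{J} is recalled verbatim from Smirnov--Vishik \cite[Corollary~3.2.22]{SV} and used as a black box (its only role here is to feed Proposition~\ref{formula} into Corollary~\ref{corJ}). So there is no ``paper's own proof'' to compare against, and your task was really to reconstruct an argument from \cite{SV}.

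As a reconstruction, your outline is in the right spirit --- \cite{SV} does tie the rationality of the generators $l_j$ on the split quadric $X_p$ to the behaviour of odd subtle classes on the \v Cech simplicial scheme --- but it contains one concrete error and leaves the decisive step unestablished. The error: you claim the even subtle classes vanish on $\check{C}(X_p)$ ``because they are Chern classes.'' That is not what the paper says. Immediately before Proposition~\ref{formula} it is the \emph{squares} $c_i(q)=\tau^{i\bmod 2}u_i(q)^2$ that are Chern classes and hence vanish; the even subtle classes $u_{2i}(p)$ themselves are not Chern classes and need not die on $\check{C}(X_p)$. Since your proposed identification of $u_{2j+1}(p)$ with (the mod-$2$ reduction of) $l_j$ was supposed to hold ``modulo products of even subtle classes,'' this defect propagates into the main step. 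More seriously, that main step --- that $u_{2j+1}(p)$ matches $l_j$ up to terms whose presence on the left forces rationality on the right --- is exactly the content of \cite[Corollary~3.2.22]{SV} and is asserted, not proved, in your sketch; you flag it yourself as ``the main obstacle.'' Until that identification (and the claim that the inductive absorption of the lower $l_l$'s can be carried out) is actually proved, this is a plan for a proof rather than a proof.
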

\begin{proof}
See \cite[Corollary 3.2.22]{SV}. 
\end{proof}
	
	From the previous theorem and from Proposition \ref{formula} we immediately deduce the following well known corollary.
	
	\begin{cor}\label{corJ}
		For any $n$-dimensional $q \in I^3$, $2^{j-1} \in J(q)$ for any $j$ satisfying $2^j+1 \leq n$.
	\end{cor}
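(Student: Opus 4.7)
My plan is to derive the corollary as an immediate consequence of Proposition \ref{formula} combined with Theorem \ref{J}, taking the integer called $j$ in Theorem \ref{J} to be $2^{j-1}$ in the notation of the corollary.

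The first step is to observe that Proposition \ref{formula} applies not only to $q$ but also to the form $p$ entering Theorem \ref{J}. Indeed, for even $n$ we simply have $p = q$; for odd $n$ the form $p = q \perp \langle 1 \rangle$ (since $q \in I^3$ has trivial discriminant, so $\det_{\pm}(q) = 1$) is again in $I^3$ and of dimension $n+1 \geq 2^j + 1$. Substituting $j \mapsto j-1$ in Proposition \ref{formula} thus yields
\[
u_{2^j+1}(p) = \sum_{h=0}^{2^{j-1}-1} u_{2^{j-1}-h}(p)\, u_{2^{j-1}+1+h}(p)
\]
in $H(\check{C}(X_p))$.

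The second step is a short parity argument: in each summand the two subscripts add up to $2^j + 1$, which is odd, so exactly one of $u_{2^{j-1}-h}(p)$ and $u_{2^{j-1}+1+h}(p)$ has odd index, and that odd index is bounded above by $2^j - 1$. Consequently every term lies in the ideal $(u_{2l+1}(p) \mid 0 \leq l < 2^{j-1})$, hence so does $u_{2^j+1}(p)$. Applying Theorem \ref{J} with its parameter $j$ set equal to $2^{j-1}$ then immediately yields $2^{j-1} \in J(q)$.

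The statement is essentially a direct corollary of the machinery already developed; the only bookkeeping is the parity check on the summands together with the verification that $p \in I^3$ in the odd-dimensional case, and neither presents a genuine obstacle.
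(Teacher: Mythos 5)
Your argument is correct, and it supplies precisely the bookkeeping that the paper elides when it says the corollary follows ``immediately'' from Theorem~\ref{J} and Proposition~\ref{formula}: the reformulated relation $u_{2^j+1}(p)=\sum_{h=0}^{2^{j-1}-1}u_{2^{j-1}-h}(p)\,u_{2^{j-1}+1+h}(p)$ together with the parity observation that in each summand exactly one index is odd and at most $2^j-1$, so the right-hand side lies in the ideal $(u_{2l+1}(p)\mid 0\le l<2^{j-1})$, whence Theorem~\ref{J} with parameter $2^{j-1}$ gives $2^{j-1}\in J(q)$. This is the intended argument, so your proof and the paper's coincide.

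Two small remarks. First, your phrase ``substituting $j\mapsto j-1$'' is a slightly roundabout way of invoking what is really the displayed remark immediately following Proposition~\ref{formula}; after the substitution you also need to pull out the $h=2^{j-1}$ term $u_0(p)\,u_{2^j+1}(p)$ and use $u_0=1$, which you do implicitly. Second, the odd-$n$ branch you treat carefully is in fact vacuous under the hypothesis as literally stated: a Witt class in $I^3\subset I$ has even dimension, so $n$ is even and $p=q$; there is then no need to verify $p\in I^3$ separately or to worry about passing from $\check{C}(X_q)$ to $\check{C}(X_p)$. Including the odd case does no harm, but it is not needed.
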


\section{The Chern subring of $Ch(B_{\acute{e}t}Spin_n)$}

In this last section we obtain from the structure of $H(BSpin_n)$ some information about the subring generated by Chern classes (coming from the representation given by the map $Spin_n \rightarrow SO_n$) of the Chow ring $Ch(B_{\acute{e}t}Spin_n)$. This is a generalization to more general fields of a result by Yagita (see \cite[Corollary 5.2]{Y}).

First, recall from \cite[Section 1]{EKV} and \cite[Theorem 3.1.1]{SV} that in $H(B_{\acute{e}t}SO_n)$ there are Stiefel-Whitney classes, which we denote by $\widetilde{w_i}$, in bidegree $(i)[i]$ that are mapped to $\tau^{[(i+1)/2]}u_i$ by the homomorphism $H(B_{\acute{e}t}SO_n) \rightarrow H(BSO_n)$.

\begin{lem}\label{van}
	The homomorphism $H(B_{\acute{e}t}SO_n) \rightarrow H(B_{\acute{e}t}Spin_n)$ maps $\widetilde{w_2}$ to $0$.
\end{lem}
\begin{proof}
	It immediately follows from \cite[Theorem 1.14]{EKV}. 
\end{proof}

Note, however, that $c_2$ is not always mapped to $0$ in $H(B_{\acute{e}t}Spin_n)$ as the computations of $Ch(B_{\acute{e}t}Spin_7)$ in \cite{G} and of $Ch(B_{\acute{e}t}Spin_8)$ in \cite{MR} show. This implies that $c_2$ is non zero in $Ch(B_{\acute{e}t}Spin_n)$ for all $n \geq 7$ just by looking at the homomorphisms $Ch(B_{\acute{e}t}Spin_n) \rightarrow Ch(B_{\acute{e}t}Spin_{n-1})$ that send $c_i$ to $c_i$ for all $i \leq n-1$.

In the following lemma we report some formulas holding in $H(BSO_n)$ involving the action of the Milnor operations $Q_i$ on $u_2$. These formulas have formally identical analogues in topology and we present a proof just for completeness.

Before proceeding recall from \cite[Corollary 4]{K} that in our case ($\rho=0$) the Milnor operations can be defined (as in topology) inductively by:
$$Q_0=Sq^1;$$
$$Q_i=Sq^{2^i}Q_{i-1}+Q_{i-1}Sq^{2^i}.$$

\begin{lem}
	In $H(BSO_n)$ for any $i \geq 1$ we have that:\\
	1) $\theta_i=Q_{i-1}u_2$;\\
	2) $Sq^1\theta_{i+1}=\theta_i^2$.
\end{lem}
\begin{proof}
We proceed by induction. For 1), we know that $\theta_1=Sq^1u_2=Q_0u_2$ by definition. Now, suppose $\theta_i=Q_{i-1}u_2$, then
$$Q_iu_2=Sq^{2^i}Q_{i-1}u_2+Q_{i-1}Sq^{2^i}u_2=Sq^{2^i}\theta_i=\theta_{i+1}$$
since for $i=1$ one has that $Q_0Sq^2u_2=Sq^1(u_2^2)=0$ while for $i>1$ the triviality of $Sq^{2^i}u_2$ follows from Wu formula.

For 2), we just need to prove that $Q_iu_3=Sq^1Q_iu_2=\theta_i^2$ for $i \geq 1$. If $i=1$, then 
$$Q_1u_3=Sq^2Sq^1u_3+Sq^1Sq^2u_3=Sq^3u_3=u_3^2=\theta_1^2.$$

Suppose $Q_{i-1}u_3=\theta_{i-1}^2$. Therefore, by Cartan formula
$$Q_iu_3=Sq^{2^i}Q_{i-1}u_3+Q_{i-1}Sq^{2^i}u_3=Sq^{2^i}(\theta_{i-1}^2)=(Sq^{2^{i-1}}\theta_{i-1})^2=\theta_i^2$$
since for $i \geq 2$ we have that $Sq^{2^i}u_3=0$ by Wu formula, which completes the proof.	
\end{proof}

\begin{rem}\label{rem3}
	\normalfont
Note that the element $\tau \theta_i^2$ lives in the Chern subring (see \cite[Section 5]{MRV})
$$Chern(B_{\acute{e}t}SO_n) \cong Chern(BSO_n) \cong \Z/2[c_2,\dots,c_n]$$
of $H(BSO_n)$ for any $i \geq 1$. Moreover, by Lemma \ref{van} and since $\widetilde{w_2}$ maps to $\tau u_2$ via the homomorphism $H(B_{\acute{e}t}SO_n) \rightarrow H(BSO_n) $, we deduce that $\tau \theta_i^2=\tau Sq^1\theta_{i+1}=Sq^1Sq^{2^i} \cdots Sq^1 \widetilde{w_2}$ vanishes in $Chern(B_{\acute{e}t}Spin_n)$ for all $i \geq 1$.
\end{rem}

\begin{prop}
	There exists a ring isomorphism
	$$Chern(B_{\acute{e}t}Spin_n) \cong \Z/2[c_2,\dots,c_n]/{\mathcal I}_n$$
	where the ideal ${\mathcal I}_n$ satisfies the following chain of inclusions
	$$(\tau \theta_1^2,\dots,\tau \theta_{k(n)-1}^2) \subseteq {\mathcal I}_n \subseteq \iota^{-1}(I_{k(n)})$$
	and $\iota: \Z/2[c_2,\dots,c_n] \rightarrow H(BSO_n)$ is the inclusion of the Chern subring of $H(BSO_n)$.
\end{prop}
\begin{proof}
	The ideal ${\mathcal I}_n$ is just the kernel of the epimorphism $\Z/2[c_2,\dots,c_n] \rightarrow Chern(B_{\acute{e}t}Spin_n)$. Then, the first inclusion of the chain is justified by Remark \ref{rem3}.
	
	The second inclusion follows from the fact that the epimorphism $\Z/2[c_2,\dots,c_n] \rightarrow Chern(BSpin_n)$ factors through $Chern(B_{\acute{e}t}Spin_n)$ and by Theorem \ref{MQ2}.
\end{proof}

One can easily see that passing to the radicals in the chain of inclusions above gives
$$\sqrt{(\tau \theta_1^2,\dots,\tau \theta_{k(n)-1}^2)} \subseteq \sqrt{ {\mathcal I}_n} \subseteq \sqrt{(\theta_0^2,\tau \theta_1^2,\dots,\tau \theta_{k(n)-1}^2)}$$
which implies that, modulo nilpotent elements, there exists the following composition of epimorphisms
$$\Z/2[c_2,\dots,c_n]/\sqrt{(\tau \theta_1^2,\dots,\tau \theta_{k(n)-1}^2)} \rightarrow Chern(B_{\acute{e}t}Spin_n)_{red} \rightarrow \Z/2[c_2,\dots,c_n]/\sqrt{(\theta_0^2,\tau \theta_1^2,\dots,\tau \theta_{k(n)-1}^2)}.$$

As we have already mentioned, the previous result is analogous to \cite[Corollary 5.2]{Y} (which is stated over complex numbers) but it is valid more generally without further restriction on the base field (provided that $\rho=0$). This suggests that studying Nisnevich classifying spaces could also be useful for the understanding of the Chow ring of \'etale classifying spaces over more general fields where one usually lacks topological insights. 
	
\footnotesize{

\noindent {\scshape Mathematisches Institut, Ludwig-Maximilians-Universit\"at M\"unchen}\\
fabio.tanania@gmail.com
	
\end{document}